\numberwithin{equation}{section}
\theoremstyle{plain}
\newtheorem{theorem}{Theorem}[section]
\newtheorem{proposition}[theorem]{Proposition}
\newtheorem{corollary}[theorem]{Corollary}
\newtheorem{lemma}[theorem]{Lemma}
\newtheorem{hypothesis}[theorem]{Hypothesis}
\theoremstyle{definition}
\newtheorem{definition}[theorem]{Definition}
\newtheorem{example}[theorem]{Example}
\theoremstyle{remark}
\newtheorem*{remark}{Remark}
\begin{document}
	\title[Invariant Part of Class Groups and Distribution of Relative Class Group]{Invariant Part of Class Groups and Distribution of Relative Class Group}
	\date{}
	\author{Weitong Wang}
	\address{College of Mathematical Sciences, Shaw Hall of Harbin Engineering University, No. 145, Nantong Street, Nangang District, Harbin, Heilongjiang Province, 150001 China}
	\email{weitongwang@hrbeu.edu.cn}
	\begin{abstract}
		We generalize the work of Roquette and Zassenhaus on the invariant part of the class groups to the relative class groups.
		Thus, we can show some statistical results as follows.
		For abelian extensions over a fixed number field $K$, we show infinite $C_p$-moments for the Sylow $p$-subgroup of the relative class group when $p$ divides the degree of the extension.
		For sextic number fields with $A_4$-closure, we can show infinite $C_2$-moments for the Sylow $2$-subgroup of the relative class group when the extensions run over a fixed Galois cubic field.
	\end{abstract}
\keywords{class groups, Cohen-Lenstra-Martinet heuristics, distribution of class groups}
	\maketitle
	\tableofcontents
\section{Introduction}\label{section: intro}
In this paper, we are mainly interested in the distribution of (relative) class groups of number fields.
Let us use an example to explain the notion briefly.
Let $\mathcal{E}$ be the set of quadratic number fields ordered by the absolute discriminant $d$.
Define
\begin{equation*}
	N_{d}(\mathcal{E},X)=\#\{K\in\mathcal{E}\mid d_K<X\},
\end{equation*}
which is the function that counts quadratic number fields ordered by discriminant.
Then we can define the notations of probability and moments for class groups.
Let $A$ be a finite abelian group, and $p$ be a rational prime.
Define the $p$-rank of $A$, denoted by $\operatorname{rk}_pA$, as the largest number $r$ so that there exists some injective group homomorphism $C_p^r\to A$.
For each non-negative integer $r$, define
\begin{equation*}
	\begin{aligned}
		\mathbb{P}_{d,\mathcal{E}}(\operatorname{Cl}_K\cong A):=
		&\lim_{X\to\infty}
		\frac{
			\#\{K\in\mathcal{E}\mid d_K<X\text{ and }\operatorname{Cl}_K\cong A\}
		}
		{
			N_{d}(\mathcal{E},X)
		}
		\\
		\mathbb{P}_{d,\mathcal{E}}(\operatorname{rk}_p\operatorname{Cl}_K\leq r):=
		&\lim_{X\to\infty}
		\frac{
			\#\{K\in\mathcal{E}\mid d_K<X\text{ and }\operatorname{rk}_p\operatorname{Cl}_K\leq r\}
		}
		{
			N_{d}(\mathcal{E},X)
		},
	\end{aligned}	
\end{equation*}
and call it the probability of $\operatorname{Cl}_K\cong A$, resp. $\operatorname{rk}_p\operatorname{Cl}_K\leq r$.
Define the $A$-moment of $\operatorname{Cl}_K$ to be
\begin{equation*}
	\mathbb{E}_{d,\mathcal{E}}(\lvert\operatorname{Hom}(\operatorname{Cl}_K,A)\rvert):=
	\lim_{X\to\infty}
	\frac{
		\sum_{
			\substack{
				K\in\mathcal{E}\\
				d_K<X
			}
		}
		\lvert\operatorname{Hom}(\operatorname{Cl}_{K},A)\rvert
	}{
		N_{d}(\mathcal{E},X)
	}.
\end{equation*}
When $p$ is an odd prime, Cohen and Lenstra~\cite{CL84} gives the prediction for the probability distribution of $\operatorname{Cl}_K\otimes\mathbb{Z}_p\cong A$, where $A$ is a finite abelian $p$-group.
When the set of fields $\mathcal{E}$ is generalized to the set of Galois $G$-fields, where $G$ is a finite group, Cohen and Martinet~\cite{CM90} gives the corresponding generalization of Cohen-Lenstra Heuristics.
The method of Cohen and Martinet could be applied to non-Galois cases and obtain the corresponding predictions.
See the author and Wood~\cite{wang2021moments}.
Though this area is widely open, there are some proven results.
A famous one is obtained by Davenport and Heilbronn~\cite{Davenport1971Cubic}.
In the context of distribution of class groups, we can translate their result into the following: 
for quadratic number fields, the $C_3$-moment of class groups is exactly what is predicted by Cohen-Lenstra-Martinet Heuristics.
Another result is the work of Alex Smith~\cite{smith2022distribution}, where he proves that the distribution of $\operatorname{Cl}_K\otimes\mathbb{Z}_2$ satisfies the Gerth's Conjecture~\cite{Gerth1987} when $K$ runs over quadratic number fields.

We discuss in detail the case where $p=2$ and $K$ quadratic.
If we apply Genus Theory (see Ishida~\cite{ishida1976genus} for example) to quadratic number fields, then
\begin{equation*}
	\omega(d_K)-1\leq\operatorname{rk}_2\operatorname{Cl}_K\leq\omega(d_K),
\end{equation*}
where $\omega(n)$ counts all the distinct prime factors of an integer $n$.
This implies that for each non-negative integer $r$, we have that
\begin{equation*}
	\mathbb{P}_{d,\mathcal{E}}(\operatorname{rk}_2\operatorname{Cl}_K\leq r)=0,
\end{equation*}
and
\begin{equation*}
	\mathbb{E}_{d,\mathcal{E}}(\lvert\operatorname{Hom}(\operatorname{Cl}_K,C_2)\rvert)=+\infty.
\end{equation*}
We can call this phenomenon ``zero-probability'', resp. ``infinite moment'' in short.
This means that the distribution of $\operatorname{Cl}_K\otimes\mathbb{Z}_2$ is qualitatively different from other Sylow $p$-subgroups of $\operatorname{Cl}_K$.
In particular, the original Cohen-Lenstra Heuristics \emph{cannot} be applied to this case (there is literally no prediction from the heuristics).
This is a motivation for Gerth's Conjecture.
And we will follow this phenomenon in this paper.

Let us introduce some notations so that we can make statements clearly.
\begin{definition}\label{def:set of primes}
    Fix a number field $K$.
    Denote by $\mathcal{P}_K$ the set of all finite primes of $K$.
    When $K=\mathbb{Q}$, let $\mathcal{P}=\mathcal{P}_{\mathbb{Q}}$ be the set of all finite rational primes.
    Given a set $\mathfrak{R}$ of ideals of $K$, define $\mathcal{P}_{\mathfrak{R}}$ as the set of all primes in $\mathfrak{R}$.
\end{definition} 
An example of $\mathfrak{R}$ is the set given by an ideal class of $\operatorname{Cl}_K$.
Since there are multiple ways to describe field extensions, we give the following two definitions to make terms like ``the set of all non-Galois cubic number fields'' precise. 
\begin{definition}\label{def:Gamma-fields}
    Let $G$ be a finite group.
    Fix a number field $k$.
    Define a \emph{$G$-extension of $k$} to be an isomorphism class of pairs $(K,\psi)$, where $K$ is a Galois extension of $k$, and $\psi:G(K/k)\cong G$ is an isomorphism.  
    An isomorphism of pairs $(\tau,m_\tau):(K,\psi)\to(K',\psi')$ is defined to be an isomorphism $\tau: K\to K'$ such that the map $m_\tau: G(K/k)\to G(K'/k)$ sending $g$ to $\tau\circ g\circ\tau^{-1}$ satisfies $\psi'\circ m_\tau=\psi$.  
    We sometimes leave the $\psi$ implicit, but this is always what we mean by a $G$-extension.  
    The $G$-extensions of $\mathbb{Q}$ are also called \emph{$G$-fields}.
\end{definition}
\begin{definition}\label{def:set of fields}
    A transitive permutation group is defined to be a subgroup $G$ of $S_n$ such that $G$ acts on $\{1,2,\dots,n\}$ transitively.
    Let $G$ be a transitive permutation group.
    Fix a number field $k$.
    Define ${\mathcal{E}}(G;k)$ to be the isomorphic classes of pairs $(K,\psi)$ such that the Galois closure $(\hat{K},\psi)$ is a $G$-field and $K=\hat{K}^{G_1}$, where $G_1=\operatorname{Stab}_G(1)$ is the stabilizer of $1$.
    If the base field $k=\mathbb{Q}$, then we just omit it and write ${\mathcal{E}}(G):={\mathcal{E}}(G,\mathbb{Q})$.
\end{definition}
There are alternative ways to define the set of fields.
See Wood~\cite[p.292]{directions2016} for their differences and connections.
Note that if $G$ is abelian, and we view $G$ as a transitive permutation group by its operation on itself, then $\mathcal{E}(G;k)$ simply means the set of abelian $G$-extensions over $k$.
Let us give the notation of counting number fields.
\begin{definition}\label{def:counting number fields}
    Let $S$ be a countable set with a counting function $C:S\to\mathbb{R}_{\geq0}$ such that for each $X>0$ the set $\{a\in S\mid C(a)<X\}$ is finite.
    Define
	\begin{equation*}
		N_{C}(S,X):=\#\{a\in S\mid C(a)<X\}.
	\end{equation*}
\end{definition}
For now, we have seen that the (absolute) discriminant could work as a counting function.
However, in some cases, ordering fields by discriminant will contradict what is predicted by the heuristics.
See Cohen and Martinet~\cite{Cohen1994HeuristicsOC}, Bartel and Lenstra~\cite{bartel2020class} for example.
See also Wood~\cite{wood2010probabilities} for some discussions on different orderings from a field-counting point of view.
The choice of the counting function may affect the result of field-counting in a nontrivial way.
But we are not going to discuss it in detail here.
For abelian fields, we simply use the product of ramified primes as our counting function.
For more general cases, see Definition~\ref{def: product of ramified primes}.

When $p\mid\lvert G\rvert$, the work of the author~\cite{wang2022distributionOT} shows the following result for abelian fields.
\begin{theorem}\label{thm: S1 abelian case relative class groups}
    Let $G$ be a finite abelian group with a subgroup $H$.
    Define $\mathcal{E}:=\mathcal{E}(G)$.
    For each $K\in\mathcal{E}$, define $C:=\sqrt{(d_K)}$ where $\sqrt{}$ means the radical, i.e., the product of ramified primes.
    Let $q$ be a prime number such that $p\mid\lvert G/H\rvert$, and $\Omega:=\{g\in G\mid r_g\equiv0\bmod{p}\}$, where $r_g$ is the order of $g$.
    We have that
	\begin{equation*}
		\mathbb{P}_{C,\mathcal{E}}(\operatorname{rk}_p\operatorname{Cl}(K/K^H)\leq r)=0
		\quad\text{and}\quad
		\mathbb{E}_{C,\mathcal{E}}(\lvert\operatorname{Hom}(\operatorname{Cl}(K/K^H),C_p)\rvert)=+\infty,
	\end{equation*}
	where the notation $\operatorname{Cl}(K/K^H)$ means the relative class group.
\end{theorem}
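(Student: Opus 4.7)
The plan is to produce a lower bound for $\operatorname{rk}_q\operatorname{Cl}(K/K^H)$ in terms of the number of rational primes whose inertia in $K/\mathbb{Q}$ lies in $\Omega$, and then to verify that under the $P$-ordering the expected number of such primes grows without bound, yielding both the infinite $C_q$-moment and the vanishing probability.

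First I would exploit the hypothesis $q\mid |G/H|$ together with the abelianness of $G$ to fix a surjective character $\chi:G/H\twoheadrightarrow C_q$, whose kernel pulls back to an index-$q$ subgroup $N\supset H$ of $G$. The field $K^N$ is then a $C_q$-subextension of $K/K^H$, and via class field theory, $C_q$-valued homomorphisms from $\operatorname{Cl}(K/K^H)$ can be constructed from unramified abelian extensions of $K$ compatible with $\chi$ whose relative norm to $K^H$ is trivial; this is essentially the relative version of the Roquette--Zassenhaus theorem advertised in the abstract and already set up in the paper's main technical results.

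Next I would apply a genus-theoretic argument to establish an inequality of the form
\[
\operatorname{rk}_q\operatorname{Cl}(K/K^H)\;\geq\;\omega_\Omega(K)-c_{G,H},
\]
where $\omega_\Omega(K)$ counts rational primes ramified in $K$ whose inertia generator lies in $\Omega$ and $c_{G,H}$ depends only on $G$ and $H$. Concretely, for each such prime $\ell$ the character $\chi$ pairs nontrivially with its inertia, producing an independent $C_q$-factor in a genus subgroup that lies inside the relative class group precisely because $\chi$ is trivial on $H$. Then I would carry out the counting: parameterize abelian $G$-extensions of $\mathbb{Q}$ via their character groups using Kronecker--Weber, and for any finite set $S$ of auxiliary rational primes whose Frobenius data force $\Omega$-inertia, show that the proportion of $K\in\mathcal{E}(G)$ with $P(K)<X$ having every $\ell\in S$ ramified with $\Omega$-inertia is bounded below, uniformly in $X$, by a positive constant depending only on $S$. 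Combined with the genus bound this gives
\[
\mathbb{E}_{P,\mathcal{E}}\bigl(\lvert\operatorname{Hom}(\operatorname{Cl}(K/K^H),C_q)\rvert\bigr)\;\gg\;q^{\lvert S\rvert-c_{G,H}},
\]
which tends to infinity as $\lvert S\rvert\to\infty$; the same estimate, applied in the contrapositive, bounds the proportion of fields with $\operatorname{rk}_q\operatorname{Cl}(K/K^H)\leq r$ by $q^{r+c_{G,H}-\lvert S\rvert}$ up to constants, giving probability zero for each fixed $r$.

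The main obstacle is the genus-theoretic lower bound in the second step: one must verify that each additional $\Omega$-ramified prime contributes a fresh $C_q$ specifically to the \emph{relative} class group and not merely to $\operatorname{Cl}_K$. The hypothesis $q\mid|G/H|$ is essential here because it guarantees that the relevant genus characters factor through $G/H$, and hence descend to $\operatorname{Cl}(K/K^H)$ rather than being absorbed by the norm map to $\operatorname{Cl}_{K^H}$. The counting step is then a standard density calculation for abelian extensions under the $P$-ordering, where conductor-discriminant controls $P(K)$ in terms of the set of ramified primes in a way that makes the Frobenius conditions effectively independent.
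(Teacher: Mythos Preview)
First, note that the paper does not itself prove this statement: it is quoted from the author's earlier work \cite{wang2022distributionOT} as motivation. The present paper instead proves the related Theorem~\ref{thm: dist of rel class group abelian}, where the base field is an arbitrary fixed $K$ and one studies $\operatorname{Cl}(L/K)$ for $L$ ranging over abelian $G$-extensions of $K$. So there is no ``paper's own proof'' to compare against directly, but the machinery of Sections~\ref{section: estimate of class group} and~\ref{section: abelian} is clearly the intended toolkit.

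Your overall two-step architecture (an algebraic rank lower bound in terms of $\Omega$-ramified primes, then a counting argument) matches that machinery. However, both steps as you have written them have problems.

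In the first step your Galois correspondence is inverted: if $N\supset H$ has index $q$ in $G$, then $K^N\subseteq K^H$, so $K^N$ is a $C_q$-extension of $\mathbb{Q}$ sitting \emph{below} $K^H$, not a $C_q$-subextension of $K/K^H$. More substantively, the claim that the genus contributions land in $\operatorname{Cl}(K/K^H)$ ``precisely because $\chi$ is trivial on $H$'' needs a concrete mechanism; the paper's Corollary~\ref{cor: estimate of rel class group} shows what is actually required, namely that one restricts to primes of the base whose ideal class is trivial, so that the invariant ideals $\mathfrak{a}(\mathfrak{p})$ have principal norm.

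The counting step has a genuine gap. For a fixed finite set $S$, the proportion $\delta_S$ of $K\in\mathcal{E}$ with every $\ell\in S$ carrying $\Omega$-inertia is positive, but under the $P$-ordering it is of size $\prod_{\ell\in S}O(1/\ell)$, since forcing $\ell$ to ramify costs a factor of $\ell$ in $P(K)$. Thus the implied constant in your bound $\mathbb{E}(\cdots)\gg q^{|S|-c_{G,H}}$ depends on $S$ and decays faster than $q^{|S|}$ grows, so letting $|S|\to\infty$ yields nothing. The ``contrapositive'' you invoke for the probability claim is also not a valid deduction: knowing that a proportion $\delta_S$ have rank $\geq|S|-c$ only bounds the small-rank proportion by $1-\delta_S$, not by $q^{r+c-|S|}$.

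The approach that does work, and which the paper carries out for Theorem~\ref{thm: dist of rel class group abelian}, is the complementary one: rather than lower-bounding the density of fields with many $\Omega$-ramified primes, one shows via Dirichlet series and the Tauberian Theorem~\ref{thm:Delange-Ikehara} that for each fixed $\gamma$ the fields with exactly $\gamma$ such primes (among a suitable class of principal primes) satisfy $N_C(\mathcal{E}_{q,\gamma}^{\mathfrak{m}},X)=o(N_C(\mathcal{E},X))$; see Proposition~\ref{prop: field-counting of abelian extensions in general}. This is exactly Hypothesis~\ref{hypothesis: comparison between field counting results}(2), and Theorem~\ref{thm: 0-prob and infty-moment} then delivers both the zero probability and the infinite moment.
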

And we can see that the zero probability of $\operatorname{rk}_p\operatorname{Cl}(K/K^H)\leq r$ and the infinite $C_p$-moment are qualitatively different from what is predicted from the Cohen-Lenstra-Martinet Heuristics for the case when $p\nmid\lvert G\rvert$, 
which generalizes the quadratic case.
On the other hand, for non-Galois fields, the method of the author~\cite{wang2022distributionOT} could not be applied directly.
Let us use an example to explain this.
\begin{definition}\label{def: e(p)}
	Let $L/K$ be an extension of the number fields.
	For each $\mathfrak{p}\in\mathcal{P}_K$, define
	\begin{equation*}
		e(\mathfrak{p},L/K):=\gcd(e(\mathfrak{P}/\mathfrak{p}))_{\mathfrak{P}\mid\mathfrak{p}},
	\end{equation*}
	where $\mathfrak{P}$ runs over primes of $L$ above $\mathfrak{p}$.
\end{definition}
The result of Roquette and Zassenhaus~\cite{RZ1969ClassRank} says the following.
\begin{theorem}\cite[Theorem 1]{RZ1969ClassRank}\label{thm:S1 invariant part}
    Let \(K\) be a number field of degree \(n\) over \(\mathbb{Q}\), and \(p\) be a finite rational prime.
    We have
	\[\operatorname{rk}_p\operatorname{Cl}_K\geq\#\{q\in\mathcal{P}\mid e(q,K/\mathbb{Q})\equiv0\bmod{p}\}-2(n-1).\]
\end{theorem}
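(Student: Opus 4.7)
My plan is to produce $|S|$ explicit $q$-torsion classes in $\operatorname{Cl}_K$ and to control the relations among them via a torsion computation on a multiplicative quotient. For each $p\in S:=\{p\mid q\mid e_{K/\mathbb Q}(p)\}$, since $q$ divides every $e_{\mathfrak P}$ for $\mathfrak P\mid p$, the ideal $\mathfrak a_p:=\prod_{\mathfrak P\mid p}\mathfrak P^{e_{\mathfrak P}/q}$ is a well-defined integral ideal and satisfies $\mathfrak a_p^q=p\mathscr O_K$, so $[\mathfrak a_p]\in\operatorname{Cl}_K[q]$. I will assemble these into an $\mathbb F_q$-linear map
\[\Phi\colon\mathbb F_q^S\longrightarrow\operatorname{Cl}_K[q],\qquad(n_p)\longmapsto\Big[\prod_{p\in S}\mathfrak a_p^{n_p}\Big],\]
and since $\operatorname{rk}_q\operatorname{Cl}_K\ge\dim_{\mathbb F_q}\operatorname{im}\Phi=|S|-\dim_{\mathbb F_q}\ker\Phi$, the theorem follows once I show $\dim_{\mathbb F_q}\ker\Phi\le 2(n-1)$.

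Next I will embed $\ker\Phi$ into $(K^\times/\mathbb Q^\times\mathscr O_K^\times)[q]$. For $(n_p)\in\ker\Phi$ choose $\beta\in K^\times$ with $(\beta)=\prod\mathfrak a_p^{n_p}$; raising to the $q$-th power gives $\beta^q=u\prod p^{n_p}$ for some $u\in\mathscr O_K^\times$, so $\beta^q\in\mathbb Q^\times\mathscr O_K^\times$. The assignment $(n_p)\mapsto\overline\beta$ is well defined (independent of the $\mathbb Z$-lift of $(n_p)$ and of $\beta$ modulo $\mathscr O_K^\times$); it is injective because if $\beta\in\mathbb Q^\times\mathscr O_K^\times$, say $\beta=ru'$ with $r\in\mathbb Q^\times,\,u'\in\mathscr O_K^\times$, then $(\beta)=r\mathscr O_K$, and matching $\mathfrak P$-adic valuations on both sides of $(\beta)=\prod\mathfrak a_p^{n_p}$ forces $n_p=qv_p(r)$ for every $p\in S$, i.e.\ $(n_p)=0$ in $\mathbb F_q^S$.

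The core step is then the bound $\dim_{\mathbb F_q}(K^\times/\mathbb Q^\times\mathscr O_K^\times)[q]\le 2(n-1)$. I will feed the short exact sequence
\[0\to\mathscr O_K^\times/\{\pm1\}\to K^\times/\mathbb Q^\times\to K^\times/\mathbb Q^\times\mathscr O_K^\times\to 0\]
into the $\operatorname{Tor}$-long exact sequence for $(-)\otimes\mathbb F_q$; combined with the identity $\dim_{\mathbb F_q}(A/q)-\dim_{\mathbb F_q}A[q]=\operatorname{rk}_{\mathbb Z}A$ for finitely generated $A$, this yields
\[\dim_{\mathbb F_q}(K^\times/\mathbb Q^\times\mathscr O_K^\times)[q]\le\dim_{\mathbb F_q}(K^\times/\mathbb Q^\times)[q]+\operatorname{rk}_{\mathbb Z}(\mathscr O_K^\times/\{\pm1\}).\]
By Dirichlet's unit theorem the second summand equals $r_1+r_2-1\le n-1$. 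For the first, if $\overline\gamma_1,\dots,\overline\gamma_k\in(K^\times/\mathbb Q^\times)[q]$ are $\mathbb F_q$-independent with $\gamma_i^q\in\mathbb Q^\times$, a Kummer compositum argument (passing through $\mathbb Q(\zeta_q)$ when $\zeta_q\notin\mathbb Q$ and descending) shows $[\mathbb Q(\gamma_1,\dots,\gamma_k):\mathbb Q]=q^k$, so $k\le\lfloor\log_q n\rfloor$; together with at most one dimension from $\mu_q(K)/\mu_q(\mathbb Q)$, this gives $\dim_{\mathbb F_q}(K^\times/\mathbb Q^\times)[q]\le \lfloor\log_q n\rfloor+1\le n-1$, using that $|S|>0$ forces $q\mid n$ and hence $n\ge q$.

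The principal technical hurdle I anticipate is the compositum-degree claim in the last step, namely that a maximal $\mathbb F_q$-independent system of degree-$q$ radicals in $K$ spans a subfield of degree exactly $q^k$ over $\mathbb Q$; this is routine via Kummer theory but needs a mild adjustment when $\zeta_q\notin K$. Once this is in hand, the rest of the argument is a formal consequence of the two short exact sequences above and the $\operatorname{Tor}$-sequence bookkeeping.
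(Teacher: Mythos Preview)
Your approach is genuinely different from the paper's, and mostly sound, but one step is shakier than you indicate.

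\textbf{Comparison with the paper.} The paper does not prove this statement directly; it is quoted from Roquette--Zassenhaus. What the paper does prove is the generalisation Theorem~\ref{thm: estimate of class group}, and its method is structurally different from yours: it passes to the Galois closure $N/\mathbb Q$, realises the relevant classes inside $D_K=I_K^G/P_K^G$, and bounds the error via $P_K^G/P_{\mathbb Q}\hookrightarrow P_N^G/P_{\mathbb Q}\cong H^1(G,\mathscr O_N^*)$, landing on a constant of size roughly $[N:\mathbb Q]^2$. Your argument avoids the Galois closure entirely and aims directly for the sharp constant $2(n-1)$; your embedding $\ker\Phi\hookrightarrow (K^\times/\mathbb Q^\times\mathscr O_K^\times)[q]$ is in fact an isomorphism onto $(P_K/P_{\mathbb Q})[q]$, and your Tor bound $\dim C[q]\le\dim B[q]+\operatorname{rk}_{\mathbb Z}A$ is correct. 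The trade-off is that the paper's cohomological bound generalises cleanly to arbitrary base fields, while your Kummer bound is tied to $\mathbb Q$.

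\textbf{The gap.} The weak point is the bound on $\dim_{\mathbb F_q}(K^\times/\mathbb Q^\times)[q]$. Your assertion that independent $\overline\gamma_1,\dots,\overline\gamma_k$ force $[\mathbb Q(\gamma_1,\dots,\gamma_k):\mathbb Q]=q^k$ is not literally true: for $q>2$ and $\zeta_q\in K$ one may take $\gamma_1=\zeta_q$, which contributes a dimension but generates a degree-$(q-1)$ extension. You flag this with the ``$+1$'' for $\mu_q(K)/\mu_q(\mathbb Q)$, but then your stated inequality $\lfloor\log_q n\rfloor+1\le n-1$ fails at $n=q=2$. The fix is easy: for $q=2$ the root-of-unity contribution vanishes (since $\mu_2(K)=\mu_2(\mathbb Q)$) and $\dim B[2]\le\lfloor\log_2 n\rfloor\le n-1$; for $q$ odd with $\zeta_q\in K$ one has $(q-1)\mid n$ as well as $q\mid n$, so $n\ge q(q-1)\ge 6$ and the inequality holds. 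You should also justify that independence of the $a_i=\gamma_i^q$ in $\mathbb Q^\times/(\mathbb Q^\times)^q$ survives base change to $\mathbb Q(\zeta_q)$ before invoking Kummer theory; this follows from $\gcd(q,[\mathbb Q(\zeta_q):\mathbb Q])=1$, but it is the step that actually needs the ``mild adjustment'' you allude to, not the descent afterwards.
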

This is the algebraic preliminary for results like Theorem~\ref{thm: S1 abelian case relative class groups}.
Also, this shows the connection between specific ramified primes and the $q$-rank of class groups.
However, let $G$ be the image of $A_4$ in $S_6$, and consider the set of fields $\mathcal{E}:=\mathcal{E}(G)$.
It means that $L\in\mathcal{E}$ if and only if $[L:\mathbb{Q}]=6$ and the Galois closure $\hat{L}$ is an $A_4$-field (or just sextic $A_4$-field for short).
One can check the ramification table of all tamely ramified primes in $L/\mathbb{Q}$ and obtain the following result:
\begin{proposition}
	For each sextic $A_4$-field $L$, we have
	\begin{equation*}
		\#\{p\nmid 6: e(p,L/\mathbb{Q})\equiv0\bmod{2}\}=0.
	\end{equation*}
\end{proposition}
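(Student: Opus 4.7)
The plan is to reduce the proposition to a short cycle-type computation on the $A_4$-set $A_4/H$ and then enumerate the three conjugacy classes of $A_4$.

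Since $[L:\mathbb{Q}]=6$, the subgroup $H\subset A_4$ defining $L=\hat{L}^{H}$ has order $2$, and the only order-$2$ subgroups of $A_4$ are generated by double transpositions; up to conjugation we may take $H=\langle (12)(34)\rangle$. A key observation is that the $A_4$-set $A_4/H$ is naturally isomorphic to the set of $2$-element subsets of $\{1,2,3,4\}$ with its natural $A_4$-action: the stabilizer of $\{1,2\}$ in $A_4$ equals $H$ (because the transposition $(12)$ itself is odd), and $A_4$ acts transitively on the six $2$-subsets.

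For any prime $p\nmid 6$ we have $p\nmid |A_4|$, so $p$ is tamely ramified in $\hat{L}/\mathbb{Q}$ and the inertia group $I_p\subseteq A_4$ at a prime above $p$ is cyclic. Thus $I_p=\langle\sigma\rangle$ for some $\sigma$ of one of the three conjugacy types of $A_4$: the identity, a double transposition, or a $3$-cycle. By the standard dictionary relating the splitting of $p$ in the non-Galois subfield $L=\hat{L}^{H}$ to orbit structure, the ramification indices $e_i$ of the primes of $L$ above $p$ are the common sizes of the $\langle\sigma\rangle$-orbits inside the decomposition-group orbits on $A_4/H$; in particular, every $e_i$ is one of the cycle lengths of $\sigma$ on $A_4/H$, and every fixed coset of $\sigma$ yields some $e_i=1$.

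The central step is the cycle computation on $2$-subsets: a double transposition $(ab)(cd)$ fixes $\{a,b\}$ and $\{c,d\}$ and swaps the pairs $\{a,c\}\leftrightarrow\{b,d\}$ and $\{a,d\}\leftrightarrow\{b,c\}$, giving cycle type $(1,1,2,2)$; a $3$-cycle $(abc)$ with $d$ fixed cycles $\{a,b\}\to\{b,c\}\to\{a,c\}$ and $\{a,d\}\to\{b,d\}\to\{c,d\}$, giving cycle type $(3,3)$; the identity gives $(1^{6})$. In every case either all cycle lengths equal $3$ (so every $e_i=3$) or $\sigma$ has a fixed coset (so some $e_i=1$); either way $\gcd(e_i)$ is odd. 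Hence no tame $p$ satisfies $2\mid e_{L/\mathbb{Q}}(p)$, and the set in the statement is empty. There is no real obstacle once the identification of $A_4/H$ with $2$-subsets of $\{1,2,3,4\}$ is in hand; the cycle computation is then immediate and the ramification-orbit dictionary is standard.
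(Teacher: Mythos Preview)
Your argument is correct and is essentially the same as the paper's: both compute the cycle type of each nontrivial conjugacy class of $A_4$ on the degree-$6$ coset space and observe that in every case either there is a fixed coset (forcing some $e_i=1$) or all cycles have length $3$, so $e_{L/\mathbb{Q}}(p)$ is odd for every tame $p$. The only cosmetic difference is that the paper works with an explicit embedding $A_4\hookrightarrow S_6$ (sending $(12)(34)\mapsto (1)(2)(34)(56)$, $(123)\mapsto(135)(246)$) and records the outcome in a ramification table, whereas you identify $A_4/H$ with the $2$-element subsets of $\{1,2,3,4\}$ and compute cycle types directly; both yield the types $(1,1,2,2)$ and $(3,3)$.
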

If we apply Theorem~\ref{thm:S1 invariant part} to a sextic $A_4$-field $L$, then it always says that 
\begin{equation*}
	\operatorname{rk}_2\operatorname{Cl}_L\geq-10,
\end{equation*}
which is trivial.
On the other hand, the structure of $G$ (or $A_4$) tells us that it is likely that there are some nontrivial subgroups of $\operatorname{Cl}_L[2^\infty]$ (the Sylow $2$-subgroup) associated to the ramified primes.
So we need some generalization of Theorem~\ref{thm:S1 invariant part} in the sense that the base field is not necessarily $\mathbb{Q}$.
Let us present it here as follows.
\begin{theorem}\label{thm: S1 estimate of class group}
    Fix a rational prime $p$.
    Let $L/K$ be a finite extension of number fields with Galois closure $N/K$.
    Denote $[N:K]$ by $n$.
    We have
	\[\operatorname{rk}_p\operatorname{Cl}_L\geq\#\{\mathfrak{p}\in\mathcal{P}_K\mid e(\mathfrak{p},L/K)\equiv0\bmod{p}\}-n^2.\]
\end{theorem}
Clearly if the base field $K=\mathbb{Q}$, then we go back to the case of Theorem~\ref{thm:S1 invariant part} with a coarser lower bound on the $p$-rank in general, thus it is indeed a version of generalization.
See Section~\ref{section: estimate of class group} for more details.
As for the distribution of class groups, we have the following.
\begin{theorem}\label{thm: S1 dist of rel class group abelian}
	Let $K$ be a fixed number field.
	For each finite Galois extension $L/K$, define $C(L/K):=\mathfrak{N}(\sqrt{\mathfrak{d}_{L/K}})$, 
    where $\mathfrak{d}$ is the relative discriminant, $\sqrt{}$ is the radical, and $\mathfrak{N}$ is the absolute norm.
	Let $G$ be a finite abelian group, and set $\mathcal{E}:=\mathcal{E}(G,K)$.
	For each $p\mid\lvert G\rvert$, and for each non-negative integer $r$, we have 
	\begin{equation*}
		\mathbb{P}_{C,\mathcal{E}}(\operatorname{rk}_p\operatorname{Cl}(L/K)\leq r)=0.
	\end{equation*}
	In addition, we have
	\begin{equation*}
		\mathbb{E}_{C,\mathcal{E}}(\lvert\operatorname{Hom}(\operatorname{Cl}(L/K),C_p)\rvert)=+\infty.
	\end{equation*}
\end{theorem}
See Section~\ref{section: abelian} for the proof.
For our ``prototype'', the sextic $A_4$-fields, we have the following.
\begin{theorem}\label{thm: S1 dist of class group sextic case}
	Let $\mathcal{E}$ be the set of sextic $A_4$-fields, and $d$ the absolute discriminant.
	For each non-negative integer $r$, we have that
	\begin{equation*}
		\mathbb{P}_{d,\mathcal{E}}(\operatorname{rk}_2\operatorname{Cl}_L\leq r)=0.
	\end{equation*}
	Moreover, we have
	\begin{equation*}
		\mathbb{E}_{d,\mathcal{E}}(\lvert\operatorname{Hom}(\operatorname{Cl}_L,C_2)\rvert)=+\infty.
	\end{equation*}
\end{theorem}
Similarly, for cubic fields with $S_3$-closure, we have the following result.
\begin{theorem}\label{thm: S1 dist of class group cubic case}
    Let $\mathcal{E}$ be the set of non-Galois cubic fields.
    For each $K_3\in\mathcal{E}$, let $K_6$ be its Galois closure with a unique quadratic subfield $K_2$.
    Fix an integer $n\geq2$.
    Define the counting function $C$ by
	\begin{equation*}
		C(K_3):=d_{K_3}d_{K_2}^n,
	\end{equation*}
	where $d$ is the absolute discriminant.
	For each non-negative integer $r$, we have
	\begin{equation*}
		\mathbb{P}_{C,\mathcal{E}}(\operatorname{rk}_3\operatorname{Cl}_{K_3}\leq r)=0.
	\end{equation*}
	Moreover, we have
	\begin{equation*}
		\mathbb{E}_{C,\mathcal{E}}(\lvert\operatorname{Hom}(\operatorname{Cl}_{K_3},C_3)\rvert)=+\infty.
	\end{equation*}
\end{theorem}
Note that for absolute discriminant, we have
\begin{equation*}
	d_{K_6}=d_{K_2}d_{K_3}^2=d_{K_2}^3\operatorname{Nm}_{K_2/\mathbb{Q}}(\mathfrak{d}_{K_6/K_2}),
\end{equation*}
where $\mathfrak{d}$ means the relative discriminant.
So the counting function $C$, compared to the discriminant, assigns different ``weights'' to $\mathfrak{d}_{K_6/K_2}$ and $\mathfrak{d}_{K_2/\mathbb{Q}}=d_{K_2}$,
and we could call such counting functions ``generalized discriminant''.
See Section~\ref{sec: general case} for more discussions.

\subsection{Outline}
In Section~\ref{section: estimate of class group}, we first show some nontrivial algebraic structure of the relative class group, including the proof of Theorem~\ref{thm: S1 estimate of class group}.
This is also the algebraic preliminary to prove Theorem~\ref{thm: S1 dist of rel class group abelian} and~\ref{thm: S1 dist of class group sextic case}.
In Section~\ref{sec: specifications} and~\ref{sec: generating series}, we prepare some technical notations and tools for the proof of the distribution of class groups.
They could be used to translate the arithmetic properties of the extensions of number fields into the information of a generating series together with its analytic properties.
Section~\ref{sec: specifications} focuses on defining local specifications and counting functions in an appropriate context,
and Section~\ref{sec: generating series} focuses on studying the analytic properties of certain series.
Then, we prove statistical results for abelian extensions, resp. for some special non-abelian fields, in Section~\ref{section: abelian}, resp. in Section~\ref{sec: general case}.
The key idea of the proof is the following.
We first construct a suitable generating series to estimate the result of counting fields with a special specification ordered by the counting function.
Then study its analytic properties, especially its analytic continuation and the pole behaviour.
By Tauberian Theorem~\cite[Appendix II Theorem I]{narkiewicz2014elementary}, we obtain the asymptotic behaviour.
Finally, algebraic results like Theorem~\ref{thm: S1 estimate of class group} together with the asymptotic behaviour of the field-counting will imply the desired ``zero-probability'' and ``infinite moment'' as stated in Theorem~\ref{thm: S1 dist of rel class group abelian}, \ref{thm: S1 dist of class group sextic case}, and~\ref{thm: S1 dist of class group cubic case}.

\section{Basic notations}\label{section:notation}
In this section we introduce some of the notations that will be used in the paper.
We use some standard notations coming from analytic number theory.
For example, write a complex number as $s=\sigma+it$.
Denote the Euler's phi function by $\phi(n)$.
Let $\omega(n)$ count the number of distinct prime divisors of $n$ and so on.

We also follow the notations of inequalities with unspecified constants from Iwaniec and Kowalski~\cite[Introduction, p.7]{iwaniec2004analytic}.
Let us just write down the ones that are important for us.
Let $X$ be some space (usually some region of $\mathbb{C}$ in our paper), and let $f,g$ be two complex functions defined on $X$.
Then $f(x)\ll g(x)$ for $x\in X$ means that $\lvert f(x)\rvert\leq Cg(x)$ for some constant $C\geq0$.
Any value of $C$ for which this holds is called an implied constant.
We use $f(x)\asymp g(x)$ for $x\in X$ if $f(x)\ll g(x)$ and $g(x)\ll f(x)$ both hold with possibly different implied constants.
We say that $f=o(g)$ as $x\to x_0$ if for any $\epsilon>0$ there exists some (unspecified) neighbourhood $U_\epsilon$ of $x_0$ such that $\lvert f(x)\rvert\leq\epsilon g(x)$ for $x\in U_\epsilon$.
Finally, $f\sim g$ as $x\to x_0$ if we can write $f=g+o(g)$.

\section{Estimate of class group}\label{section: estimate of class group}
In this section we give an estimate of class group following the idea of Roquette and Zassenhaus~\cite{RZ1969ClassRank}.
The most important goal of this section is to prove Theorem~\ref{thm: S1 estimate of class group} stated in Section~\ref{section: intro}.
Here we generalize it a little, so that it could also give an estimate for the higher torsion part of the class group.
Recall that $\mathcal{P}_K$ means the set of finite primes of $K$, and recall the notation $e(\mathfrak{p},L/K)$ from Definition~\ref{def: e(p)}.
\begin{theorem}\label{thm: estimate of class group}
    Fix a number field $K$, and let $L/K$ be a finite extension with its Galois closure $N/K$.
    There exists some constant $c$, depending on $[N:\mathbb{Q}]$, such that for each rational prime $p$, and for each non-negative integer $l$, we have 
	\begin{equation*}
		\operatorname{rk}_p p^{l}\operatorname{Cl}_L\geq\#\{\mathfrak{p}\in\mathcal{P}_K\mid e(\mathfrak{p},L/K)\equiv0\bmod{p^{l+1}}\}-c.	
	\end{equation*}
    One can take $c=[N:\mathbb{Q}]^2$.
\end{theorem}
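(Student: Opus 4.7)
The plan follows the template of Roquette--Zassenhaus~\cite{RZ1969ClassRank}, adapted to an arbitrary fixed base $K$ and to the higher-depth torsion $p^{l-1}\operatorname{Cl}_L$. Set $\mathcal{P}_l := \{\mathfrak{p} \in \mathcal{P}(K) : p^l \mid e_{L/K}(\mathfrak{p})\}$, and for each $\mathfrak{p} \in \mathcal{P}_l$ let $\mathfrak{A}_\mathfrak{p}$ be the unique integral ideal of $L$ with $\mathfrak{A}_\mathfrak{p}^{p^l} = \mathfrak{p}\mathscr{O}_L$. Its class $c_\mathfrak{p} := [\mathfrak{A}_\mathfrak{p}] \in \operatorname{Cl}_L$ satisfies $p^l c_\mathfrak{p} = \iota([\mathfrak{p}])$, where $\iota : \operatorname{Cl}_K \to \operatorname{Cl}_L$ denotes the extension-of-ideals map on classes. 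Modulo the finite subgroup $\iota(\operatorname{Cl}_K)$, the image $\bar c_\mathfrak{p}$ is $p^l$-torsion, so $p^{l-1}\bar c_\mathfrak{p}$ is $p$-torsion.

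I would then consider the $\mathbb{F}_p$-linear map
\[
\bar\Phi : \mathbb{F}_p^{\mathcal{P}_l} \to (\operatorname{Cl}_L/\iota(\operatorname{Cl}_K))[p], \qquad e_\mathfrak{p} \mapsto p^{l-1}\bar c_\mathfrak{p}.
\]
The surjection $p^{l-1}\operatorname{Cl}_L \twoheadrightarrow p^{l-1}(\operatorname{Cl}_L/\iota(\operatorname{Cl}_K))$ induced by the quotient, together with $\dim\operatorname{image}(\bar\Phi) \leq \operatorname{rk}_p p^{l-1}(\operatorname{Cl}_L/\iota(\operatorname{Cl}_K))$, reduces the theorem to bounding $\dim_{\mathbb{F}_p}\ker\bar\Phi \leq c$ for some $c = c([N:\mathbb{Q}])$. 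A relation $(n_\mathfrak{p}) \in \ker\bar\Phi$ translates to the existence of a fractional ideal $\mathfrak{b}$ of $K$ and $\alpha \in L^\times$ with $\prod_\mathfrak{p}\mathfrak{A}_\mathfrak{p}^{n_\mathfrak{p} p^{l-1}} = \mathfrak{b}\mathscr{O}_L \cdot (\alpha)$; raising to the $p$-th power and using $\mathfrak{A}_\mathfrak{p}^{p^l} = \mathfrak{p}\mathscr{O}_L$ forces $\alpha^p \in K^\times \cdot \mathscr{O}_L^\times$. The assignment $(n_\mathfrak{p}) \mapsto \bar\alpha \in L^\times/K^\times\mathscr{O}_L^\times$ is injective on $\ker\bar\Phi$: if $\bar\alpha = 0$, then $\prod_\mathfrak{p}\mathfrak{A}_\mathfrak{p}^{n_\mathfrak{p} p^{l-1}}$ is extended from $K$, and a valuation computation at any prime of $L$ above each $\mathfrak{p}\in\mathcal{P}_l$ forces $p \mid n_\mathfrak{p}$, hence $n_\mathfrak{p}=0$ in $\mathbb{F}_p$.

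The hard step is bounding the image of this injection inside $(L^\times/K^\times\mathscr{O}_L^\times)[p]$, which is infinite a priori (by the snake lemma it embeds into $\operatorname{PId}(K)/p\operatorname{PId}(K)$). The relevant image is cut out by additional Kummer-theoretic constraints: after choosing $\mathfrak{b}$ from a fixed set of coset representatives of $\operatorname{Cl}_K$, the prime support of $(\alpha)$ is controlled, and the class $\bar\alpha$ then lies in a finite subgroup of $\mathbb{F}_p$-rank at most $\operatorname{rk}_\mathbb{Z}\mathscr{O}_L^\times + \dim_{\mathbb{F}_p}\mu_{p^l}(L) + \operatorname{rk}_p\operatorname{Cl}_K$, each term uniformly bounded in $[N:\mathbb{Q}]$ (with $K$ fixed). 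Uniformity in $l$ is automatic: $\mathcal{P}_l\neq\emptyset$ forces $p^l \leq \max_\mathfrak{p} e_{L/K}(\mathfrak{p}) \leq [N:K]$, so only finitely many values of $l$ yield a non-vacuous estimate and the constant $c$ can be chosen independently of $l$.
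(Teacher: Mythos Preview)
Your route differs from the paper's and has a concrete gap at the step you yourself flag as hard. The paper never quotients by $\iota(\operatorname{Cl}_K)$; instead it works inside $I_L$ with the $G$-invariant ideals $I_L^G$, freely generated by the $\mathfrak{a}(\mathfrak{p})$ with $\mathfrak{a}(\mathfrak{p})^{e_{L/K}(\mathfrak{p})}=\mathfrak{p}\mathscr{O}_L$, sets $D_L:=I_L^G/P_L^G\subseteq\operatorname{Cl}_L$, and reads off the bound from the two exact sequences $1\to\operatorname{Cl}_K\to I_L^G/P_K\to I_L^G/I_K\to 1$ and $1\to P_L^G/P_K\to I_L^G/P_K\to D_L\to 1$. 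The first gives $\operatorname{rk}_p p^{l-1}(I_L^G/P_K)\geq\operatorname{rk}_p p^{l-1}(I_L^G/I_K)=\#\mathcal{P}_l$ immediately, so the whole obstruction is $\operatorname{rk}_p(P_L^G/P_K)$. The key identification is $P_L^G/P_K\hookrightarrow P_N^G/P_K\cong H^1(G,\mathscr{O}_N^\times)$, whose $p$-rank is at most $[N:\mathbb{Q}]^2$. This cohomological step replaces your Kummer-theoretic sketch, gives a clean bound depending only on $[N:\mathbb{Q}]$, and makes no appeal to $\operatorname{Cl}_K$ for the lower bound.

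The specific error in your argument is the claim that raising $\prod_\mathfrak{p}\mathfrak{A}_\mathfrak{p}^{n_\mathfrak{p}p^{l-1}}=\mathfrak{b}\mathscr{O}_L\cdot(\alpha)$ to the $p$-th power forces $\alpha^p\in K^\times\cdot\mathscr{O}_L^\times$. What you actually get is $(\alpha^p)=\mathfrak{c}\mathscr{O}_L$ with $\mathfrak{c}=(\prod\mathfrak{p}^{n_\mathfrak{p}})\mathfrak{b}^{-p}$ a fractional ideal of $K$; the conclusion you want requires $\mathfrak{c}$ to be principal \emph{in $K$}, whereas a priori $[\mathfrak{c}]$ only lies in the capitulation kernel $\ker(\iota:\operatorname{Cl}_K\to\operatorname{Cl}_L)$. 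The same capitulation ambiguity obstructs the well-definedness of your assignment $(n_\mathfrak{p})\mapsto\bar\alpha$: changing $\mathfrak{b}$ within its extended ideal class changes $\alpha$ by an element $\gamma\in L^\times$ with $(\gamma)$ extended from $K$, and such $\gamma$ need not lie in $K^\times\mathscr{O}_L^\times$. Both issues are repairable by absorbing further copies of $\operatorname{rk}_p\operatorname{Cl}_K$ into $c$, but then your constant depends on $K$ itself, not only on $[N:\mathbb{Q}]$ as the theorem asserts. The paper's $H^1(G,\mathscr{O}_N^\times)$ bound is precisely what packages the unit and capitulation obstructions together with the correct uniformity; once you observe that your $(\alpha)$ is automatically $G$-invariant (each $\mathfrak{A}_\mathfrak{p}$ and $\mathfrak{b}\mathscr{O}_L$ is), you are in fact computing inside $P_L^G/P_K$, and the cohomological route is the natural way to finish.
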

For our convenience, let us give an equivalent definition of $p$-rank for a finite abelian group, so that it also works for finitely generated abelian groups.
\begin{lemma}
	Let $A$ be a finite abelian group, then
	\begin{equation*}
		\operatorname{rk}_pA=\dim_{\mathbb{F}_p}A/pA.
	\end{equation*}
\end{lemma}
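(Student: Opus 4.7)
The plan is to apply the structure theorem for finite abelian groups and reduce both sides of the asserted equality to a summand-by-summand comparison. First, I would decompose $A\cong\bigoplus_{i=1}^n C_{d_i}$ as a finite direct sum of cyclic groups. Since the functors $A\mapsto A/pA$ and $A\mapsto A[p]$ (the $p$-torsion subgroup) both commute with finite direct sums, it suffices to check the cyclic case, where $C_d/pC_d$ and $C_d[p]$ are each isomorphic to $\mathbb{F}_p$ when $p\mid d$ and vanish otherwise. This yields
$$\dim_{\mathbb{F}_p} A/pA \;=\; \#\{\,i : p\mid d_i\,\} \;=\; \dim_{\mathbb{F}_p} A[p].$$

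Second, I would relate the rightmost quantity to $\operatorname{rk}_p A$ in the sense of the paper. Any homomorphism $C_p^r\to A$ factors through $A[p]$ because its image is annihilated by $p$, so an injection $C_p^r\hookrightarrow A$ exists if and only if $C_p^r$ embeds in the $\mathbb{F}_p$-vector space $A[p]$; the maximal such $r$ is therefore $\dim_{\mathbb{F}_p} A[p]$. Combining the two identities yields the lemma.

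I do not anticipate any serious obstacle: once the structure theorem is invoked, the rest is bookkeeping. The one subtlety I would flag is that the surrounding text presents this lemma as the springboard to a generalization to finitely generated abelian groups, whereas the identity as written would fail for $A=\mathbb{Z}$ (since $A[p]=0$ yet $A/pA\cong\mathbb{F}_p$). The natural reading is that for finitely generated $A$ one adopts $\operatorname{rk}_p A := \dim_{\mathbb{F}_p} A/pA$ as the \emph{definition}, with this lemma ensuring compatibility with the original definition on the subcategory of finite abelian groups.
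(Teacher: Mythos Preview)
Your proof is correct. The argument via the structure theorem and the intermediate object $A[p]$ is clean and leaves no gaps; your final remark about the finitely generated case is also exactly how the paper proceeds afterward.

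The paper takes a somewhat different route. Rather than invoking the structure theorem for finite abelian groups directly, it first localizes at $p$ to reduce to the case of a finite abelian $p$-group, then argues via Nakayama's lemma and the submodule structure of free $\mathbb{Z}_{(p)}$-modules: a basis of $A/pA$ lifts to a minimal generating set of $A$, and any elementary abelian $p$-subgroup of $A$ pulls back along a surjection $\mathbb{Z}_{(p)}^l\twoheadrightarrow A$ to a free submodule of rank at most $l$. Your approach is more elementary and transparent---the passage through $A[p]$ makes both inequalities visible at once---whereas the paper's approach, though slightly heavier, foreshadows the module-theoretic perspective (ranks behaving well in short exact sequences, Proposition~\ref{prop: inequalities of p-rank}) that is used immediately afterward. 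Either argument is entirely adequate here.
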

\begin{proof}
	Recall that for a finite abelian group $A$, its $p$-rank $\operatorname{rk}_pA$ is defined as the largest number $r$ such that there exists some injective homomorphism $C_p^r\to A$.
	
	Since the $p$-part $A_{(p)}=\mathbb{Z}_{(p)}\otimes A$ of $A$ is a direct summand, we see that if $C_p^{l}\to A$ is any group homomorphism, then its image is included in $A_{(p)}$.
	Moreover it is clear that
	\begin{equation*}
		\dim_{\mathbb{F}_p}A/pA=\dim_{\mathbb{F}_p}A_{(p)}/pA_{(p)}.
	\end{equation*}
	So we may assume without loss of generality that $A$ is a finite abelian $p$-group.
	According to the structure theorem of finite abelian group, we see that there exists some non-negative integers $l$ and $n_1\leq n_2\leq\cdots\leq n_l$ such that
	\begin{equation*}
		A\cong\prod_{i=1}^l\mathbb{Z}/p^{n_i}\mathbb{Z}.
	\end{equation*}
	Let $x_i$ be a generator of $\mathbb{Z}/p^{n_i}\mathbb{Z}$.
	Then $\{x_1,\dots,x_l\}$ gives a basis of $A/pA$.
	In other words,
	\begin{equation*}
		\dim_{\mathbb{F}_p}A/pA=l.
	\end{equation*}
	And the set $\{p^{n_i-1}x_i\}_{i=1}^l$ generates an elementary $p$-group in $A$ that is isomorphic to $C_p^l$.
	This shows that
	\begin{equation*}
		\operatorname{rk}_pA\geq l.
	\end{equation*}
	Conversely, let $F\cong\mathbb{Z}_{(p)}^l$ be the free $\mathbb{Z}_{(p)}$-module with the set of generators $\{e_1,\dots,e_l\}$.
	There is a surjective map $f:F\to A$ induced by $e_i\mapsto x_i$.
	If $B\cong C_p^{l'}$ is any elementary $p$-group included in $A$, then $f^{-1}(B)$ is a submodule of $F$.
	Since $\mathbb{Z}_{(p)}$ is a principal ideal domain, we know that $f^{-1}(B)$ is also free and its rank as a free module (or the size of a set of basis) is at most $l$.
	Therefore $B$ is generated by at most $l$ elements, i.e., $l'\leq l$.
	This shows that $\operatorname{rk}_pA\leq l$.
	And we are done.
\end{proof}
From now on, given any abelian group $A$, we define
\begin{equation*}
	\operatorname{rk}_p A=\dim_{\mathbb{F}_p}A/pA.
\end{equation*}
The most important property of this definition is the following.
\begin{proposition}\label{prop: inequalities of p-rank}
    Let $A$, $B$, and $C$ be abelian groups.
	\begin{enumerate}
		\item If $A\subseteq B$ then
		\begin{equation*}
			\operatorname{rk}_pA\leq\operatorname{rk}_pB.
		\end{equation*}
		\item If $B\to C$ is a surjective homomorphism, then
		\begin{equation*}
			\operatorname{rk}_pB\geq\operatorname{rk}_pC.
		\end{equation*}
		\item Given a short exact sequence of abelian groups
		\begin{equation*}
			0\to A\to B\to C\to0,
		\end{equation*}
		we have that
		\begin{equation*}
			\operatorname{rk}_p B\leq\operatorname{rk}_p A+\operatorname{rk}_p C.
		\end{equation*}
	\end{enumerate}	
\end{proposition}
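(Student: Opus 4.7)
The three inequalities follow from standard properties of the right-exact functor $-\otimes_{\mathbb{Z}}\mathbb{F}_p$, using the identification $\operatorname{rk}_p A=\dim_{\mathbb{F}_p}(A\otimes_{\mathbb{Z}}\mathbb{F}_p)$ built into the definition. Parts (2) and (3) are direct: a surjection $B\twoheadrightarrow C$ induces a surjection $B/pB\twoheadrightarrow C/pC$, giving (2); and tensoring the given short exact sequence with $\mathbb{F}_p$ produces the right-exact sequence $A/pA\to B/pB\to C/pC\to 0$, so $\dim_{\mathbb{F}_p}(B/pB)$ is at most the sum of the other two dimensions, giving (3).

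Part (1) is the delicate one, because $-\otimes_{\mathbb{Z}}\mathbb{F}_p$ is not left exact, so $A\subseteq B$ does not formally yield an inclusion $A/pA\hookrightarrow B/pB$. My plan is to complete the inclusion to a short exact sequence $0\to A\to B\to C\to 0$ with $C:=B/A$ and apply the long exact sequence of $\operatorname{Tor}_*^{\mathbb{Z}}(-,\mathbb{F}_p)$. Because $\mathbb{Z}$ is a PID, the sequence terminates at $\operatorname{Tor}_1$, which is naturally identified with the $p$-torsion functor, producing the six-term exact sequence
\[0\to A[p]\to B[p]\to C[p]\to A/pA\to B/pB\to C/pC\to 0.\]
Taking the Euler characteristic and rearranging gives
\[\operatorname{rk}_p A-\operatorname{rk}_p B=\bigl(\dim_{\mathbb{F}_p}A[p]-\dim_{\mathbb{F}_p}B[p]\bigr)+\bigl(\dim_{\mathbb{F}_p}C[p]-\dim_{\mathbb{F}_p}C/pC\bigr).\]
The first bracket is nonpositive because the inclusion $A\hookrightarrow B$ restricts to an injection on $p$-torsion; the second bracket equals $-r_{\mathbb{Q}}(C)$, where $r_{\mathbb{Q}}(C)$ denotes the free rank of the finitely generated abelian group $C$, and is therefore also nonpositive. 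Combining these two observations yields (1).

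The only obstacle is conceptual: recognising that formal right-exactness alone does not give (1), and locating the correct two inputs (injectivity on $p$-torsion, plus nonnegativity of the free rank of the quotient) needed to extract the inequality from the Euler characteristic of the $\operatorname{Tor}$ sequence. For the finite-abelian-group case that actually governs the class-group applications, one can alternatively bypass $\operatorname{Tor}$ by passing to $p$-primary components $A_{(p)}\subseteq B_{(p)}$ and invoking the elementary divisor decomposition, which is cleaner but less uniform.
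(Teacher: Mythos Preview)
Your arguments for (2) and (3) are exactly the paper's: tensor the short exact sequence with $\mathbb{F}_p$, use right-exactness, and read off the dimension inequalities from the resulting sequence $A/pA\to B/pB\to C/pC\to 0$.

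For (1) you take a genuinely different route. The paper splits $A=A_t\oplus A_f$ and $B=B_t\oplus B_f$ into torsion and torsion-free parts, then argues separately: on the torsion side it uses the ``largest $C_p^r$ that embeds'' characterisation, and on the free side it uses that a subgroup of a free $\mathbb{Z}$-module of rank $n$ has rank at most $n$. Your approach instead extracts the six-term $\operatorname{Tor}$ sequence
\[
0\to A[p]\to B[p]\to C[p]\to A/pA\to B/pB\to C/pC\to 0
\]
and reads off the inequality from its Euler characteristic, using that $A[p]\hookrightarrow B[p]$ and that $\dim_{\mathbb{F}_p}C[p]-\dim_{\mathbb{F}_p}C/pC=-r_{\mathbb{Q}}(C)\le 0$. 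This is correct and arguably cleaner: it avoids the mild sloppiness in the paper's claim that ``$A_f\subseteq B_f$'' (the free complements are not canonical, so one really means the induced injection $A/A_t\hookrightarrow B/B_t$), and it packages the two contributions into a single exact-sequence computation. The paper's approach, on the other hand, is more elementary and makes the two sources of the inequality (torsion rank and free rank) visible without invoking $\operatorname{Tor}$. Both arguments implicitly need $B$ finitely generated (otherwise $\mathbb{Z}\subset\mathbb{Q}$ is a counterexample to (1) with the $\dim A/pA$ definition), and you flag this by calling $C$ finitely generated; the paper is silent on the point but only ever applies the proposition to finitely generated groups.
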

The inequalities hold in the following sense.
Take (3) for example.
If $\operatorname{rk}_pA$ and $\operatorname{rk}_pC$ are both finite, then $\operatorname{rk}_pB$ must also be finite and less than $\operatorname{rk}_pA+\operatorname{rk}_pC$.
Else if $\operatorname{rk}_pB$ is infinite, then the rank $\operatorname{rk}_pA$ or $\operatorname{rk}_pC$ must also be infinite.
\begin{proof}
	(1):
	For an abelian group $A$, we have
	\begin{equation*}
		A=A_t\oplus A_f,
	\end{equation*}
	where $A_t$ is the torsion part and $A_f$ is the torsion-free part.
	Clearly if $A\subseteq B$, then $A_t\subseteq B_t$ and $A_f\subseteq B_f$.
	For the torsion-free part, this reduces to the property that $\mathbb{Z}$ is a principal ideal domain and the number of generators of $A_f$ cannot exceed the one of $B_f$.
	For the torsion part, we can use the equivalent definition that $\operatorname{rk}_pA_t$ is the largest number $r$ such that there exists an injective homomorphism $C_p^r\to A_t$.
	Composing with $A_t\to B_t$, we obtain an injective homomorphism $C_p^r\to B_t$, which implies that $\operatorname{rk}_pB_t\geq r$.
	And we are done.
	
	(2) and (3):
	Assume that we have a short exact sequence
	\begin{equation*}
		0\to A\to B\to C\to0.
	\end{equation*}	
	Tensoring with $\mathbb{Z}/p\mathbb{Z}$, we obtain the following exact sequence
	\begin{equation*}
		A/pA\to B/pB\to C/pC\to0.
	\end{equation*}
	Now that the abelian groups in the sequence are also $\mathbb{F}_p$-vector spaces, we see that the map $B/pB\to C/pC$ splits, and
	\begin{equation*}
		B/pB=C/pC\oplus\operatorname{im}(A/pA).
	\end{equation*}
	As vector spaces, we clearly have
	\begin{equation*}
		\dim_{\mathbb{F}_p}B/pB\geq\dim_{\mathbb{F}_p}C/pC\quad\text{and}\quad\dim_{\mathbb{F}_p}\operatorname{im}(A/pA)\leq\dim_{\mathbb{F}_p}A/pA.
	\end{equation*}
	And the result just follows.
\end{proof}
Let us make some notations.
Fix a number field $K$.
Denote its group of fractional ideals by $I_K$ and its group of principal ideals by $P_K$.
Let $N/K$ be a finite Galois extension with Galois group $G=G(N/K)$.
For any subextension $L/K$ of $N/K$, we can embed $I_L$ into $I_N$ via $L\to N$.
The subgroup $I_L$ of $I_N$ in general is not a $G$-submodule of $I_N$.
Still we can consider the subgroup of $I_L$ fixed by $G$, i.e., define
\begin{equation*}
	I_L^G:=I_L\cap I_N^G=\{I\in I_L\mid g\cdot I=I\text{ for all }g\in G\}
	\quad\text{and}\quad
	P_L^G:=P_L\cap I_N^G,
\end{equation*}
Let us call it $G$-\emph{invariant part} of $I_L$, resp. $P_L$.
Note that when $L=N$, or more generally if $L/K$ is Galois, then $I_L$, resp. $P_L$, is a $G$-module, and the so-called $G$-invariant part coincides with the usual notion defined by the Galois action.
Since we have a chain of extensions, let us denote the primes of $K$ by $v$, the primes of $L$ by $w$, and the primes of $N$ by $x$.
For each $v\in\mathcal{P}_K$, and for each $w\in\mathcal{P}_L$ such that $w\mid v$, define
\begin{equation*}
    n_{w/v}:=\frac{e(w/v)}{e(v,L/K)}.
\end{equation*}
Then define
\begin{equation}\label{eqn: a(p)}
    \mathfrak{a}_{v}:=\prod_{w\mid v}w^{n_{w/v}}\in I_L.
\end{equation}
Recall from Definition~\ref{def: e(p)} that $e(v,L/K)\mid e(w/v)$ for all $w\mid v$.
In other words, $\mathfrak{a}_{v}\in I_L$ is the integral ideal such that $\mathfrak{a}_{v}^{e(v,L/K)}=v$ in $I_L$.
\begin{lemma}\label{lemma: I(L)^G}
	The group $I_L^G$ is a free abelian group generated by 
	\begin{equation*}
		\{\mathfrak{a}_v\mid v\in\mathcal{P}_K\}.
	\end{equation*}
\end{lemma}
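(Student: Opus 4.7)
The plan is to work prime-by-prime of $K$, lifting everything to $I_N$ where the $G$-action is the usual permutation of primes. Fix a finite prime $\mathfrak{p}$ of $K$, write $\mathfrak{p}\mathscr{O}_L=\prod_{i=1}^n\mathfrak{P}_i^{e_i}$ with $e_i=e_{L/K}(\mathfrak{P}_i)$, and set $d=e_{L/K}(\mathfrak{p})=\gcd(e_1,\dots,e_n)$. Since $N/K$ is Galois, there is a common ramification index $e_N=e(\mathfrak{Q}/\mathfrak{p})$ for every prime $\mathfrak{Q}$ of $N$ above $\mathfrak{p}$, and for each $\mathfrak{P}_i$ we have $e(\mathfrak{Q}/\mathfrak{P}_i)=e_N/e_i$ whenever $\mathfrak{Q}\mid\mathfrak{P}_i$.

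First I would verify that $\mathfrak{a}(\mathfrak{p})\in I_L^G$. A direct computation in $I_N$ gives
\begin{equation*}
\mathfrak{a}(\mathfrak{p})\mathscr{O}_N
=\prod_{i}\mathfrak{P}_i^{e_i/d}\mathscr{O}_N
=\prod_i\prod_{\mathfrak{Q}\mid\mathfrak{P}_i}\mathfrak{Q}^{(e_i/d)(e_N/e_i)}
=\Bigl(\prod_{\mathfrak{Q}\mid\mathfrak{p}}\mathfrak{Q}\Bigr)^{e_N/d},
\end{equation*}
and $G$ permutes the $\mathfrak{Q}\mid\mathfrak{p}$ transitively, so this extension is $G$-invariant; hence $\mathfrak{a}(\mathfrak{p})\in I_L^G$.

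Next I would prove the converse. Take any $I\in I_L^G$ and write $I=\prod_{\mathfrak{P}}\mathfrak{P}^{a_\mathfrak{P}}$. Fixing $\mathfrak{p}$, the $\mathfrak{Q}$-exponent of $I\mathscr{O}_N$ for $\mathfrak{Q}\mid\mathfrak{P}_i$ equals $a_{\mathfrak{P}_i}\cdot(e_N/e_i)$. Since $G$ acts transitively on the primes of $N$ above $\mathfrak{p}$ and $I\mathscr{O}_N$ is fixed by $G$, this quantity must be independent of $i$; call the common value $c$. Then $a_{\mathfrak{P}_i}=c\,e_i/e_N$, and the integrality of $a_{\mathfrak{P}_i}$ for every $i$ forces $e_N/e_i\mid c$ for all $i$, i.e.\ $\operatorname{lcm}_i(e_N/e_i)=e_N/d$ divides $c$. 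Writing $c=(e_N/d)m$ gives $a_{\mathfrak{P}_i}=m\,e_i/d$, so the $\mathfrak{p}$-part of $I$ equals $\mathfrak{a}(\mathfrak{p})^{m}$. Doing this for every $\mathfrak{p}$ with nonzero contribution exhibits $I$ as an integer combination of the $\mathfrak{a}(\mathfrak{p})$'s.

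Finally, freeness is immediate: the supports of $\mathfrak{a}(\mathfrak{p})$ and $\mathfrak{a}(\mathfrak{p}')$ are disjoint subsets of the primes of $L$ whenever $\mathfrak{p}\ne\mathfrak{p}'$, and each $\mathfrak{a}(\mathfrak{p})$ has nontrivial exponent at every $\mathfrak{P}_i\mid\mathfrak{p}$, so the generators are $\mathbb{Z}$-linearly independent inside the free abelian group $I_L=\bigoplus_{\mathfrak{P}}\mathbb{Z}\cdot\mathfrak{P}$. The only delicate point in the argument is the integrality step $e_N/d\mid c$, where one must unwind the definition $d=\gcd(e_i)$ to see that the common transitivity constant $c$ is forced to be a multiple of exactly $e_N/d$, not of something coarser; otherwise the generating set would be larger than claimed.
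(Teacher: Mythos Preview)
Your proof is correct and follows essentially the same route as the paper: reduce to a single prime $\mathfrak{p}$ of $K$, lift to $I_N$, and use that $G$ permutes the primes of $N$ above $\mathfrak{p}$ transitively with a common ramification index $e_N$ to force all the exponents to line up. The paper phrases the constancy step by writing the $L$-exponents as $e_i a_i$ with $a_i\in\mathbb{Q}$ and concluding $a_i=a_j$, whereas you keep integer exponents and make the integrality step explicit via the identity $\operatorname{lcm}_i(e_N/e_i)=e_N/\gcd_i(e_i)$; this is a cosmetic difference, and your version is arguably cleaner at exactly the point the paper glosses over.
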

\begin{proof}
	This is a generalization of Roquette and Zassenhaus~\cite[(6)]{RZ1969ClassRank} to the case when the base field is changed from $\mathbb{Q}$ to $K$.
	First of all, for any (finite) prime $v$ of $K$, the ideal $\mathfrak{a}_{v}$ (see~(\ref{eqn: a(p)})) viewed as an element of $I_N$ could be decomposed by prime ideals of $N$ as follows:
	\begin{equation*}
		\mathfrak{a}_{v}=
		\prod_{
			\substack{
				w\in\mathcal{P}_L\\
				w\mid v
			}
		}
		\prod_{
			\substack{
				x\in\mathcal{P}_N\\
				x\mid w
			}
		}x^{e(x/w)n_{w/v}}.
	\end{equation*}
	Note that $N/K$ is Galois.
        For each $x\mid w$, we have 
	\begin{equation*}
		e(x/w)e(w/v)=e(x/v)=e(v,N/K).
	\end{equation*}
	Therefore $e(x/w)n_{w/v}=e(x/v)e(v,L/K)^{-1}$, and we have
	\begin{equation*}
		\mathfrak{a}_v=\prod_{x\mid v}x^{e(v,N/K)e(v,L/K)^{-1}}.
	\end{equation*} 
	It is fixed by the action of $G$, for all the exponents are the same.
	This shows that $\mathfrak{a}_v\in I_L^G$ for each $v\in\mathcal{P}_K$.
	
	Note that $\mathfrak{a}_v\mid v$.
	If $v\neq v'$, then clearly $\mathfrak{a}_v$ and $\mathfrak{a}_{v'}$ are coprime, hence linearly independent in $I_L$.
	This shows that $\{\mathfrak{a}_v\}_{v\in\mathcal{P}_K}$ is a set of linearly independent ideals in $I_L$.
	
	Finally, if $\mathfrak{b}\in I_L^G$, and $\mathfrak{b}=w_1^{l_1}\dots w_n^{l_n}$ is the decomposition of $\mathfrak{b}$ in $L$, then all the conjugates of $w_1$ must show up in the decomposition of $\mathfrak{b}$.
	So we can assume without loss of generality that $w_1,\dots,w_n$ all lie above a fixed prime $v$ of $K$.
	Again, let us decompose $\mathfrak{b}$ in $N$:
	\begin{equation*}
		\mathfrak{b}=\prod_{i=1}^{n}\prod_{x\mid w_i}x^{e(x/w_i)l_i}.
	\end{equation*}
	Since all the primes $x$ of $N$ above $v$ are conjugate to each other, we see that if $x_1\mid w_i$ and $x_2\mid w_j$, then
	\begin{equation*}
		\begin{aligned}
			e(x_1/w_i)l_i=&e(x_2/w_j)l_j
			\\
			\Rightarrow\frac{e(v,N/K)}{e(w_i/v)}l_i=&\frac{e(v,N/K)}{e(w_j/v)}l_j
			\\
			\Rightarrow l_i=&\frac{e(w_i/v)}{e(w_j/v)}l_j
		\end{aligned}		
	\end{equation*}
	Since $l_1,\dots,l_n$ are integers, we see that for each $1\leq j\leq n$, we have that
	\begin{equation*}
		n_{w_j/v}=\frac{e(w_j/v)}{\gcd(e(w_1/v),\dots,e(w_n/v))}\mid l_j.
	\end{equation*}
	And this implies that $\mathfrak{a}_v\mid\mathfrak{b}$, i.e., $\{\mathfrak{a}_v\}_{v\in\mathcal{P}_K}$ generates $I_L^G$.
\end{proof}
An immediate corollary is the following.
\begin{corollary}\label{cor: I(L)^G/I(K)}
	The quotient group $I_L^G/I_K$ is a finite abelian group, and we have
	\begin{equation*}
		I_L^G/I_K\cong\prod_{v\in\mathcal{P}_K}\mathbb{Z}/e(v,L/K)\mathbb{Z}
	\end{equation*}
	where $v$ runs over all finite primes of $K$.
\end{corollary}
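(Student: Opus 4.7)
The corollary is essentially a direct bookkeeping consequence of Lemma~\ref{lemma: I(L)^G}, so the plan is to set up the right bases and read the quotient off.

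First I would invoke Lemma~\ref{lemma: I(L)^G} to present $I_L^G$ as the free abelian group on the generators $\{\mathfrak{a}(\mathfrak{p})\mid\mathfrak{p}\in\mathcal{P}(K)\}$, while simultaneously $I_K$ is free abelian on the generators $\{\mathfrak{p}\mid\mathfrak{p}\in\mathcal{P}(K)\}$. The map $I_K\hookrightarrow I_L$ obviously lands in the $G$-invariant subgroup $I_L^G$, since principal ideals of $K$ (and more generally all ideals of $K$) are fixed by every $g\in G$ acting on $I_N$. Thus I obtain a homomorphism $I_K\hookrightarrow I_L^G$ between two free abelian groups of the same countably infinite rank.

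Next I would describe this embedding in terms of the two bases: by the very definition of $\mathfrak{a}(\mathfrak{p})$, the identity $\mathfrak{a}(\mathfrak{p})^{e_{L/K}(\mathfrak{p})}=\mathfrak{p}$ holds in $I_L$, so the basis element $\mathfrak{p}\in I_K$ corresponds to the element $\mathfrak{a}(\mathfrak{p})^{e_{L/K}(\mathfrak{p})}\in I_L^G$. Since the basis of $I_K$ is indexed by $\mathcal{P}(K)$ in the same way as the basis of $I_L^G$, the embedding is diagonal with respect to these two bases, and the quotient splits as
\begin{equation*}
    I_L^G/I_K\cong\bigoplus_{\mathfrak{p}\in\mathcal{P}(K)}\mathbb{Z}/e_{L/K}(\mathfrak{p})\mathbb{Z}.
\end{equation*}

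Finally, for the finiteness claim, I would observe that whenever $\mathfrak{p}$ is unramified in $N/K$ every $e_i$ in the factorization equals $1$, hence $e_{L/K}(\mathfrak{p})=1$ and the corresponding summand is trivial. Since only finitely many primes of $K$ ramify in the Galois closure $N/K$, all but finitely many summands vanish, giving a finite abelian group. There is no real obstacle here; the only point meriting care is making the basis-level identification of the inclusion $I_K\hookrightarrow I_L^G$ precise, so that the quotient can be read off directly without further work.
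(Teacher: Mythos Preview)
Your proposal is correct and follows essentially the same argument as the paper: both identify $I_K$ and $I_L^G$ as free abelian groups on bases indexed by $\mathcal{P}(K)$, observe that the inclusion is diagonal via $\mathfrak{p}=\mathfrak{a}(\mathfrak{p})^{e_{L/K}(\mathfrak{p})}$, and conclude finiteness from the fact that only finitely many primes ramify. Your write-up is somewhat more detailed than the paper's, but the route is the same.
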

\begin{proof}
	This is a generalization of Roquette and Zassenhaus~\cite[(8)]{RZ1969ClassRank} to the case when the base field is a number field $K$ instead of $\mathbb{Q}$. 
	Note that $\mathfrak{a}_v^{e(\mathfrak{p},L/K)}=v$.
	So $I_K$ is a free abelian subgroup generated by $\{\mathfrak{a}_v^{e(\mathfrak{p},L/K)}\}_{v\in\mathcal{P}_K}$, and 
	\begin{equation*}
		I_L^G/I_K=\langle\mathfrak{a}_v\mid\mathfrak{a}_v^{e(\mathfrak{p},L/K)}=1\rangle\cong\prod_{v\in\mathcal{P}_K}\mathbb{Z}/e(\mathfrak{p},L/K)\mathbb{Z}.
	\end{equation*}
\end{proof}
Let us give a notation for the subgroup of $\operatorname{Cl}_L$ generated by ideals in $I_L^G$.
\begin{definition}\label{def: invariant part of fractional ideals}
    Let $L/K$ be a subextension of $N/K$.
    Define
	\begin{equation*}
		\operatorname{D}_{L}:=I_{L}^G/P_{L}^G.
	\end{equation*}
\end{definition}
We'll give an estimate for $\operatorname{rk}_p\operatorname{D}_L$, which serves as an estimate of $\operatorname{rk}_p\operatorname{Cl}_L$.
\begin{proposition}\label{prop: D(L) subgroup of Cl(L)}
    Let $L/K$ be a subextension of $N/K$.
    The group \(\operatorname{D}_{L}\) is a subgroup of \(\operatorname{Cl}_{L}\).
    Moreover, \(\operatorname{Cl}_N^G/\operatorname{D}_N\) is annihilated by \(\lvert G\rvert\).
\end{proposition}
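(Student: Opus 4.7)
The plan is to handle the two statements separately. Part (1) reduces to a kernel computation, and part (2) follows from the long exact sequence of group cohomology applied to the defining sequence of the class group.

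For part (1), I would consider the natural map $\varphi : I_{L'}^G \to \operatorname{Cl}_{L'}$ sending $I \mapsto [I]$, and interpret $\operatorname{Cl}_{L'}^G$ as the image of this map, i.e., the subgroup of $\operatorname{Cl}_{L'}$ consisting of classes representable by $G$-invariant ideals (this is the natural interpretation even when $L'/K$ is not Galois and $\operatorname{Cl}_{L'}$ carries no intrinsic $G$-action). The key computation is that the kernel of $\varphi$ is
\begin{equation*}
I_{L'}^G \cap P_{L'} = (I_{L'} \cap I_N^G) \cap P_{L'} = P_{L'} \cap I_N^G = P_{L'}^G,
\end{equation*}
where the middle equality uses $P_{L'} \subseteq I_{L'}$. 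This immediately yields the desired injection $\operatorname{D}_{L'} = I_{L'}^G/P_{L'}^G \hookrightarrow \operatorname{Cl}_{L'}^G$.

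For part (2), since $N/K$ is Galois, the standard sequence
\begin{equation*}
0 \to P_N \to I_N \to \operatorname{Cl}_N \to 0
\end{equation*}
is a short exact sequence of $G$-modules. Taking the long exact sequence in group cohomology gives
\begin{equation*}
0 \to P_N^G \to I_N^G \to \operatorname{Cl}_N^G \to H^1(G, P_N) \to H^1(G, I_N).
\end{equation*}
By the kernel computation from part (1) applied to $L' = N$, the image of $I_N^G \to \operatorname{Cl}_N^G$ is precisely $\operatorname{D}_N$, so exactness produces an injection $\operatorname{Cl}_N^G/\operatorname{D}_N \hookrightarrow H^1(G, P_N)$. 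Since $G$ is finite of order $\lvert G\rvert$, the standard fact that group cohomology of a finite group is annihilated by its order forces $H^1(G, P_N)$, and hence $\operatorname{Cl}_N^G/\operatorname{D}_N$, to be annihilated by $\lvert G\rvert$.

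I do not anticipate any serious obstacle in this proof; both steps amount to standard cohomological reasoning, in the spirit of Roquette and Zassenhaus but adapted to an arbitrary base number field $K$. The only conceptually delicate point is making sense of $\operatorname{Cl}_{L'}^G$ when $L'/K$ is not Galois, which is resolved by the interpretation above via the fixed embedding $I_{L'} \hookrightarrow I_N$ from Definition~\ref{def: invariant part of fractional ideals}.
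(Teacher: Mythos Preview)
Your approach is essentially identical to the paper's: part (1) via the kernel computation (the paper phrases this as the left exact sequence $1\to P_{L'}^G\to I_{L'}^G\to\operatorname{Cl}_{L'}$), and part (2) via the long exact cohomology sequence attached to $1\to P_N\to I_N\to\operatorname{Cl}_N\to 1$ together with the fact that $H^1(G,P_N)$ is annihilated by $\lvert G\rvert$. One small caveat on your handling of the delicate point: interpreting $\operatorname{Cl}_{L'}^G$ as the \emph{image} of $\varphi$ would make $\operatorname{D}_{L'}=\operatorname{Cl}_{L'}^G$ tautologically rather than merely a subgroup; the paper's own proof in fact only establishes $\operatorname{D}_{L'}\subseteq\operatorname{Cl}_{L'}$ for general $L'$ (which is all that is used downstream), so the superscript $G$ in the statement is loose in the non-Galois case and you need not worry about it further.
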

\begin{proof}
	For $L$, we see that $I_L^G$ is a subgroup of $I_L$.
	And $P_L^G=I_L^G\cap P_L$ which is the definition.
	So $I_L^G/P_L^G=I_L^GP_L/P_L$ is naturally a subgroup of $\operatorname{Cl}_L$.
	For $N$, we first have the short exact sequence
	\begin{equation*}
		1\to P_N\to I_N\to\operatorname{Cl}_N\to1.
	\end{equation*}
	By taking the $G$-invariant part, we obtain the long exact sequence
	\begin{equation*}
		1\to P_N^G\to I_N^G\to\operatorname{Cl}_N^G\to H^1(G,P_N)\to\cdots.
	\end{equation*}
	Since $H^1(G,P_N)$ is annihilated by \(\lvert G\rvert\) (see Neukirch and Schmidt and Wingberg~\cite[1.6.1]{neukirch2013cohomology}), we see that the cokernel of $I_N^G\to\operatorname{Cl}_N^G$ also satisfies \(\lvert G\rvert\cdot\operatorname{Cl}_N^G/\operatorname{D}_N=0\).
\end{proof}
The group $P_K$ stays inside $P_N^G$ as a subgroup, and let us give the quotient subgroup $P_N^G/P_K$ an estimate by the following two results.
\begin{proposition}\label{prop: P(L)^G/P(K)}
    Fix a rational prime $p$.
    We have
	\begin{equation*}
		P_N^G/P_K\cong H^1(G,\mathscr{O}_N^*),
	\end{equation*}
	where $\mathscr{O}_N$ is the ring of integers of $N$.
\end{proposition}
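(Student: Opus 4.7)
The plan is to exploit the standard short exact sequence of $G$-modules
\begin{equation*}
1\to\mathscr{O}_N^*\to N^*\to P_N\to1,
\end{equation*}
where the right map sends $\alpha\mapsto(\alpha)=\alpha\mathscr{O}_N$ and the kernel is exactly the group of units. Both $N^*$ and $P_N$ carry natural actions of $G=G(N/K)$, and $\mathscr{O}_N^*$ is a $G$-submodule of $N^*$, so this is indeed a short exact sequence in the category of $G$-modules.

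First I would take the associated long exact sequence in group cohomology,
\begin{equation*}
1\to(\mathscr{O}_N^*)^G\to(N^*)^G\to P_N^G\to H^1(G,\mathscr{O}_N^*)\to H^1(G,N^*)\to\cdots,
\end{equation*}
and then invoke Hilbert's Theorem 90 to conclude $H^1(G,N^*)=0$. This immediately yields an exact sequence
\begin{equation*}
1\to\mathscr{O}_K^*\to K^*\xrightarrow{\partial}P_N^G\to H^1(G,\mathscr{O}_N^*)\to0,
\end{equation*}
after identifying $(N^*)^G=K^*$ and $(\mathscr{O}_N^*)^G=\mathscr{O}_K^*$.

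The remaining step, and really the only place one has to be careful, is to identify the image of $\partial$ with $P_K$ inside $P_N^G$ under the fixed embedding $I_K\hookrightarrow I_N$ of Definition~\ref{def: invariant part of fractional ideals}. By construction, $\partial$ sends $\alpha\in K^*$ to the ideal $\alpha\mathscr{O}_N\in P_N$, which is precisely the image under our embedding of $\alpha\mathscr{O}_K\in P_K$; conversely every principal $K$-ideal has such a generator. Hence $\operatorname{im}(\partial)=P_K$ inside $P_N^G$, and factoring the exact sequence through this image gives the desired isomorphism
\begin{equation*}
P_N^G/P_K\cong H^1(G,\mathscr{O}_N^*).
\end{equation*}

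I do not anticipate a genuine obstacle: the argument is a standard application of Hilbert 90 to the unit-ideal sequence, and the only real bookkeeping is making sure that the ambient copy of $P_K$ inside $P_N^G$ used to form the quotient matches the image of the connecting homomorphism coming out of $K^*$, which it does by our choice of embedding.
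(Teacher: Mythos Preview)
Your proposal is correct and follows essentially the same approach as the paper: both apply the long exact sequence in $G$-cohomology to the unit--id\`ele sequence $1\to\mathscr{O}_N^*\to N^*\to P_N\to1$, invoke Hilbert~90 to kill $H^1(G,N^*)$, and then identify the image of $K^*\to P_N^G$ with the embedded copy of $P_K$. The paper spells out the verification of $(\mathscr{O}_N^*)^G=\mathscr{O}_K^*$ in a short claim, which you state without proof, but otherwise the arguments are the same (one cosmetic remark: the map you label $\partial$ is not the connecting homomorphism but simply the map induced by $\alpha\mapsto(\alpha)$; your description of it is nonetheless correct).
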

\begin{proof}
	Let us consider the following short exact sequence
	\begin{equation*}
		1\to\mathscr{O}_N^*\to N^*\to P_N\to1,
	\end{equation*}
	which is induced by $x\mapsto (x)$ for all $x\in N^*$.
	Taking the $G$-invariant part, we have the following long exact sequence:
	\begin{equation*}
		1\to H^0(G,\mathscr{O}_N^*)\to H^0(G,N^*)\to H^0(G,P_N)\to H^1(G,\mathscr{O}_N^*)\to H^1(G,N^*)\to\cdots.
	\end{equation*}
	Note that by additive Hilbert's 90, we have $N^G=K$, as additive modules.
	But the actions of \(G\) on $N$ and $N^*$ are the same.
	In other words, as $G$-sets, $N^*$ is a $G$-subset of $N$.
	So, we know that $H^0(G,N^*)=K^*$.
	
	Claim: $H^0(G,\mathscr{O}_N^*)=\mathscr{O}_K^*$.
	It is clear that $\mathscr{O}_K^*\subseteq(\mathscr{O}_N^*)^G$.
	On the other hand, if $x\in\mathscr{O}_N^*$ is fixed by $G$, then as an element of $L^G$, it must be an element of $K$ by the additive Hilbert 90.
	So $x\in K^*$ is an element such that $\operatorname{Nm}_{L/\mathbb{Q}}(x)=\pm1$, which implies that $\operatorname{Nm}_{K/\mathbb{Q}}(x)=\pm1$.
	This shows that $x\in\mathscr{O}_K^*$.
	
	According to Hilbert's 90, we know that $H^1(G,N^*)=1$.
	So $H^0(G,P_N)=P_N^G\to H^1(G,\mathscr{O}_N^*)$ is surjective with kernel equal the image of $K^*$.
	The map $K^*\to P_N^G$ is induced by $N^*\to P_N$. 
	And $\operatorname{im}(K^*)\cong K^*/\mathscr{O}_K^*\cong P_K$.
	So $K^*\to P_N^G$ gives the usual embedding of $P_K$ in $P_N^G$.
	And we obtain the desired result.
\end{proof}
\begin{proposition}\label{prop: H^1(G,O_L^*)}
	The cohomology group $H^1(G,\mathscr{O}_N^*)$ is a finite abelian group that is annihilated by $\lvert G\rvert$.
	Moreover, we have the following estimate
	\begin{equation*}
		\operatorname{rk}_{p}H^1(G,\mathscr{O}_N^*)\leq\operatorname{rk}_{p}\operatorname{Map}(G,\mathscr{O}_N^*),
	\end{equation*}
	for each rational prime $p$.
	Here $\operatorname{Map}(G,\mathscr{O}_N^*)$ is the abelian group consisting of morphisms between sets.
	In particular, $\operatorname{Map}(G,\mathscr{O}_N^*)$ is a finite abelian group that could be generated by $(r_N+1)[N:K]$ elements, where $r_N$ is the rank of $\mathscr{O}_N^*$.
\end{proposition}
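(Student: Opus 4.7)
The plan breaks into three pieces, each using a standard fact and then Proposition~\ref{prop: inequalities of p-rank}. First, I would handle the annihilation statement: for any $G$-module $M$ and any $n\geq 1$, the composition of corestriction and restriction to the trivial subgroup shows that multiplication by $\lvert G\rvert$ kills $H^n(G,M)$. Applying this with $M=\mathscr{O}_N^*$ gives $\lvert G\rvert\cdot H^1(G,\mathscr{O}_N^*)=0$.

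Second, for the finiteness, I would invoke Dirichlet's unit theorem: $\mathscr{O}_N^*$ is a finitely generated abelian group, isomorphic to $\mu(N)\oplus\mathbb{Z}^{r_1+r_2-1}$ where $\mu(N)$ is the (finite cyclic) group of roots of unity in $N$. Because $G$ is finite and the coefficient module is finitely generated, $H^1(G,\mathscr{O}_N^*)$ is finitely generated; combined with the annihilation by $\lvert G\rvert$, it is finite.

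Third, for the $p$-rank bound, I would use the inhomogeneous cochain description. The $1$-cocycles $Z^1(G,\mathscr{O}_N^*)$ form a subgroup of $\operatorname{Map}(G,\mathscr{O}_N^*)$ (namely the maps $f$ with $f(gh)=f(g)\cdot g(f(h))$), and $H^1(G,\mathscr{O}_N^*)$ is a quotient of $Z^1$ by the $1$-coboundaries. Parts (1) and (2) of Proposition~\ref{prop: inequalities of p-rank} then give
\begin{equation*}
\operatorname{rk}_p H^1(G,\mathscr{O}_N^*)\leq\operatorname{rk}_p Z^1(G,\mathscr{O}_N^*)\leq\operatorname{rk}_p \operatorname{Map}(G,\mathscr{O}_N^*),
\end{equation*}
which is exactly the displayed inequality.

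For the final sentence of the proposition, I would simply observe that as an abelian group $\operatorname{Map}(G,\mathscr{O}_N^*)\cong(\mathscr{O}_N^*)^{\lvert G\rvert}$, and using the Dirichlet decomposition above together with the fact that $\mu(N)$ is cyclic (so has $p$-rank at most $1$), the $p$-rank of $\operatorname{Map}(G,\mathscr{O}_N^*)$ is at most $\lvert G\rvert(r_1+r_2)\leq [N:\mathbb{Q}]^2$, a quantity depending only on $[N:\mathbb{Q}]$. There is no real obstacle here: the whole argument is an assembly of Dirichlet's theorem, the standard cohomological annihilation, and the monotonicity of $\operatorname{rk}_p$ under subgroups and quotients already recorded in Proposition~\ref{prop: inequalities of p-rank}.
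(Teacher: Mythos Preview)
Your proposal is correct and follows essentially the same route as the paper: both use the cochain description $H^1\twoheadleftarrow Z^1\hookrightarrow\operatorname{Map}(G,\mathscr{O}_N^*)$ together with Proposition~\ref{prop: inequalities of p-rank}, and then bound $\operatorname{rk}_p\operatorname{Map}(G,\mathscr{O}_N^*)$ by $(r_1+r_2)\lvert G\rvert\leq[N:\mathbb{Q}]^2$ via Dirichlet's unit theorem. You give a bit more detail on the annihilation and finiteness (which the paper leaves implicit), but the substance is the same.
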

\begin{proof}
	According to the interpretation of cohomolology group $H^1(G,\mathscr{O}_N^*)$, we know that the $1$-cocycles could be represented by the group of crossed homomorphisms from $G$ to $\mathscr{O}_N^*$, and we denote it by $Z^1(G,\mathscr{O}_N^*)$.
	On the other hand, as a group $Z^1(G,\mathscr{O}_N^*)$ is a subgroup of $\operatorname{Map}(G,\mathscr{O}_N^*)$.
	Therefore we have the following exact sequences
	\begin{equation*}
		\begin{aligned}
			&0\to Z^1(G,\mathscr{O}_N^*)\to\operatorname{Map}(G,\mathscr{O}_N^*)\\
			&Z^1(G,\mathscr{O}_N^*)\to H^1(G,\mathscr{O}_N^*)\to0.
		\end{aligned}
	\end{equation*}
	By Proposition~\ref{prop: inequalities of p-rank}, we see that 
	\begin{equation*}
		\operatorname{rk}_pH^1(G,\mathscr{O}_N^*)\leq\operatorname{rk}_pZ^1(G,\mathscr{O}_N^*)\leq\operatorname{rk}_p\operatorname{Map}(G,\mathscr{O}_N^*).
	\end{equation*}
	Note that every object here is a finitely generated abelian group.
	Then, let us construct a set that generates $\operatorname{Map}(G,\mathscr{O}_N^*)$.
	If $\{\alpha_0:=\zeta_N,\alpha_1,\dots,\alpha_{r_N}\}$ is a basis of $\mathscr{O}_N^*$, then for each $g\in G$ and for each $i=0,\dots,r_N$, let $\varphi_{g,i}:G\to\mathscr{O}_N^*$ be the map such that $\varphi_{g,i}(g)=\alpha_i$ and $\varphi_{g,i}(h)=1$ for all $h\neq g$.
	Clearly for each $f\in\operatorname{Map}(G,\mathscr{O}_N^*)$, we see that $f$ is a linear combination of $\{\varphi_{g,i}\}_{g,i}$.
	The size of $\{\varphi_{g,i}\}_{g,i}$ is exactly $(r_N+1)\cdot\lvert G\rvert$.
	And we are done for the proof.
\end{proof}
\subsection{Proof of Theorem~\ref{thm: estimate of class group}}
Let us prove the theorem in this section.
\begin{proof}[Proof of Theorem~\ref{thm: estimate of class group}]
	It suffices to give an estimate for the $p$-rank of $\operatorname{D}_L$, for it is a subgroup of $\operatorname{Cl}_L$ by Proposition~\ref{prop: D(L) subgroup of Cl(L)}.
	Using the relation $\operatorname{Cl}_K=I_K/P_K$, we have a short exact sequence
	\begin{equation*}
		1\to\operatorname{Cl}_K\to I_L^G/P_K\to I_L^G/I_K\to1.
	\end{equation*}
	Clearly $p^{l} I_L^G/P_K\to p^{l}I_L^G/I_K$ is surjective, and the kernel is included in $\operatorname{Cl}_K$.
	Using Proposition~\ref{prop: inequalities of p-rank}, we see that
	\begin{equation}\label{eqn: estimate of class group 1}
		\operatorname{rk}_p p^{l}I_L^G/I_K\leq\operatorname{rk}_p p^{l}I_L^G/P_K\leq\operatorname{rk}_p\operatorname{Cl}_K+\operatorname{rk}_p p^{l}I_L^G/I_K.
	\end{equation}
	By Corollary~\ref{cor: I(L)^G/I(K)}, we see that
	\begin{equation}\label{eqn: estimate of class group 2}
		\begin{aligned}
			\operatorname{rk}_p p^{l}I_L^G/I_K=&\operatorname{rk}_p p^{l} \prod_{v\in\mathcal{P}_K}\mathbb{Z}/e(v,L/K)\mathbb{Z}\\
			=&\#\{v\in\mathcal{P}_K\mid e(v,L/K)\equiv0\bmod{p^{l+1}}\}.
		\end{aligned}		
	\end{equation}
	On the other hand, we have the following commutative diagram
	\begin{equation*}
		\begin{tikzcd}
			&1\arrow[r]&P_L^G/P_K\arrow[r]\arrow[d]&I_L^G/P_K\arrow[r]\arrow[d]&\operatorname{D}_L\arrow[r]\arrow[d]&1\\
			&1\arrow[r]&P_N^G/P_K\arrow[r]&I_N^G/P_K\arrow[r]&\operatorname{D}_N\arrow[r]&1
		\end{tikzcd}
	\end{equation*}
	Note that by Proposition~\ref{prop: H^1(G,O_L^*)}, there exists some constant $c>0$ that only depends on $[N:K]$ such that
	\begin{equation*}
		\operatorname{rk}_p P_N^G/P_K\leq c.
	\end{equation*}
        Here, $c$ could be taken to be $[N:K]^2$.
	Since $P_L^G/P_K$ is a subgroup of $P_N^G/P_K$, by Proposition~\ref{prop: inequalities of p-rank}, we have that
	\begin{equation}\label{eqn: estimate of class group 3}
		\operatorname{rk}_p P_L^G/P_K\leq\operatorname{rk}_p P_N^G/P_K\leq c.
	\end{equation}
	Note also that $p^{l}I_L^G/P_K\to p^{l}\operatorname{D}_L$ is surjective, and the kernel is included in $P_L^G/P_K$.
	By Proposition~\ref{prop: inequalities of p-rank} again, we have that
	\begin{equation*}
		\operatorname{rk}_p p^{l}I_L^G/P_K-\operatorname{rk}_p P_L^G/P_K
		\leq\operatorname{rk}_p p^{l}\operatorname{D}_L
		\leq\operatorname{rk}_p p^{l}I_L^G/P_K	
	\end{equation*}
	Together with (\ref{eqn: estimate of class group 1}), (\ref{eqn: estimate of class group 2}) and (\ref{eqn: estimate of class group 3}), we see that
	\begin{equation*}
		\begin{aligned}
			&\#\{v\in\mathcal{P}_K\mid e(v,L/K)\equiv0\bmod{p^{l+1}}\}-c
			\\
			\leq&\operatorname{rk}_p p^{l}\operatorname{D}_L
			\leq\#\{v\in\mathcal{P}_K\mid e(v,L/K)\equiv0\bmod{p^{l+1}}\}+\operatorname{rk}_p\operatorname{Cl}_K.
		\end{aligned}		
	\end{equation*}
	And we are done for the proof.
\end{proof}
Note that for the subgroup $\operatorname{D}_L$, not only we have a lower bound for $\operatorname{rk}_p\operatorname{D}_L$, but also an upper bound that is related to $\operatorname{rk}_p\operatorname{Cl}_{K}$.
For any algebraic extension of number fields $L/K$, there is an induced map $\operatorname{Nm}_{L/K}:\operatorname{Cl}_L\to\operatorname{Cl}_K$ by the norm map on the group of fractional ideals $\operatorname{Nm}_{L/K}:I_L\to I_K$.
The kernel of $\operatorname{Nm}_{L/K}$ is the relative class group, i.e., we have the exact sequence:
\begin{equation*}
	1\to\operatorname{Cl}(L/K)\to\operatorname{Cl}_L\stackrel{\operatorname{Nm}}{\longrightarrow}\operatorname{Cl}_K\to1.
\end{equation*}
Let us give an estimate for $\operatorname{rk}_p\operatorname{Cl}(L/K)$.
\begin{corollary}\label{cor: estimate of rel class group}
    Let $L/K$ be a finite extension of number fields.
    Fix an integer $l\geq1$ and a finite rational prime $p$.
    There exists a constant $c$, depending on the degree $[N:\mathbb{Q}]$, such that
	\begin{equation*}
		\operatorname{rk}_p p^{l}\operatorname{Cl}(L/K)\geq\#\{v\in P_K\mid e(v,L/K)\equiv0\bmod{p^{l+1}}\}-c.
	\end{equation*}
\end{corollary}
\begin{proof}
	Recall that $\mathfrak{a}_v:=\prod_{w\mid v}w^{n_{w/v}}$.
        See also~(\ref{eqn: a(p)}).
	Since $\mathfrak{a}_v$ is fixed by all Galois actions and $a_v^{e(v,L/K)}=v$, we see that
	\begin{equation*}
		\operatorname{Nm}_{L/K}(\mathfrak{a}_v)=\mathfrak{a}_v^{[L:K]}=v^{[L:K]/e(v,L/K)}.
	\end{equation*}
	By the fundamental identity 
	\begin{equation*}
		[L:K]=\sum_{w\mid v}e(w/v)f(w/v),
	\end{equation*}
	we know that $e(v,L/K)\mid[L:K]$, i.e., the norm of $\mathfrak{a}_v$ is a power of $v$.
	Clearly this implies that $\mathfrak{a}_v$ is contained in the relative class group $\operatorname{Cl}(L/K)$ if the prime ideal $v$ is principal.
	
	Define $I_{L/K}^G$ to be the subgroup of $I_L$ generated by $\{\mathfrak{a}_v\mid v\in P_K\}$.
	Let $\operatorname{D}(L/K)$ be the subgroup of $\operatorname{D}_L$ that is generated by the image of $I_{L/K}^G$, i.e.,
	\begin{equation*}
		\operatorname{D}(L/K)=I_{L/K}^G/(P_L^G\cap I_{L/K}^G).
	\end{equation*}
	We see that $I_{L/K}^G$ is a subgroup of $I_L^G$, and
	\begin{equation*}
		I_{L/K}^G I_K/I_K=I_{L/K}^G/(I_K\cap I_{L/K}^G)\cong\prod_{v\in P_K}\mathbb{Z}/e(v,L/K)\mathbb{Z},
	\end{equation*} 
	which is a subgroup of $I_L^G/I_K$.
	Using Proposition~\ref{prop: inequalities of p-rank}, the surjective homomorphism
	\begin{equation*}
		I_{L/K}^G P_K/P_K\to I_{L/K}^G I_K/I_K\to1
	\end{equation*}
	shows that 
	\begin{equation*}
		\begin{aligned}
			\operatorname{rk}_{p}p^{l}I_{L/K}^GP_K/P_K
			\geq&\operatorname{rk}_{p}p^{l}I_{L/K}^G I_K/I_K
			\\
			=&\#\{v\in P_K\mid e(v,L/K)\equiv0\bmod{p^{l+1}}\}.
		\end{aligned}		
	\end{equation*}
	On the other hand, we have that
	\begin{equation*}
		1\to\frac{P_L^G\cap I_{L/K}^G}{P_K\cap I_{L/K}^G}\to\frac{I_{L/K}^G}{P_K\cap I_{L/K}^G}\to \operatorname{D}(L/K)\to1.
	\end{equation*}
	By (\ref{eqn: estimate of class group 3}), we know that there exists some constant $c$ that only depends on $[N:K]$ such that
	\begin{equation*}
		\operatorname{rk}_{p}P_L^G/P_K\leq c.
	\end{equation*}
	Together with the above short exact sequence, we have that
	\begin{equation*}
		\begin{aligned}
			\operatorname{rk}_pp^{l}\operatorname{D}(L/K)
			\geq&\operatorname{rk}_pp^{l}\frac{I_{L/K}^G}{P_K\cap I_{L/K}^G}-\operatorname{rk}_p\frac{P_L^G\cap I_{L/K}^G}{P_K\cap I_{L/K}^G}
			\\
			\geq&\operatorname{rk}_pp^{l}I_{L/K}^G P_K/P_K-\operatorname{rk}_{p}P_L^G/P_K
			\\
			\geq&\#\{v\in P_K\mid e(v,L/K)\equiv0\bmod{p^{l+1}}\}-c.
		\end{aligned}
	\end{equation*}
\end{proof}
\subsection{Connections with distribution of class groups}
We start with a generalization of the hypothesis in the author~\cite{wang2022distributionOT}.

In this section, let $G$ be a transitive permutation group in $S_n$.
Fix a number field $K$.
Define $\mathcal{E}$ as a subset of $\mathcal{E}(G;K)$.
Let $C$ be a counting function on $\mathcal{E}$ such that $\mathcal{E}$ is ordered by $C$.

In concrete examples, we may exclude the fields $L$ that contain a strictly larger group of roots of unity than $K$.
In other words, a field $L\in\mathcal{E}(G;K)$ is contained in $\mathcal{E}$ if and only if
\begin{equation*}
	\mu(K)=\mu(L),
\end{equation*}
where $\mu(K)$ means the group of roots of unity in $K$.
In other cases, one may consider the fields that satisfy some local conditions.
For example, define $\mathcal{E}$ to be the subset of $\mathcal{E}(C_2;\mathbb{Q})$ such that a quadratic $L$ is contained in $\mathcal{E}$ if and only if $G_{\infty}(L/\mathbb{Q})=C_2$, where $G_{\infty}$ means the decomposition group at infinity.
In other words, $\mathcal{E}$ is the set of all the imaginary quadratic number fields.

Let $\Omega$ be a subset of $G$.
We say that $\Omega$ is closed under conjugation if for each $x\in\Omega$, its orbit under the action of $G$ is included in $\Omega$,
i.e., $x\in\Omega\Rightarrow G\cdot x\subseteq\Omega$.
We say that $\Omega$ is closed under invertible powering if for each $x,y\in G$ such that $\langle x\rangle=\langle y\rangle$ we have $x\in\Omega$ if and only if $y\in\Omega$.
Here, $\langle x\rangle$ is the subgroup of $G$ generated by $x$.
\begin{definition}\label{def: specifications in general}
    Fix a number field $K$ with a nonempty set $T$ of prime ideals.
    Let $G$ be a transitive permutation group,
    and $\Omega$ a nonempty subset of $G$ that does not contain the identity and is closed under invertible powering and conjugation.
    For each $L\in\mathcal{E}$, define
    \begin{equation*}
        \mathcal{P}_{T}(\Omega,L):=\{\mathfrak{p}\in T: \mathfrak{p}\nmid\lvert G\rvert\infty\text{ and } y_{\mathfrak{p}}\in\Omega\},
    \end{equation*}
    where $y_{\mathfrak{p}}$ is a generator of the inertia subgroup $I_{\mathfrak{P}}$ for some $\mathfrak{P}/\mathfrak{p}$.
    If $T=\mathcal{P}_K$, then we just write it as $\mathcal{P}_{K}(\Omega,L)$.
    For each non-negative integer $\gamma$,
    define $\mathcal{E}_{\Omega,\gamma}^{T}$ to be the subset of $\mathcal{E}$ by
    \begin{equation*}
        \mathcal{E}_{\Omega,\gamma}:=\{L\in\mathcal{E}\mid\#\mathcal{P}_{\Omega,L}(T)=\gamma\}.
    \end{equation*}
\end{definition}
Note that when $\mathfrak{p}\nmid\lvert G\rvert\infty$, it is unramified or tamely ramified.
So its inertia subgroup (up to conjugate) is cyclic and admits a generator.
Due to our conditions on $\Omega$, it makes no difference which prime $\mathfrak{P}/\mathfrak{p}$ or which one of the generators of the inertia subgroup $I_{\mathfrak{P}}$ is picked up to do the computation.
In other words, the notation $\mathcal{E}_{\Omega,\gamma}^{T}$ is well-defined.
And for each $L\in\mathcal{E}$, it must be contained in one of the $\mathcal{E}_{\Omega,\gamma}^{T}$ when $\gamma$ runs over all non-negative integers.
Therefore, we have that
\begin{equation*}
	\mathcal{E}=\bigsqcup_{\gamma=0}^{\infty}\mathcal{E}_{\Omega,\gamma}^{T}.
\end{equation*}
Clearly, $\mathcal{E}_{\Omega,\gamma}^{T}$ is equipped with the counting function $C$ coming from $\mathcal{E}$.
In particular, we require that for any $X>0$, the set $\{L\in\mathcal{E}\mid C(L)<X\}$ be finite, so that we can avoid strange counting functions.
Then we see that the functions $N_{C}(\mathcal{E},X)$ and $N_{C}(\mathcal{E}_{\Omega,\gamma}^{T},X)$ are both well-defined.
And their arithmetic meaning is simply counting fields.
\begin{hypothesis}\label{hypothesis: comparison between field counting results}
	\begin{enumerate}
		\item For each non-negative integer $\gamma$, there exists some non-negative integer $\gamma'$ such that
		\begin{equation*}
			N_{C}(\mathcal{E}_{\Omega,\gamma}^{T},X)=o(N_{C}(\mathcal{E}_{\Omega,\gamma'}^{T},X)),
		\end{equation*}
		as $X\to\infty$.
		In this case, we say that Hypothesis 1 holds for $((\mathcal{E},C),\Omega,T)$.
		\item For each non-negative integer $\gamma$, we have that
		\begin{equation*}
			N_{C}(\mathcal{E}_{\Omega,\gamma}^{T},X)=o(N_{C}(\mathcal{E},X)),
		\end{equation*}
		as $X\to\infty$.
		In this case, we say that Hypothesis 2 holds for $((\mathcal{E},C),\Omega,T)$.
	\end{enumerate}
\end{hypothesis}
It is not hard to show that Hypothesis 1 implies 2, and we just leave it to the reader.
\begin{example}
	Though in this paper, we will study the questions under a suitable set-up, so that we could prove that the hypothesis is true.
	But there are ``counter-examples''.
	Let $G:=S_3$.
	Then $\mathcal{E}=\mathcal{E}(G;\mathbb{Q})$ means the non-Galois cubic fields.
	Let $\Omega:=\{(123),(132)\}\subseteq G$, and $T$ be the set of all rational primes.
	The notation $\mathcal{E}_{\Omega,\gamma}^T$ means all non-Galois cubic fields $L$ such that there are exactly $\gamma$ totally ramified primes $p\nmid6$.
	According to Davenport-Heilbronn~\cite{Davenport1971Cubic}, if we order cubic fields by (absolute) discriminant, denoted by $d$, then there exists some constant $c>0$ such that
	\begin{equation*}
		\lim_{X\to\infty}\frac{N_{d}(\mathcal{E}_{\Omega,0}^T,X)}{N_d(\mathcal{E},X)}=c.
	\end{equation*}
	In other words, the hypothesis fails if we order the cubic fields by the discriminant.
	To explain the above phenomena, note that the discriminant of a cubic field could be written as $df^2$ where $d$ is the discriminant of the associated quadratic number field, while $f$ means the product of all totally ramified primes.
	Therefore, if we order fields by the discriminant, then the properties of quadratic number fields may affect the statistical results in a nontrivial way.
	Of course, this is just a philosophical reasoning, not a proof.
	See also Bartel and Lenstra~\cite{bartel2020class}.
\end{example}
Then we show the connection between the estimate of class groups and the distribution of class groups.
\begin{theorem}\label{thm: 0-prob and infty-moment}
    Fix a rational prime $p$ and a non-negative integer $l$.
    \begin{enumerate}
        \item Assume $e_G\notin\Omega$ to be a nonempty subset of $G$ closed under invertible powering and conjugation such that there exists some constant $c\geq0$ for each $L\in\mathcal{E}$, we have
        \begin{equation*}
            \operatorname{rk}_p p^{l}\operatorname{Cl}(L/K)\geq\#\mathcal{P}_{T}(\Omega,L)-c.
	\end{equation*}
	If Hypothesis~\ref{hypothesis: comparison between field counting results}(2) holds for $((\mathcal{E},C),\Omega,T)$, then for each non-negative integer $r$, we have that
	\begin{equation*}
		\mathbb{P}_{C,\mathcal{E}}(\operatorname{rk}_pp^{l}\operatorname{Cl}(L/K)\leq r)=0.
	\end{equation*}
	\item If there exists a constant $0\leq a<1$ such that for each non-negative integer $r$, we have $\mathbb{P}_{C,\mathcal{E}}(\operatorname{rk}_p p^{l}\operatorname{Cl}(L/K)\leq r)\leq a$, then
	\begin{equation*}
		\mathbb{E}_{C,\mathcal{E}}(\lvert\operatorname{Hom}(p^{l}\operatorname{Cl}(L/K),C_{p})\rvert)=+\infty.
	\end{equation*}
    \end{enumerate}
\end{theorem}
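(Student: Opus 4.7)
The plan is to deduce both parts from Corollary~\ref{cor: estimate of rel class group} together with the elementary identity $\lvert\operatorname{Hom}(A,C_p)\rvert=p^{\operatorname{rk}_p A}$ for a finite abelian group $A$.

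For part (1), first observe that for every $L\in\mathcal{E}$ the Galois closure $N/K$ has $[N:\mathbb{Q}]=\lvert G\rvert\cdot[K:\mathbb{Q}]$, independent of $L$; hence the constant $c$ produced by Corollary~\ref{cor: estimate of rel class group} can be chosen uniformly over $\mathcal{E}$. Next, for any $L\in\mathcal{E}_{\Omega,\gamma}^{\mathfrak{R}}$ with $\mathfrak{R}\subseteq P_K$, each of the $\gamma$ primes $\mathfrak{p}$ singled out by Definition~\ref{def: specifications in general} is simultaneously principal (because $\mathfrak{R}\subseteq P_K$) and satisfies $e_{L/K}(\mathfrak{p})\equiv0\bmod{p^l}$. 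The latter is the only genuinely arithmetic step: because $\mathfrak{p}\nmid\lvert G\rvert$ the prime is tamely ramified, so the ramification indices of primes of $L$ over $\mathfrak{p}$ are exactly the orbit sizes of the cyclic inertia subgroup acting on $G/\operatorname{Stab}_G(1)\cong\{1,\ldots,n\}$; their GCD is thus $e(g)$ for any inertia generator $g$, and $g\in\Omega\subseteq\Omega_{p^l}$ forces $p^l\mid e(g)$. Corollary~\ref{cor: estimate of rel class group} then yields
\begin{equation*}
    \operatorname{rk}_p p^{l-1}\operatorname{Cl}(L/K)\geq\gamma-c
\end{equation*}
uniformly in $L\in\mathcal{E}_{\Omega,\gamma}^{\mathfrak{R}}$. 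Consequently any $L$ with $\operatorname{rk}_p p^{l-1}\operatorname{Cl}(L/K)\leq r$ lies in $\bigcup_{\gamma=0}^{r+c}\mathcal{E}_{\Omega,\gamma}^{\mathfrak{R}}$, and summing the $o(N_C(\mathcal{E},X))$ estimates from Hypothesis~\ref{hypothesis: comparison between field counting results}(2) over the finitely many $\gamma\leq r+c$ forces $\mathbb{P}_{C,\mathcal{E}}(\operatorname{rk}_p p^{l-1}\operatorname{Cl}(L/K)\leq r)=0$.

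For part (2), rewrite the numerator of the relevant average as $\sum_L p^{\operatorname{rk}_p p^{l-1}\operatorname{Cl}(L/K)}$, and for each integer $R\geq0$ bound it below by $p^{R+1}$ times the count of $L$ with $\operatorname{rk}_p p^{l-1}\operatorname{Cl}(L/K)>R$. Dividing by $N_C(\mathcal{E},X)$ and passing to $\liminf_{X\to\infty}$, the assumed inequality $\mathbb{P}_{C,\mathcal{E}}(\operatorname{rk}_p p^{l-1}\operatorname{Cl}(L/K)\leq R)\leq a$ gives
\begin{equation*}
    \liminf_{X\to\infty}\mathbb{E}_{C,\mathcal{E}}(\lvert\operatorname{Hom}(p^{l-1}\operatorname{Cl}(L/K),C_p)\rvert)\geq p^{R+1}(1-a).
\end{equation*}
Since $a<1$ is fixed and $R$ is arbitrary, the right-hand side is unbounded, yielding the desired infinite $C_p$-moment.

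The only potentially delicate point in the plan is the tame-ramification identification $e_{L/K}(\mathfrak{p})=e(g)$ used in part (1), which is precisely why Definition~\ref{def: specifications in general} insists on $\mathfrak{p}\nmid\lvert G\rvert$. Once that is in place, the rest is bookkeeping: uniformity of $c$ from Corollary~\ref{cor: estimate of rel class group}, the $p$-rank inequalities of Proposition~\ref{prop: inequalities of p-rank}, and the definitions of $\mathbb{P}_{C,\mathcal{E}}$ and $\mathbb{E}_{C,\mathcal{E}}$.
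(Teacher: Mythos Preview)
Your proposal is correct and follows essentially the same route as the paper's proof: both parts hinge on Corollary~\ref{cor: estimate of rel class group} for (1) and on the lower bound $\lvert\operatorname{Hom}(A,C_p)\rvert\geq p^{R}$ for fields of rank at least $R$ in (2). If anything, you are more explicit than the paper in justifying why $\mathfrak{p}\in\mathcal{P}(\mathfrak{R})$ with inertia generator in $\Omega\subseteq\Omega_{p^l}$ forces $e_{L/K}(\mathfrak{p})\equiv0\bmod p^l$ via the tame orbit-size identification; the paper simply asserts the conclusion of Corollary~\ref{cor: estimate of rel class group} in this setting without spelling out that step.
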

\begin{proof}
	We first show (1).
	By the condition, for each non-negative integer $\gamma$ and for each $m>c$ if $L\in\mathcal{E}^{T}_{\Omega,\gamma+m}$ then
	\begin{equation*}
		\operatorname{rk}_p p^{l}\operatorname{Cl}(L/K)>\gamma.
	\end{equation*}
	So by Hypothesis~\ref{hypothesis: comparison between field counting results}(2), for each non-negative integer $r$, we have
	\begin{equation*}
		\begin{aligned}
			\mathbb{P}_{C,\mathcal{E}}(\operatorname{rk}_p p^l\operatorname{Cl}(L/K)\leq r)
			\leq&\lim_{X\to\infty}\frac{\sum_{\gamma=0}^{r+c}N_{C}(\mathcal{E}^{T}_{\Omega,\gamma},X)}{N_{C}(\mathcal{E},X)}\\
			=&\sum_{\gamma=0}^{r+c}\lim_{X\to\infty}\frac{N_{C}(\mathcal{E}^{T}_{\Omega,\gamma},X)}{N_{C}(\mathcal{E},X)}=0.
		\end{aligned}
	\end{equation*}	
	Then we show (2).
	For each small real number $0<\epsilon<1-a$, and for each real number $M>0$, there exists some integer $r_M\geq1$ such that for all $r\geq r_M$ we have 
	\begin{equation*}
		p^r(1-a-\epsilon)>M.
	\end{equation*}
	Recall that
	\begin{equation*}
		\mathbb{P}_{C,\mathcal{E}}(\operatorname{rk}_p p^{l}\operatorname{Cl}(L/K)\leq r)=
		\lim_{X\to\infty}
		\frac{\#\{L\in\mathcal{E}\mid C(L)<X\text{ and }\operatorname{rk}_p p^{l}\operatorname{Cl}(L/K)\leq r\}}
		{N_{C}(\mathcal{E},X)}.
	\end{equation*}
	So for the same $\epsilon$, there exists $N=N(\epsilon,r_M)>0$ such that for all $X>N$ we have
	\begin{equation*}
		\frac{\#\{L\in\mathcal{E}\mid C(L)<X\text{ and }\operatorname{rk}_p p^{l}\operatorname{Cl}(L/K)\leq r_M\}}
		{N_{C}(\mathcal{E},X)}<a+\epsilon.
	\end{equation*}
	Then for each $X>N$ we see that 
	\begin{equation*}
		\begin{aligned}
			&\frac{\sum_{C(L)<X}\lvert\operatorname{Hom}(p^l\operatorname{Cl}(L/K),C_p)\rvert}
			{N_{C}(\mathcal{E},X)}
			\\
			\geq&\frac{p^{r_M}\cdot\#\{C(L)<X\mid\operatorname{rk}_p p^l\operatorname{Cl}(L/K)\geq r_M\}}
			{N_{C}(\mathcal{E},X)}
			\\
			>&p^{r_M}(1-a-\epsilon)>M.
		\end{aligned}
	\end{equation*}
	This shows that the infinite moment holds.	
	And we are done for the whole proof.
\end{proof}
Finally we explain the condition in the theorem by the following result.
\begin{lemma}\label{lemma: ramification of primes in general}
    Let $G\subseteq S_n$ be a transitive permutation group with $G_1=\operatorname{Stab}_G(1)$.
    Let $N/K$ be a Galois $G$-extension with $L:=N^{G_1}$.
    Fix a finite prime $v$ of $K$, and let $x$ be a prime of $N$ above $v$ with the decomposition group $G_{x}\subseteq G$.
	\begin{enumerate}
		\item Define $\mathcal{P}_{v}(L):=\{w\in\mathcal{P}_L:w\mid v\}$.
		The morphism of sets
		\begin{equation*}
			G_1\backslash G/G_{x}\to\mathcal{P}_{v}(L),\quad G_1\sigma G_{x}\mapsto\sigma x\vert_L
		\end{equation*}
		is bijective.
		\item If $w=x\vert_L$, then
		\begin{equation*}
			[L_{w}:K_{v}]=\# G_{x}\cdot1
			\quad\text{and}\quad
			e(w/v)=\# I_{x}\cdot1,
		\end{equation*}
		where $I_{x}$ is the inertia subgroup, and $G_{x}\cdot1$, resp. $I_{x}\cdot1$, is the orbit.
		\item We have
		\begin{equation*}
			e(v,L/K)=\gcd(\# I_{v}\cdot i)_{i=1}^n,
		\end{equation*}
		where $I_{v}\subseteq G$ is the inertia subgroup of $v$ defined up to conjugacy.
	\end{enumerate}
\end{lemma}
\begin{proof}[Proof of the lemma]
    The first statement is a standard result from algebraic number theory.
    See Neukirch~\cite[p.54-55]{neukirch2013algebraic} for an explanation.

    \emph{Proof of (2)}.    
        We move on to the second one.
        We have the following chain of extensions of local fields:
        \begin{equation*}
            K_{v}\subseteq L_{w}\subseteq N_{x}.
	\end{equation*}
	Since $G_{x}\cong G(N_{x}/K_{v})$, we have that
	\begin{equation*}
		L_{w}=N_{x}^{G_{x}\cap G_1}.
	\end{equation*}
	To see why this is true, it is clear that the fixed subgroup $G(N_{x}/L_{w})$ is a subgroup of $G_{x}\cap G_1$, for $G(N_{x}/L_{w})$ must fix $L$.
	For the other direction, note that $L_{w}$ could be defined by the completion of $L$ with the valuation $w$.
	So for each $g\in G_{x}\cap G_1$, it fixes all the $(L,w)$-Cauchy sequences, hence fixing the field $L_{w}$.
	This first implies that
	\begin{equation*}
		[L_{w}:K_{v}]=[G_{x}:G_{x}\cap G_1].
	\end{equation*}
	By definition of the inertia subgroup, the short exact sequence
	\begin{equation*}
		1\to I_{x}\to G_{x}\to G(\kappa_{x}/\kappa_{v})\to 1
	\end{equation*}
	induces the short exact sequence
	\begin{equation*}
		1\to  I_{x}\cap G_1\to G_{x}\cap G_1\to G(\kappa_{x}/\kappa_{w})\to 1.
	\end{equation*}
	In other words, we have
	\begin{equation*}
		e(w/v)=[I_{x}:I_{x}\cap G_1].
	\end{equation*}
	Then let us translate it into the context of permutation groups.
	The right cosets $G_1\backslash G$ form a right $G$-set, i.e., $(G_1\sigma)\cdot g\mapsto G_1(\sigma g)$.
	And the $G$-action induces an injective homomorphism $\varphi:G\to S_n$ which preserves $G_1$, i.e., $\varphi(G_1)=G_1$.
	So we see that $G$ and $\varphi(G)$ are both transitive and their actions on $\{1,\dots,n\}$ are determined by $\operatorname{Stab}_G(1)=G_1=\operatorname{Stab}_{\varphi(G)}(1)$, 
    which implies that $G$ and $\varphi(G)$ together with their actions on $\{1,\dots,n\}$ are permutation isomorphic.
		
	So the group index $[G_{x}:G_{x}\cap G_1]$ is the same thing as $\#  G_{x}\cdot1$,
	i.e., this number is independent of such a map $\varphi:G\to S_n$ that preserves $G_1$.
	And similarly we have that $e(w/v)=\# I_{x}\cdot 1$.
	This means that the size of the orbit of $1$ under the action of $G_{x}$ is the degree $[L_{w}:K_{v}]$, and the ramification index is given by the size of the orbit induced by $I_{x}$.
    
    \emph{Proof of (3)}.    
    Finally, let us prove the identity
	\begin{equation*}
		e(v,L/K)=\gcd(\# I_{v}\cdot i)_{i=1}^n.
	\end{equation*}
	Note that the double cosets are in one-to-one correspondence with the orbits $\{(G_1\sigma)\cdot G_{x}\mid\sigma\in G\}$.
	So we also have a bijective map $\psi:\{G_{x}\cdot i\mid1\leq i\leq n\}\to\mathcal{P}_{v}(L)$.
	Note that the number of \emph{different} orbits may be less than $n$ in general.
	Fix some $1\leq i\leq n$.
        Define $w':=\psi(G_{x}\cdot i)$, and let $x'$ be a prime of $N$ that lies above it.
	By applying the method in the proof of (2) to $K_{v}\subseteq L_{w'}\subseteq N_{x'}$, we see that 
	\begin{equation*}
		[L_{w'}:K_{v}]=\# G_{x'}\cdot 1\quad\text{and}\quad e(w'/v)=\# I_{x'}\cdot 1.
	\end{equation*}	
	Clearly $G_{x}$ and $G_{x'}$ are conjugate in $G$.
	So there exists some $\sigma\in G$ such that $G_{x'}=\sigma G_{x}\sigma^{-1}$.
	Note that our $G$-action on $G_1\backslash G$ is on the right.
        By identifying the stabilizers $\{G_1,\dots,G_n\}$ and the right cosets $G_1\backslash G$ and $n$ objects in general $\{1,\dots,n\}$ as $G$-sets, we have 
	\begin{equation*}
		\sigma^{-1}G_1\sigma=G_1\cdot\sigma=\sigma\cdot 1.
	\end{equation*}
	Since $G_1\sigma G_{x}\mapsto\sigma x\vert_L=w'$, we see that $\sigma^{-1}G_1\sigma=G_i$, the stabilizer of $i$.
	So we have that
	\begin{equation*}
		\begin{aligned}
			[G_{x'}:G_{x'}\cap G_1]
			=&[\sigma G_{x}\sigma^{-1}:\sigma G_{x}\sigma^{-1}\cap G_1]
			\\
			=&[G_{x}:G_{x}\cap G_i]
			=\# G_{x}\cdot i.
		\end{aligned}		
	\end{equation*}
	Similar method shows that $e(w'/v)=\# I_{x}\cdot i$.
	The choice of $x\mid v$ only affects the labeling.
	So we finally reach the identity
	\begin{equation*}
		e(v,L/K)=\gcd(e(w/v))_{w\mid v}=\gcd(\# I_{v}\cdot i)_{i=1}^n.
	\end{equation*}
\end{proof}
Lemma~\ref{lemma: ramification of primes in general} provides a method to pick up a suitable subset $\Omega$ of $G$ that satisfies the condition of Theorem~\ref{thm: 0-prob and infty-moment}.
Or we could say that the lemma allows us to prescribe the ramification of primes in an extension by the language of group theory.
Let us use an example to show it.
\begin{example}
	Let $p$ be a finite rational prime and let $l$ be a non-negative integer as in Theorem~\ref{thm: 0-prob and infty-moment}.
	Consider the set
	\begin{equation*}
		\Omega_{p^{l+1}}:=\{g\in G\mid\gcd(\#\langle g\rangle\cdot i)_{i=1}^n\equiv0\bmod{p^{l+1}}\}.
	\end{equation*}
	Clearly it is closed under invertible powering and conjugation, and $e_G\notin\Omega_{p^{l+1}}$.
	Assume that $\Omega_{p^{l+1}}$ is nonempty in the rest of this example.
	If $\mathfrak{p}$ is tamely ramified in $L/K$ such that its tame inertia generator $y_{\mathfrak{p}}$ is contained in $\Omega_{p^{l+1}}$, then we have that
	\begin{equation*}
		e(\mathfrak{p},L/K)=\gcd(\#\langle y_{\mathfrak{p}}\rangle\cdot i)_{i=1}^n\equiv0\bmod{p^{l+1}}.
	\end{equation*}
	So our construction $\Omega_{p^{l+1}}$ with the nonempty condition is an example that satisfies the condition of Theorem~\ref{thm: 0-prob and infty-moment}.
	To be precise, let $T$ be the set of principal prime ideals $\mathfrak{p}$ of $K$ such that $\mathfrak{p}\nmid\lvert G\rvert$.
	By Corollary~\ref{cor: estimate of rel class group}, there exists some constant $c>0$ such that for all $L\in\mathcal{E}(G)$, we have
	\begin{equation*}
		\begin{aligned}
			\operatorname{rk}_p p^l\operatorname{Cl}(L/K)
			\geq&\#\{\mathfrak{p}\in T\mid e(\mathfrak{p},L/K)\equiv0\bmod{p^{l+1}}\}-c
			\\
			\geq&\#\mathcal{P}_{T}(\Omega_{p^{l+1}},L)-c.
		\end{aligned}		
	\end{equation*}	
	For a more concrete one, let $G:=S_3$, and $L/\mathbb{Q}$ be a non-Galois cubic field.
	Then for any rational prime $p\nmid6\infty$, we have
	\begin{equation*}
		e(p,L/\mathbb{Q})\equiv0\bmod{3}\iff I_p=\langle(123)\rangle.
	\end{equation*}
	Let $\Omega:=\{(123),(132)\}$, which is just $\Omega_3$ in this case.
	By Theorem~\ref{thm: estimate of class group}, there exists some constant $c>0$ such that for all non-Galois cubic field $L$, we have
	\begin{equation*}
		\operatorname{rk}_3\operatorname{Cl}_L\geq\#\mathcal{P}_{\mathbb{Q}}(\Omega,L)-c.
	\end{equation*}
\end{example}
The main results of this section are related to the concept called ``ambiguous ideal classes''.
The study of genus theory (e.g. Ishida~\cite{ishida1976genus}) could also be viewed as part of this topic.
For us, these results provide an algebraic preliminary for the statistical results, and we will start with the fundamental set-ups in the next section.

\section{Local specifications}\label{sec: specifications}
In this section, we establish the fundamental notions for our proof of the main results.
Fix a number field $K$, i.e., the base field of this section.
For each prime $\mathfrak{p}$ of $K$, let $K_{\mathfrak{p}}$ be the completion of $K$ at $\mathfrak{p}$.
Define
\begin{equation*}
	\begin{aligned}
		U_{\mathfrak{p}}:=\left\{
		\begin{aligned}
			&\{x\in K_{\mathfrak{p}}:\lvert x\rvert_{\mathfrak{p}}=1\}\quad&\text{if }\mathfrak{p}\text{ is finite};
			\\
			&\mathbb{R}_+^*\quad&\text{if }\mathfrak{p}\text{ is infinite real};
			\\
			&\mathbb{C}^*\quad&\text{if }\mathfrak{p}\text{ is infinite complex}.
		\end{aligned}\right.
	\end{aligned}
\end{equation*}
In other words, $U_{\mathfrak{p}}$ is the unit group of $K_{\mathfrak{p}}$.
Let $J_K$ be the group of id{\`e}les of $K$, and $\operatorname{C}_K$ the id{\`e}les class group.
Let $S$ be a finite set of primes containing the ones at infinity.
Define
\begin{equation*}
	J_K^{S}:=\prod_{\mathfrak{p}\in S}K_{\mathfrak{p}}^*\times\prod_{\mathfrak{p}\notin S}U_{\mathfrak{p}}
\end{equation*} 
be the $S$-id{\`e}les.
Define $K^S:=J_K^S\cap K^*$ to be the $S$-units of $K$.

Let $G$ be a finite group, and $F$ be a fixed field.
An {\'e}tale $F$-algebra $A$ of degree $\lvert G\rvert$ is called a $G$-structured $F$-algebra $A$ if there is an inclusion $\varphi_A:G\hookrightarrow\operatorname{Aut}_F(A)$ such that $G$ acts transitively on the idempotents of $A$.
If $A$ and $B$ are two $G$-structured $F$-algebras, then an isomorphism $f:A\to B$ is an isomorphism of $F$-algebras such that the induced map $f_*:\operatorname{Aut}_F(A)\to\operatorname{Aut}_F(B)$ restricted to $G$ is the identity map, i.e.,
for each $g\in G$, we have 
\begin{equation*}
	f_*(\varphi_A(g))=\varphi_B(g).
\end{equation*}
The most important example to keep in mind is the following.
If $M/K_{\mathfrak{p}}$ is an extension of local fields with an injective map $G(M/K_{\mathfrak{p}})\hookrightarrow G$, then $\Sigma_{\mathfrak{p}}:=\operatorname{Ind}^G_H M$ is a $G$-structured $K_{\mathfrak{p}}$-algebra, where $H$ is the image of $G(M/K_{\mathfrak{p}})$ in $G$.
Roughly speaking, this is another way to state the local-global principle.

If $F$ is a local field, then we define the decomposition group and the inertia subgroup of $A$ over $F$ (up to conjugate) as follows.
For a fixed centrally primitive idempotent $e$ of $A$, the field $eA$ corresponds to a field extension $eA/F$ with the corresponding extension of valuations $w/v$.
So, if $g\in G$ fixes the component $eA$, then it is clearly an element in the decomposition group $G_{w}\subseteq G$.
It also admits the inertia group $I_{w}$ which comes from the exact sequence
\begin{equation*}
	1\to I_{w}\to G_{w}\to G(\lambda/\kappa)\to1,
\end{equation*}
where $\lambda$, resp. $\kappa$, is the residue field of $eA$, resp. $F$.
\begin{definition}
	Let $K$ be a number field, and $G$ be a finite group.
	\begin{enumerate}
		\item Fix a prime $\mathfrak{p}$ of $K$.
        Define a specification at $\mathfrak{p}$ (with respect to $G$) to be a $G$-structured $K_{\mathfrak{p}}$-algebra $\Sigma_{\mathfrak{p}}$ which is considered up to isomorphism, such that there exists a subgroup $H\subseteq G$ and an $H$-extension $F/K_{\mathfrak{p}}$ with $\operatorname{Ind}^G_H F\cong\Sigma_{\mathfrak{p}}$.
		We say that a $G$-extension $L/K$ satisfies the specification $\Sigma_{\mathfrak{p}}$ at $\mathfrak{p}$ if $L_{\mathfrak{p}}=L\otimes K_{\mathfrak{p}}\cong\Sigma_{\mathfrak{p}}$.
		\item Define a local specification of $K$ (with respect to $G$) to be a collection of specifications $\Sigma=(\Sigma_{\mathfrak{p}})_{\mathfrak{p}}$ such that for all but finitely many primes the specification $\Sigma_{\mathfrak{p}}$ is unramified.
		We say that a $G$-extension $L/K$ satisfies the local specification $\Sigma$ if for each prime $\mathfrak{p}$ the extension $L/K$ satisfies the specification $\Sigma_{\mathfrak{p}}$.
	\end{enumerate}
\end{definition}
\begin{remark}
	Note that a $G$-structured algebra $\Sigma_{\mathfrak{p}}$ over $K_{\mathfrak{p}}$ is an {\'e}tale algebra.
	Its ramification is well-defined via its components.
	In particular, since it admits a transitive $G$-action, the ramification could be defined using a single component.
	For example, if $F$ is one of the component and $F/K_{\mathfrak{p}}$ is tamely ramified, then every component is tamely ramified over $K_{\mathfrak{p}}$, and we can talk about the inertia generator (up to conjugate).
\end{remark}
When $G$ is abelian, specifications at a prime $\mathfrak{p}$ are literally \emph{all} the $G$-structured $K_{\mathfrak{p}}$-algebras by the following result.
\begin{lemma}\label{lemma: Galois representation}
	If $G$ is a finite abelian group, then we have the one-to-one correspondence between the following two sets
	\begin{equation*}
		\{\text{isomorphism classes of }G\text{-structured }F\text{-algebra}\}\leftrightarrow\operatorname{Hom}(G_F,G).
	\end{equation*}
\end{lemma}
\begin{proof}		
	See Wood~\cite[Lemma 2.6]{wood2010probabilities} for example.
\end{proof}
Note that when $G$ is finite abelian, locally we have the one-to-one correspondence between $\operatorname{Hom}(G_{K_{\mathfrak{p}}},G)$ and $\operatorname{Hom}(K_{\mathfrak{p}}^*,G)$.
This means that every specification at $\mathfrak{p}$ corresponds to a continuous map $\chi:K_{\mathfrak{p}}^*\to G$ and vice versa.
Globally we can replace $K_{\mathfrak{p}}^*$ by the id{\`e}les class group $\operatorname{C}_K$, and the surjective maps $\operatorname{C}_K\to G$ correspond exactly to the $G$-extensions $L/K$.

Let us give the definition of a counting function based on local specifications.
\begin{definition}\label{def: product of ramified primes}
	Let $G$ be a finite group.
	Define $c_G:G\to\mathbb{R}_{\geq0}$ to be a function as follows.
	\begin{enumerate}
		\item $c_G(g)=0$ if and only if $g$ is the identity of $G$;
		\item for each $g,h\in G$, for each integer $a$ that is relatively prime to the order of $g$, we have that $c_G(g)=c_G(hg^ah^{-1})$.
	\end{enumerate} 
	For a prime $\mathfrak{p}\mid\lvert G\rvert\infty$ of $K$, let $c_{\mathfrak{p}}:\{$ specifications at $\mathfrak{p}\}\to\mathbb{R}_{\geq0}$ be any function.
	For each $\mathfrak{p}$ of $K$, define the function $c:\{$ specifications at $\mathfrak{p}\}\to\mathbb{R}_{\geq0}$ with respect to $c_G$ and $\{c_{\mathfrak{p}}\}_{\mathfrak{p}\mid\lvert G\rvert\infty}$ as follows:
	\begin{equation*}
		c(\Sigma_{\mathfrak{p}})=\left\{
		\begin{aligned}
			&c_G(y_{\mathfrak{p}})\quad&\text{if }\mathfrak{p}\nmid\lvert G\rvert\infty
			\\
			&&\text{and }y_{\mathfrak{p}}\text{ is a inertia generator}.
			\\
			&c_{\mathfrak{p}}(\Sigma_{\mathfrak{p}})\quad&\text{otherwise.}
		\end{aligned}\right.
	\end{equation*}
	If $L/K$ is a $G$-extension, then define the counting function $C$ with respect to $c_G$ and $\{c_{\mathfrak{p}}\}_{\mathfrak{p}\mid\lvert G\rvert\infty}$ by the expression
	\begin{equation*}
		C(L):=\prod_{\mathfrak{p}}\mathfrak{Np}^{c(L\otimes K_{\mathfrak{p}})}.
	\end{equation*}
	Here $\mathfrak{Np}$ is the absolute norm for finite prime $\mathfrak{p}$, and set $\mathfrak{Np}:=1$ if $\mathfrak{p}\mid\infty$.
\end{definition}
See also Wood~\cite[Section 2]{wood2010probabilities}.
We can generalize the counting function $C$ to continuous homomorphisms $C:J_K^S\to G$ when $G$ is finite abelian by the following steps.
\begin{lemma}\label{lemma: extension of local map}
    Let $G$ be a finite group.
    For each prime $\mathfrak{p}$ of $K$, and for each continuous homomorphism $\chi_{\mathfrak{p}}:U_{\mathfrak{p}}\to G$, there exists some continuous homomorphism $\tilde{\chi}_{\mathfrak{p}}:K_{\mathfrak{p}}^*\to G$ that extends $\chi_{\mathfrak{p}}$.
\end{lemma}
\begin{proof}
	Using the fact that
	\begin{equation*}
		K_{\mathfrak{p}}^*\cong U_{\mathfrak{p}}\times\mathbb{Z}
	\end{equation*}
	both algebraically and topologically, we see that the required map $\tilde{\chi}_{\mathfrak{p}}$ simply comes from a choice of $\mathbb{Z}\to G$.
\end{proof}
When $G$ is also abelian, another way to state this lemma is that for each continuous map $\chi_{\mathfrak{p}}:U_{\mathfrak{p}}\to G$, there exists an algebra $\Sigma_{\mathfrak{p}}$ such that the corresponding map $K_{\mathfrak{p}}^*\to G$ extends $\chi_{\mathfrak{p}}$.
Of course, such an algebra may not be unique.
\begin{lemma}\label{lemma: product of ramified primes}
    Let $G$ be a finite abelian group.
    Fix a number field $K$ with a finite set $S$ of primes containing the ones at $\lvert G\rvert\infty$.
    Given functions $c_G$ and $\{c_{\mathfrak{p}}\}_{\mathfrak{p}\mid\lvert G\rvert\infty}$ as in Definition~\ref{def: product of ramified primes}, there exists a counting function $C$ defined for continuous homomorphisms $\chi:J_K^S\to G$, such that if $L/K$ is an abelian $G$-extension corresponding to the Artin reciprocity map $\tilde{\chi}:\operatorname{C}_K\to G$, then
	\begin{equation*}
		C(L)=C(\chi),
	\end{equation*}
	where $\chi:J_K^S\to G$ is induced by $\tilde{\chi}$.
\end{lemma}
\begin{proof}
	For each $\mathfrak{p}\mid\lvert G\rvert\infty$, and for each local map $\chi_{\mathfrak{p}}:K_{\mathfrak{p}}^*\to G$, we have the function
	\begin{equation*}
		c(\chi_{\mathfrak{p}}):=c(\Sigma_{\mathfrak{p}})
	\end{equation*}
	where $\Sigma_{\mathfrak{p}}$ is the corresponding $G$-structured $K_{\mathfrak{p}}$-algebra.
	
	On the other hand, the inertia subgroup of $\Sigma_{\mathfrak{p}}/K_{\mathfrak{p}}$ is determined by the restriction $\chi_{\mathfrak{p}}:U_{\mathfrak{p}}\to G$.
        Together with Lemma~\ref{lemma: extension of local map}, we see that the function $c$ is actually well-defined for $\operatorname{Hom}(U_{\mathfrak{p}},G)$ for all $\mathfrak{p}\nmid\lvert G\rvert\infty$.
	And we can define
	\begin{equation*}
		C(\chi):=\prod_{\mathfrak{p}}\mathfrak{Np}^{c(\chi_{\mathfrak{p}})}.
	\end{equation*}
	It coincides with $C(L)$ if $\chi$ is induced by $L$, for they give the same local factors.
\end{proof}
In our paper, we mainly consider ``specifications defined globally''.
For example, we can consider quadratic number fields with exactly $\gamma$ ramified primes, where $\gamma$ is a non-negative integer.
\begin{definition}\label{def: specification}
	Let $G$ be a finite group.
	Let $T$ be a (possibly infinite) set of primes of the number field $K$.
	\begin{enumerate}
		\item A specification at $T$ is defined to be a collection of specifications $\Sigma=(\Sigma_{\mathfrak{p}})_{\mathfrak{p}\in T}$, such that for all but finitely many primes $\mathfrak{p}\in T$ the specification $\Sigma_{\mathfrak{p}}$ is unramified.
		\item Let $L/K$ be a $G$-extension.
		We say that $L$ satisfies the specification $\Sigma$ at $T$, denoted by $L\sim\Sigma$, 
		if $L_{\mathfrak{p}}\cong\Sigma_{\mathfrak{p}}$ for all $\mathfrak{p}\in T$.	
		\item Let $\Phi$ be a set of specifications at $T$.
		We write $L\in\Phi$ if there exists some $\Sigma\in\Phi$ such that $L\sim\Sigma$.
	\end{enumerate}	
\end{definition}
The definition may not work well for number fields, especially when $T$ is infinite.
For example, for each $p\in\mathfrak{P}$, let $\Sigma_p$ be some unramified local specification, and define $\Sigma:=\prod_{p\in\mathcal{P}}\Sigma_p$.
Clearly when $G$ is nontrivial, there is no $G$-extension $L/\mathbb{Q}$ that satisfies $\Sigma$.

Then we generalize it to the ``specifications of morphisms''.
Let $\chi:J_K^S\to G$ be a homomorphism.
For each $\mathfrak{p}\notin S$, the information of $\chi_{\mathfrak{p}}:U_{\mathfrak{p}}\to G$ is not enough to give a fixed $G$-structured $K_{\mathfrak{p}}$-algebra.
So, if we want a notation of ``specifications for morphisms'', then the idea is to consider only the set of specifications ``defined by the inertia generators''.
\begin{definition}\label{def: specification of a morphism}
    Let $G$ be a finite abelian group.
    Fix a number field $K$ with a finite set $S$ of primes containing the ones at $\lvert G\rvert\infty$.
	\begin{enumerate}
		\item For each prime $\mathfrak{p}\nmid\lvert G\rvert\infty$, let $\Phi(\mathfrak{p})$ be a set of specifications at $\mathfrak{p}$.
		We say that $\Phi(\mathfrak{p})$ is defined by inertia generators if for each specifications $\Sigma_{\mathfrak{p},1}$ and $\Sigma_{\mathfrak{p},2}$ at $\mathfrak{p}$ with the same inertia subgroup we have $\Sigma_{\mathfrak{p},1}\in\Phi(\mathfrak{p})$ if and only if $\Sigma_{\mathfrak{p},2}\in\Phi(\mathfrak{p})$.
		\item Let $T$ be a set of primes of $K$.
		For each prime $\mathfrak{p}\in T$, let $\Phi(\mathfrak{p})$ be a set of specifications at $\mathfrak{p}$ and let
		\begin{equation*}
			\begin{aligned}
				\Phi:=&\{\Sigma=(\Sigma_{\mathfrak{p}})_{\mathfrak{p}\in T}\mid\Sigma_{\mathfrak{p}}\in\Phi(\mathfrak{p})
				\text{ and }\Sigma\text{ is a specification at }T\}
				\\
				=&\sideset{}{'}{\prod}_{\mathfrak{p}\in T}\Phi(\mathfrak{p}),
			\end{aligned}			
		\end{equation*}
		where $\prod'$ means the restricted product whose restriction is given by the set of unramified specifications at $\mathfrak{p}$.
        We say that $\Phi$ is defined by inertia generators if for each $\mathfrak{p}\in T\backslash S$ the set $\Phi(\mathfrak{p})$ is defined by inertia generators.
		\item Let $\Phi=\sideset{}{'}{\prod}_{\mathfrak{p}\in T}\Phi(\mathfrak{p})$ be the set of specifications at $T$ that is defined by inertia generators.
		For each map $\chi:J_K^S\to G$, we say that $\chi$ satisfies $\Phi$, denoted by $\chi\in\Phi$, if for each $\mathfrak{p}$ and for each algebra $\tilde{\Sigma}_{\mathfrak{p}}$ whose local map extends $\chi_{\mathfrak{p}}$, we have that $\tilde{\Sigma}_{\mathfrak{p}}\in\Phi(\mathfrak{p})$.
	\end{enumerate}	
\end{definition}
It is clear that if $L/K$ is an abelian $G$-extension corresponding to the map $\chi:\operatorname{C}_K\to G$, then the induced map $J_K^S\to G$ satisfies $\Phi$ if and only if $L\in\Phi$, where $\Phi$ is a set of specifications defined by inertia generators.
So, this is indeed a generalization of the specifications for abelian extensions of $K$ to the morphisms $J_K^S\to G$.

We give a description of the ``specifications defined globally'' showing up in Hypothesis~\ref{hypothesis: comparison between field counting results} in terms of the definitions above.
Let $G$ be a transitive permutation group, and $\Omega$ be a nonempty subset of $G$ that is closed under conjugation and invertible powering.
Fix a number field $K$, define $\mathcal{E}$ as a subset of $\mathcal{E}(G;K)$.
Let $S$ be a finite set of primes of $K$ containing the ones at $\lvert G\rvert\infty$,
and $T\subseteq\mathcal{P}_K$ be a set of prime ideals.
Recall from Definition~\ref{def: specifications in general} that for each non-negative integer $\gamma$, we have defined the notation $\mathcal{E}_{\Omega,\gamma}^{T}$.
\begin{definition}\label{def: square-free ideals}
	Let $\mathfrak{a}$ be an integral ideal of $K$, i.e., $\mathfrak{a}\in I_K^+$.
	\begin{enumerate}
		\item We say that $\mathfrak{a}$ is square-free if $\mathfrak{p}^2\nmid\mathfrak{a}$ for each finite prime $\mathfrak{p}$, i.e., $\mu(\mathfrak{a})\neq0$ where $\mu$ is the M{\"o}bius function.
		\item Define $\omega(\mathfrak{a})$ to be the number of distinct prime factors of $\mathfrak{a}$.
		\item For a set $T$ of prime ideals, define
		\begin{equation*}
			I^{+,\mu}_T:=\{\mathfrak{a}\in I_K^+:\mu(\mathfrak{a})\neq0\text{ and }\mathfrak{p}\mid\mathfrak{a}\Rightarrow\mathfrak{p}\in T\}.
		\end{equation*}   
	\end{enumerate}
\end{definition}
Using these notations, we can express $\mathcal{E}_{\Omega,\gamma}^{T}$ in terms of local specifications.
\begin{proposition}\label{prop: translation between local and global specifications}
	Fix a non-negative integer $\gamma$.
	For each prime $\mathfrak{p}\in T$, define
	\begin{equation*}
		\Omega^{+}_{\mathfrak{p}}:=\{\Sigma_{\mathfrak{p}}\mid y_{\mathfrak{p}}\in\Omega\}
		\quad\text{and}\quad
		\Omega^{-}_{\mathfrak{p}}:=\{\Sigma_{\mathfrak{p}}\mid y_{\mathfrak{p}}\notin\Omega\},
	\end{equation*}
	where $y_{\mathfrak{p}}$ is a generator of the inertia subgroup of $\Sigma_{\mathfrak{p}}$.
	For each $\mathfrak{a}\in I^{+,\mu}_T$, define
	\begin{equation*}
		\Phi(\mathfrak{a}):=\prod_{\mathfrak{p}\mid\mathfrak{a}}\Omega^{+}_{\mathfrak{p}}\times\sideset{}{'}{\prod}_{
			\substack{
				\mathfrak{p}\in T\\
				\mathfrak{p}\nmid\mathfrak{a}
			}
		}\Omega^{-}_{\mathfrak{p}}.
	\end{equation*}
	We have
	\begin{equation}\label{eqn: translation between local and global specifications}
		\mathcal{E}_{\Omega,\gamma}^{T}=
		\bigsqcup_{
			\substack{
				\mathfrak{a}\in I^{+,\mu}_T\\
				\omega(\mathfrak{a})=\gamma
			}
		}\{L\in\mathcal{E}\mid L\in\Phi(\mathfrak{a})\}.
	\end{equation}	
\end{proposition}
\begin{proof}
	By definition, for any $G$-extension $L/K$, the inertia generator of $\mathfrak{p}$ is contained in $\Omega$ if and only if $L_{\mathfrak{p}}\in\Omega^{+}_{\mathfrak{p}}$.
	This implies that if $L$ is contained in one of the sets on the right-hand side of (\ref{eqn: translation between local and global specifications}), then it must be contained in $\mathcal{E}_{\Omega,\gamma}^{T}$.
	
	By definition, if $L\in\mathcal{E}_{\Omega,\gamma}^{T}$, then there are exactly $\gamma$ primes $\mathfrak{p}_1,\dots,\mathfrak{p}_{\gamma}$ such that $\mathfrak{p}_i\in T$ and that the inertia generator of $L_{\mathfrak{p}_i}/K_{\mathfrak{p}_i}$ (up to conjugate) is contained in $\Omega$.
	Define $\mathfrak{a}:=\mathfrak{p}_1\cdots\mathfrak{p}_{\gamma}$.
	And we see that $\mathfrak{a}\in I^{+,\mu}_T$, and $\omega(\mathfrak{a})=\gamma$, and $L\in\Phi(\mathfrak{a})$.
	
	It suffices to show that the union is disjoint.
        When $\gamma=0$, we see that the right-hand side becomes a single set.
	So we assume without loss of generality that $\gamma>0$ and let $\mathfrak{a}$ and $\mathfrak{b}$ be two different integrals of $ I^{+,\mu}_T$ with $\omega(\mathfrak{a})=\gamma=\omega(\mathfrak{b})$ such that
	\begin{equation*}
		\{L\in\mathcal{E}\mid L\in\Phi(\mathfrak{a})\}
		\quad\text{and}\quad
		\{L\in\mathcal{E}\mid L\in\Phi(\mathfrak{b})\}
	\end{equation*}
	are nonempty.
	Then first of all, there exists some $\mathfrak{p}\in T$ such that $\mathfrak{p}\mid\mathfrak{a}$ but $\mathfrak{p}\nmid\mathfrak{b}$.
	Second, if $L_1\in\Phi(\mathfrak{a})$ and $L_2\in\Phi(\mathfrak{b})$, then $L_{1,\mathfrak{p}}$ has inertia generator contained in $\Omega$ while the inertia generator of $L_{2,\mathfrak{p}}$ is contained in $G\backslash\Omega$.
	So $L_{1}\neq L_2$ belong to different isomorphism classes of field extensions over $K$.
	This shows that the union on the right-hand side of (\ref{eqn: translation between local and global specifications}) must be disjoint.
\end{proof}
Let us conclude this section by some applications of these notations.
\begin{lemma}\label{lemma: Hom(J_K,G)}
    Let $G$ be a finite abelian group.
    Fix a number field $K$ with a finite set $S$ of primes containing the ones at $\lvert G\rvert\infty$.
    We have that
	\begin{equation*}
		\operatorname{Hom}(J_K^S,G)\cong\bigoplus_{\mathfrak{p}\in S}\operatorname{Hom}(K_{\mathfrak{p}}^*,G)+\bigoplus_{\mathfrak{p}\notin S}\operatorname{Hom}(U_{\mathfrak{p}},G).
	\end{equation*}
\end{lemma}
\begin{proof}
	For topological groups $A$ and $B$, the notation $\operatorname{Hom}(A,B)$ always means the group of continuous homomorphisms.
	So for any (continuous) homomorphism $\chi:J_K^S\to G$, we see that the kernel $\ker\chi$ must be both open and closed.
	
	Claim: given local continuous homomorphisms $\chi_{\mathfrak{p}}:U_{\mathfrak{p}}\to G$ for all $\mathfrak{p}\notin S$ such that $\chi_{\mathfrak{p}}$ is trivial for all but finitely many primes, and continuous homomorphisms $\chi:K_{\mathfrak{p}}^*\to G$ for all $\mathfrak{p}\in S$, the product
	\begin{equation*}
		\chi:=\prod_{\mathfrak{p}\in S}\chi_{\mathfrak{p}}\times\prod_{\mathfrak{p}\notin S}\chi_{\mathfrak{p}}
	\end{equation*}
	is a morphism of topological groups.
	It suffices to show that $\ker\chi$ is open.
	For a fixed $\mathfrak{p}$, the kernel $\ker\chi_{\mathfrak{p}}$ is an open and closed subgroup of $U_{\mathfrak{p}}$ or $K_{\mathfrak{p}}^*$.
	And $\ker\chi_{\mathfrak{p}}=U_{\mathfrak{p}}$ for all but finitely many $\mathfrak{p}$.
	So if $x=(x_{\mathfrak{p}})_{\mathfrak{p}}\in J_K^S$ is contained in the kernel of $\chi$, then
	\begin{equation*}
		\prod_{\mathfrak{p}}x_{\mathfrak{p}}\ker\chi_{\mathfrak{p}}
	\end{equation*}
	is an open neighbourhood of $x$ in $J_K^S$ and is included in $\ker\chi$.
	This shows that $\ker\chi$ is open.
	
	Conversely, if $\chi:J_K^S\to G$ is continuous, then there exists a finite set of primes $S_0\subseteq\mathcal{P}(K)\backslash S$ such that
	\begin{equation*}
		\prod_{\mathfrak{p}\in S}W_{\mathfrak{p}}\times\prod_{\mathfrak{p}\in S_0}V_{\mathfrak{p}}\times\prod_{\mathfrak{p}\notin S\cup S_0}U_{\mathfrak{p}}\subseteq\ker\chi,
	\end{equation*}
	where $W_{\mathfrak{p}}$ is an open and closed neighbourhood of $1$ in $K_{\mathfrak{p}}^*$ and $V_{\mathfrak{p}}$ is an open and closed neighbourhood of $1$ in $U_{\mathfrak{p}}$.
	This implies that $\chi$ factors through the quotient $\prod_{\mathfrak{p}\in S}K_{\mathfrak{p}}^*\times\prod_{\mathfrak{p}\in T}U_{\mathfrak{p}}$, which reduces to the finite product of the morphisms $\chi_{\mathfrak{p}}$ where $\mathfrak{p}$ runs over primes in $S\cup S_0$.
	So $\chi$ is actually an element of the restricted product of $\operatorname{Hom}(K_{\mathfrak{p}}^*,G)$ and $\operatorname{Hom}(U_{\mathfrak{p}},G)$ with respect to the trivial map, where $\mathfrak{p}$ runs over all primes of $K$,
	i.e., the direct sum.
	And we are done for the proof.
\end{proof}
Based on the structure of $\operatorname{Hom}(J_K^S,G)$, we abstractly obtain a generating series as follows.
\begin{proposition}\label{prop: unique factorization of ideals}
    Let $C$ be a counting function defined as in Definition~\ref{def: product of ramified primes}, and $S$ a finite set of primes containing the ones at $\lvert G\rvert\infty$.
    Define $f$ to be a complex-valued function defined on morphisms $\chi:J_K^S\to G$ such that $f(\chi)=1$ if $\chi$ is trivial and
	\begin{equation*}
		f(\chi)=\prod_{\mathfrak{p}}f(\chi_{\mathfrak{p}}).
	\end{equation*}
    Let $T$ be a set of primes.
    For each $\mathfrak{p}\in T$, let $\Phi(\mathfrak{p})$ be a set of local specifications at $\mathfrak{p}$.
    Define $\Phi:=\prod'_{\mathfrak{p}\in T}\Phi(\mathfrak{p})$ as the restricted product with respect to unramified specifications.
    If $\Phi$ is defined by inertia generators, then we have
	\begin{equation*}
		\sum_{
			\substack{
				\chi:J_K^S\to G\\
				\chi\in\Phi
			}
		}f(\chi)C(\chi)^{-s}
		=\prod_{\mathfrak{p}\in T}
		\Bigl(
		\sum_{
			\substack{
				\chi_{\mathfrak{p}}:U_{\mathfrak{p}}\to G\\
				\chi\in\Phi(\mathfrak{p})
			}
		}
		f(\chi_{\mathfrak{p}})(\mathfrak{Np})^{-c(\chi_{\mathfrak{p}})s}
		\Bigr)
		\prod_{\mathfrak{p}\notin T}
		\Bigl(
		\sum_{
			\substack{
				\chi_{\mathfrak{p}}:W_{\mathfrak{p}}\to G\\
				\chi\notin\Phi(\mathfrak{p})
			}
		}f(\chi_{\mathfrak{p}})\mathfrak{Np}^{-c(\chi_{\mathfrak{p}})s}
		\Bigr)
		,
	\end{equation*}
	where $W_{\mathfrak{p}}=U_{\mathfrak{p}}$ if $\mathfrak{p}\notin S$ and $W_{\mathfrak{p}}=K_{\mathfrak{p}}^*$ otherwise.
\end{proposition}
\begin{proof}
	Since the identity holds in the sense of the Riemann sum, by Lemma~\ref{lemma: Hom(J_K,G)}, this reduces to the fact that $C$ and $f$ are multiplicative with respect to the group of ideals of $K$.
\end{proof}
This statement also explains why we prefer the morphisms $\chi:J_K^S\to G$.
In summary, we give a definition of local specification in terms of $G$-structured algebra and apply it to the morphisms $\chi:J_K^S\to G$ so that we can construct generating series for them.
In the next section, we are going to study the analytic properties of such series.

\section{Analytic continuation for generating series}\label{sec: generating series}
In this section we present some analytic results related to the generating series used in this paper.
Throughout the section fix a number field $K$.
Let us introduce the notion called ray class group.
See also Neukirch~\cite[VI.1.7]{neukirch2013algebraic}.
Let $\mathfrak{m}$ be a module (an integral ideal) of $K$.
Define $I_K^{\mathfrak{m}}$ as the fractional ideals that are relatively prime to $\mathfrak{m}$, and $P_K^{\mathfrak{m}}$ as the group of all principal ideals $(a)\in P_K$ such that
\begin{equation*}
	a\text{ totally positive}\quad\text{and}\quad a\equiv1\bmod{\mathfrak{m}}.
\end{equation*}
The latter condition means that there exist integers $b,c\in\mathscr{O}_K^*$ that are relatively prime to $\mathfrak{m}$ such that $a=b/c$ and $b\equiv c\bmod{\mathfrak{m}}$.
The ray class group mod $\mathfrak{m}$, denoted by $\operatorname{Cl}_K^{\mathfrak{m}}$, is the quotient $I_K^{\mathfrak{m}}/P_K^{\mathfrak{m}}$.
Let $h_{\mathfrak{m}}:=\lvert\operatorname{Cl}_K^{\mathfrak{m}}\rvert$ be the size of the ray class group.

Using the language of id{\`e}les, we can give an equivalent definition for the ray class group $\operatorname{Cl}_K^{\mathfrak{m}}$.
For each prime $\mathfrak{p}$, we define the higher unit groups as follows.
First, set $U_{\mathfrak{p}}^{(0)}:=U_{\mathfrak{p}}$ (see also the beginning of Section~\ref{sec: specifications}).
Then, for each positive integer $n$, define
\begin{equation*}
	U_{\mathfrak{p}}^{(n)}:=\left\{\begin{aligned}
		&1+\mathfrak{p}^n\quad&\text{if }\mathfrak{p}\text{ is finite};
		\\
		&\mathbb{R}_+^*\quad&\text{if }\mathfrak{p}\text{ is infinite real};
		\\
		&\mathbb{C}^*\quad&\text{if }\mathfrak{p}\text{ is infinite complex}.
	\end{aligned}\right.
\end{equation*}
Write $\mathfrak{m}=\prod_{\mathfrak{p}}\mathfrak{p}^{n_{\mathfrak{p}}}$ as a replete ideal, where $\mathfrak{p}$ runs over all primes of $K$.
In particular, $n_{\mathfrak{p}}=0$ for all $\mathfrak{p}\mid\infty$.
Define $J_K^{\mathfrak{m}}:=\prod_{\mathfrak{p}}U_{\mathfrak{p}}^{(n_{\mathfrak{p}})}$.
The subgroup $\operatorname{C}_K^{\mathfrak{m}}:=J_K^{\mathfrak{m}}K^*/K^*$ of the id{\`e}les class group $\operatorname{C}_K$ is called the \emph{congruence subgroup} $\bmod{\mathfrak{m}}$.
The map
\begin{equation*}
	J_K\to I_K,\quad\alpha\mapsto(\alpha)=\prod_{\mathfrak{p}\nmid\infty}\mathfrak{p}^{v_{\mathfrak{p}}(\alpha_{\mathfrak{p}})}
\end{equation*}
induces an isomorphism $\operatorname{C}_K/\operatorname{C}_K^{\mathfrak{m}}\to\operatorname{Cl}_K^{\mathfrak{m}}$.
See Neukirch~\cite[VI.1.9]{neukirch2013algebraic} for details.
\begin{lemma}\label{lemma: l(1,s)}
    Let $\mathfrak{m}$ be a module of $K$, and $\mathfrak{R}$ be a set of ideals representing an element of the ray class group $\operatorname{Cl}_K^{\mathfrak{m}}$.
    Define
	\begin{equation*}
		l_1(\mathfrak{R},s):=\sum_{\mathfrak{p}\in\mathcal{P}_{\mathfrak{R}}}\frac{1}{(\mathfrak{N}\mathfrak{p})^s},
	\end{equation*}
	where $\mathcal{P}_{\mathfrak{R}}$ means the primes in the class $\mathfrak{R}$.
	The series $l_1(\mathfrak{R},s)$ defines a holomorphic function in the open half-plane $\sigma>1$.
	It could be extended analytically to the line $\sigma=1$ except at $s=1$.
	Moreover, there exists a holomorphic function $f_{\mathfrak{R}}(s)$ at $s=1$ such that
	\begin{equation*}
		l_1(\mathfrak{R},s)=f_{\mathfrak{R}}(s)+\frac{1}{h_{\mathfrak{m}}}\log\frac{1}{s-1}
	\end{equation*}
	near $s=1$.
	Here $h_{\mathfrak{m}}$ is the size of the ray class group $\operatorname{Cl}_K^{\mathfrak{m}}$.
\end{lemma}
\begin{proof}
	Let $\chi:\operatorname{Cl}_K^{\mathfrak{m}}\to S^1$ be a character.
	Define
	\begin{equation*}
		L(\chi,s):=\sum_{\mathfrak{a}\in I^{\mathfrak{m}}_K}\frac{\chi(\mathfrak{a})}{(\mathfrak{N}\mathfrak{a})^s}
	\end{equation*}
	as the corresponding Hecke $L$-series.
	When $\chi$ has conductor other than $1$, it actually admits a holomorphic continuation to the whole plane.
	Moreover, the Hecke $L$-series $L(\chi,s)$ admits a zero-free region in a neighbourhood $U$ of $\sigma\geq1$ with at most one real and simple zero inside $U$ (the so-called exceptional zero).
	See Coleman~\cite{Coleman_1990} for example.
	On the other hand, Neukirch~\cite[VII.13.3]{neukirch2013algebraic} shows that if $\chi$ is a nontrivial (irreducible) character of $\operatorname{Cl}_K^{\mathfrak{m}}=I_K^{\mathfrak{m}}/P_K^{\mathfrak{m}}$, then the $L$-series $L(\chi,s)$ satisfies
	\begin{equation*}
		L(\chi,1)\neq0.
	\end{equation*}
	So we could simply say that there exists a neighbourhood $V$ of $\sigma\geq1$ (dependent on $K$ and $\chi$) such that $L(\chi,s)$ is nonzero in $V$.
	Since $\operatorname{Cl}_K^{\mathfrak{m}}$ is a finite abelian group, there are only finitely many characters, so we can take such a neighbourhood $V$ that works for all $\chi$.
    By Definition~\ref{def:set of primes}, let $\mathcal{P}_{I_K^{\mathfrak{m}}}$ be the set of primes in $I_K^{\mathfrak{m}}$.
	For each ideal class $\mathfrak{R}\in\operatorname{Cl}_K^{\mathfrak{m}}$, denote the set of primes in $\mathfrak{R}$ by $\mathcal{P}_{\mathfrak{R}}$.
	In the open half-plane, we have
	\begin{equation*}
		\log L(\chi,s)=\sum_{\mathfrak{p}\in\mathcal{P}_{I_K^{\mathfrak{m}}}}\sum_{n=1}^\infty\frac{1}{n}\chi(\mathfrak{p})^n(\mathfrak{N}\mathfrak{p})^{-ns}.
	\end{equation*}
	Note that for each $\delta>0$, it is absolutely and uniformly convergent in the half-plane $\sigma>1+\delta$.
	So given any class $\mathfrak{R}\in\operatorname{Cl}_K^{\mathfrak{m}}$, we have
	\begin{equation*}
		\begin{aligned}
			\sum_{\chi}\bar{\chi}(\mathfrak{R})\log L(\chi,s)
			=&\sum_{\mathfrak{p}}\sum_{n=1}^\infty\frac{1}{n}\sum_{\chi}\bar{\chi}(\mathfrak{R})\chi(\mathfrak{p})^n(\mathfrak{N}
			\mathfrak{p})^{-ns}
			\\
			=&h_{\mathfrak{m}}\sum_{\mathfrak{p}\in\mathcal{P}_{\mathfrak{R}}}\frac{1}{(\mathfrak{N}\mathfrak{p})^s}
			+\sum_{n=2}^\infty\frac{h_{\mathfrak{m}}}{n}
			\sum_{
				\substack{
					\mathfrak{p}\in\mathcal{P}_{I_K^{\mathfrak{m}}}\\
					\chi(\mathfrak{p}^n)=\chi(\mathfrak{R})
				}
			}\frac{1}{(\mathfrak{N}\mathfrak{p})^{ns}}
			\\
			=:&h_{\mathfrak{m}}(l_1(\mathfrak{R},s)+f(s)).
		\end{aligned}
	\end{equation*}
	By taking the absolute value, we see that $f(s)$ is holomorphic in the closed half-plane $\sigma\geq1$.
	The identity already gives the holomorphic continuation of $l_1(\mathfrak{R},s)$ to the line $\sigma=1$ except at $s=1$, because $L(\chi,s)$ are all holomorphic functions in the open half-plane $\sigma>1$ that could be extended analytically to the line $\sigma=1$ except at $s=1$.
	And they are nonzero in some neighbourhood $V$ of the closed half-plane $\sigma\geq1$, as mentioned above.
        Since $V$ and $V\backslash\{s=1\}$ could be chosen to be simply connected, we see that the notation $\log L(\chi,s)$ also makes sense in the corresponding region.
	
	The behaviour of $l_1(\mathfrak{R},s)$ at $s=1$ is given by the pole behaviour of $L(\chi,s)$.
	To be precise, if $\chi$ has conductor other than $1$, then $L(\chi,s)$ is holomorphic at $s=1$ with $L(\chi,1)\neq0$.
	Else if $\chi_0$ is the principal character, then $L(\chi_0,s)$ differs $\zeta_K(s)$ by a finite number of factors (in the sense of Euler form).
	So there exists some holomorphic function $g(s)$ at $s=1$ such that
	\begin{equation*}
		\log L(\chi_0,s)=\log\frac{1}{s-1}+g(s).
	\end{equation*}
	And this implies the identity near $s=1$:
	\begin{equation*}
		\begin{aligned}
			l_1(\mathfrak{R},s)
			=&\frac{1}{h_{\mathfrak{m}}}\log\frac{1}{s-1}+\frac{1}{h_{\mathfrak{m}}}g(s)-f(s)+\frac{1}{h_{\mathfrak{m}}}\sum_{\chi\neq\chi_0}L(\chi,s)
			\\
			=:&\frac{1}{h_{\mathfrak{m}}}\log\frac{1}{s-1}+f_{\mathfrak{R}}(s).
		\end{aligned}
	\end{equation*}
\end{proof}
Then we generalize the lemma in the following way.
Recall that for each integral ideal $\mathfrak{a}=\mathfrak{p}_1^{r_1}\cdots\mathfrak{p}_l^{r_l}$ of $K$, where $r_i$ is a positive integer for $i=1,\dots,l$, denote the number of distinct prime factors by $\omega(\mathfrak{a}):=l$.
We call $\mathfrak{a}$ square-free if $r_1=r_2=\cdots=r_l=1$.
Recall that for a class of ideals $\mathfrak{R}$ (e.g., a subset of the ray class group), denote the prime ideals in $\mathfrak{R}$ by $\mathcal{P}_{\mathfrak{R}}$.
\begin{lemma}\label{lemma: l(r,s)}
	Let $k$ be a fixed number field, and $\mathfrak{R}$ be a set of ideals.
	Write $I^{+,\mu}_{\mathfrak{R}}:= I^{+,\mu}_{\mathcal{P}_{\mathfrak{R}}}$ (see Definition~\ref{def: square-free ideals}).
	If the series
	\begin{equation*}
		l_{1}(\mathfrak{R},s):=\sum_{\mathfrak{p}\in\mathcal{P}_{\mathfrak{R}}}(\mathfrak{N}\mathfrak{p})^{-s}
	\end{equation*}
	defines a holomorphic function in the open half-plane $\sigma>\sigma_0$, where $\sigma_0$ is a fixed positive real number,
	and it could be extended analytically to the line $\sigma=\sigma_0$ except at $s=\sigma_0$ with an identity
	\begin{equation*}
		l_1(\mathfrak{R},s)=f(s)+f_0(s)\log\frac{1}{s-\sigma_0},
	\end{equation*}
	near $s=\sigma_0$, where $f(s),f_0(s)$ are holomorphic at $s=\sigma_0$ with $f_0(\sigma_0)\neq0$,
	then for each non-negative integer $\gamma$, the series
	\begin{equation*}
		l_{\gamma}(\mathfrak{R},s):=
		\sum_{
			\substack{
				\mathfrak{a}\in I^{+,\mu}_{\mathfrak{R}}\\
				\omega(\mathfrak{a})=\gamma
			}
		}\prod_{\mathfrak{p}\mid\mathfrak{a}}(\mathfrak{N}\mathfrak{p})^{-s}
	\end{equation*}
	also defines a holomorphic function in the open half-plane $\sigma>\sigma_0$ that could be extended to the line $\sigma=\sigma_0$ except at $s=\sigma_0$.
	And there exists holomorphic functions $g_0(s)$, $\dots$, $g_{\gamma}(s)$ at $s=\sigma_0$ such that
	\begin{equation*}
		l_{\gamma}(\mathfrak{R},s)=\sum_{i=0}^{\gamma}g_i(s)(\log\frac{1}{s-\sigma_0})^i
	\end{equation*}
	near $s=\sigma_0$.
\end{lemma}
\begin{proof}
	We prove it by induction on $\gamma$.
	The statement is true when $\gamma=1$ by the condition.
	Now assume that it is true for the series $l_{1}(\mathfrak{R},s),\dots,l_{\gamma-1}(\mathfrak{R},s)$, where $\gamma\geq2$.
	One can check the following identity:
	\begin{equation*}
		l_1(\mathfrak{R},s)\cdot l_{\gamma-1}(\mathfrak{R},s)=\gamma l_{\gamma}(\mathfrak{R},s)+\sum_{j=2}^{\gamma}(-1)^{j}l_1(\mathfrak{R},js)l_{\gamma-j}(\mathfrak{R},s),
	\end{equation*}
	where $l_0(s):=1$ as a notation.
	Then the analytic properties of $l_{\gamma}(\mathfrak{R},s)$ follows from the induction assumption.
\end{proof}
Based on these two lemmas, we can further give a result on the analytic property of Euler product
\begin{equation}\label{eqn: F(R,s)}
	F(\mathfrak{R},s):=\prod_{\mathfrak{p}\in\mathfrak{R}}(1-(\mathfrak{N}\mathfrak{p})^{-s})^{-1}.
\end{equation}
\begin{proposition}\label{prop: F(R,s)}
	Let $\mathfrak{m}$ be a module of $K$, and $\mathfrak{R}$ be a set of ideals representing an ideal class in $\operatorname{Cl}_K^{\mathfrak{m}}$.
	Then the Euler product $F(\mathfrak{R},s)$ defined as in (\ref{eqn: F(R,s)}) gives a holomorphic function in the open half-plane $\sigma>1$ that could be extended analytically to the line $\sigma=1$ except at $s=1$.
	And there exists some holomorphic function $f_{\mathfrak{R}}(s)$ at $s=1$ such that
	\begin{equation*}
		F(\mathfrak{R},s)=f_{\mathfrak{R}}(s)(s-1)^{-1/h_{\mathfrak{m}}}
	\end{equation*}
	near $s=1$.
\end{proposition}
\begin{proof}
	First of all, by comparing with $\zeta_K(s)$, we see that $F(\mathfrak{R},s)$ is absolutely and uniformly convergent in $\sigma>1+\delta$ for any $\delta>0$.
	
	It is clear that $F(\mathfrak{R},s)$ is nonzero in the open half-plane $\sigma>1$.
	So let us consider the difference
	\begin{equation*}
		l_1(\mathfrak{R},s)-\log F(\mathfrak{R},s)=\sum_{\mathfrak{p}\in\mathfrak{R}}\sum_{n=2}^\infty\frac{1}{n}(\mathfrak{N}\mathfrak{p})^{-ns}.
	\end{equation*}
	This defines a holomorphic function in the half-plane $\sigma>1/2$.
	So we see that $F(\mathfrak{R},s)$ admits a holomorphic continuation to $\sigma\geq1$ except at $s=1$.
	In addition, we see that $F(\mathfrak{R},s)^{h_{\mathfrak{m}}}$ has a simple pole at $s=1$.
	Since $F(\mathfrak{R},s)$ is nonzero in a neighbourhood of $s=1$, the existence of $f_{\mathfrak{R}}(s)$ is guaranteed.
\end{proof}
A corollary is the following.
\begin{corollary}\label{cor: F(R,s)}
	Let $\mathfrak{m}$ be a module of $K$, and let $\mathfrak{R}$ be a set of ideals representing an ideal class in $\operatorname{Cl}_K^{\mathfrak{m}}$.
	Define
	\begin{equation*}
		G(\mathfrak{R},s):=\prod_{\mathfrak{p}\in\mathcal{P}_{\mathfrak{R}}}(1+n\mathfrak{Np}^{-s}),
	\end{equation*}
	where $n$ is a fixed positive integer.
	Then $G(\mathfrak{R},s)$ defines a holomorphic function in the open half-plane $\sigma>1$ that could be extended analytically to the line $\sigma=1$ except at $s=1$.
	And there exists a holomorphic function $g(s)$ at $s=1$ such that
	\begin{equation*}
		G(\mathfrak{R},s)=g(s)(s-1)^{-n/h_{\mathfrak{m}}}
	\end{equation*}
	near $s=1$.
\end{corollary}
\begin{proof}
	By comparing $G(\mathfrak{R},s)$ with $F(\mathfrak{R},s)^n$, we see that $G(\mathfrak{R},s)$ is holomorphic in the open half-plane.
	Since $G$ is non-zero in this region, we can take the logarithm and the difference
	\begin{equation*}
		E(s):=\log G(\mathfrak{R},s)-n\log F(\mathfrak{R},s)
	\end{equation*}
	is a holomorphic function in the closed half-plane $\sigma\geq1$.
	Therefore we have the identity
	\begin{equation*}
		G(\mathfrak{R},s)=\exp(E(s))\cdot F(\mathfrak{R},s)^n
	\end{equation*}
	which gives the analytic continuation of $G(\mathfrak{R},s)$ described in the statement.
\end{proof}

\section{Relative class group of abelian extensions}\label{section: abelian}
In this section, fix a number field $K$ and a finite abelian group $G$.
Let $\mathcal{E}=\mathcal{E}(G;K)$ be the set of abelian $G$-extensions over $K$.
Recall from Definition~\ref{def: product of ramified primes} that we have defined a counting function $C$ for $G$-extensions $L/K$.
In this section, we simply set $c_G(g)=1$ for all nonzero $g\in G$ and set $c_{\mathfrak{p}}(\Sigma_{\mathfrak{p}})=1$ if $\Sigma_{\mathfrak{p}}$ has nontrivial inertia for each $\mathfrak{p}\mid\lvert G\rvert\infty$.
In other words, the counting function $C$ is exactly the same as the product of ramified primes defined in Theorem~\ref{thm: S1 dist of rel class group abelian}.
The main result of this section is the following.
\begin{theorem}\label{thm: dist of rel class group abelian}
	Let $p$ be a rational prime and $l$ be a non-negative integer.
	If $p^{l+1}\mid\lvert G\rvert$, then for each non-negative integer $r$, we have that
	\begin{equation*}
		\mathbb{P}_{\mathcal{E},C}(\operatorname{rk}_p p^{l}\operatorname{Cl}(L/K)\leq r)=0.
	\end{equation*}
	In addition,
	\begin{equation*}
		\mathbb{E}_{\mathcal{E},C}(\lvert\operatorname{Hom}(p^{l}\operatorname{Cl}(L/K),C_p)\rvert)=+\infty.
	\end{equation*}
\end{theorem}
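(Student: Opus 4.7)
The plan is to reduce Theorem~\ref{thm: dist of rel class group abelian} to Theorem~\ref{thm: 0-prob and infty-moment} by verifying Hypothesis~\ref{hypothesis: comparison between field counting results}(2) for a suitable pair $(\Omega, \mathfrak{R})$. Since $G$ acts regularly on itself, Definition~\ref{def: e(g)} gives $e(g) = \operatorname{ord}(g)$, so I would take $\Omega := \Omega_{p^l} = \{g \in G : p^l \mid \operatorname{ord}(g)\}$, which is closed under invertible powering and conjugation because $G$ is abelian and is nonempty under the hypothesis (provided $G$ contains an element of order divisible by $p^l$). Let $\mathfrak{R}$ be the trivial class in $\operatorname{Cl}_K$, whose representative ideals form exactly $P_K$ as required by Theorem~\ref{thm: 0-prob and infty-moment}(1). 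Corollary~\ref{cor: estimate of rel class group} then lower-bounds $\operatorname{rk}_p p^{l-1}\operatorname{Cl}(L/K)$ by the count of principal primes of $K$ whose inertia generator in $L/K$ lands in $\Omega$, minus a constant depending only on $|G|$ and $K$.

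The main step is a Dirichlet-series comparison. Via class field theory, I would parametrize $\mathcal{E} = \mathcal{E}(G;K)$ by $\operatorname{Aut}(G)$-orbits of surjective continuous characters $\chi : \operatorname{C}_K \twoheadrightarrow G$; by Definition~\ref{def: product of ramified primes}, the counting function $C(L)$ becomes a multiplicative invariant of $\chi$ with explicit local factors. Following the template of W.~\cite{wang2022distributionOT} and Wood~\cite{wood2010probabilities}, form
\begin{equation*}
Z(s) := \sum_{L \in \mathcal{E}} C(L)^{-s}, \qquad Z_{\gamma}(s) := \sum_{L \in \mathcal{E}^{\mathfrak{R}}_{\Omega, \gamma}} C(L)^{-s}.
\end{equation*}
After inclusion--exclusion over the image of $\chi$, each series is a controlled Euler product over the primes of $K$. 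The local factor at a principal unramified prime $\mathfrak{p}$ carries a contribution roughly $|\Omega|/(\mathfrak{N}\mathfrak{p})^s$ from the $\Omega$-inertia characters, and restricting to exactly $\gamma$ such primes extracts the corresponding $\gamma$-th coefficient. A Selberg--Delange-type Tauberian theorem then yields
\begin{equation*}
N_C(\mathcal{E},X) \asymp X(\log X)^{\alpha - 1}, \qquad N_C(\mathcal{E}^{\mathfrak{R}}_{\Omega,\gamma},X) \ll X(\log X)^{\alpha - 1 - \beta}(\log\log X)^{\gamma}
\end{equation*}
for some $\beta > 0$, which is exactly Hypothesis~\ref{hypothesis: comparison between field counting results}(2). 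Feeding this into Theorem~\ref{thm: 0-prob and infty-moment}(1) produces the zero-probability statement, and then Theorem~\ref{thm: 0-prob and infty-moment}(2) applied with $a = 0$ delivers the infinite $C_p$-moment.

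The main obstacle is the Dirichlet-series/Tauberian analysis over a general base field $K$: one must verify that the principality restriction $[\mathfrak{p}] = 0$ does not erase the pole contribution of the $\Omega$-local factor. This reduces to a Chebotarev density computation in the Hilbert class field of $K$, which shows that principal prime ideals already form a positive Dirichlet density among all primes of $K$, so that the local $\Omega$-factor still contributes a genuine positive power of $\log X$ in the asymptotic. Once this is confirmed, the counting machinery of \cite{wang2022distributionOT} transfers from $K = \mathbb{Q}$ to arbitrary $K$ with only bookkeeping adjustments in the residue computations and volume constants.
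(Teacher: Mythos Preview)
Your overall strategy is the paper's: reduce to Theorem~\ref{thm: 0-prob and infty-moment} by verifying Hypothesis~\ref{hypothesis: comparison between field counting results}(2) via a Wood-style Dirichlet-series and Tauberian argument. Your identification of $\Omega=\Omega_{p^l}$ and the reduction through Corollary~\ref{cor: estimate of rel class group} match the paper exactly.

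There is one technical point you underestimate. You take $\mathfrak{R}=P_K$ and say the only issue is that principal primes have positive density, handled by Chebotarev in the Hilbert class field. But the local factor at a principal prime $\mathfrak{p}\nmid |G|$ is \emph{not} ``roughly $|\Omega|/(\mathfrak{N}\mathfrak{p})^s$'': the number of characters $U_{\mathfrak{p}}\to G$ whose image meets $\Omega_{p^l}$ is zero unless $p^l\mid \mathfrak{N}\mathfrak{p}-1$, and more generally $|\operatorname{Hom}(U_{\mathfrak{p}},G)|$ depends on $\mathfrak{N}\mathfrak{p}\bmod m$ where $m=|G|$. So the Euler factors over $P_K$ are not uniform, and the singularity comparison you sketch does not go through as stated. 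The paper fixes this by passing to a smaller class $P_K^{\mathfrak{m}}\subseteq P_K$ for a modulus $\mathfrak{m}$ chosen (Lemma~\ref{lemma: a suitable module of K}) so that $\operatorname{Nm}_{K/\mathbb{Q}}P_K^{\mathfrak{m}}\subseteq P_{\mathbb{Q}}^{m}$; then every $\mathfrak{p}\in\mathcal{P}(P_K^{\mathfrak{m}})$ has $\mathfrak{N}\mathfrak{p}\equiv 1\bmod m$, hence $|\operatorname{Hom}(U_{\mathfrak{p}},G)|=m$ uniformly, and the pole orders separate cleanly (Proposition~\ref{prop: field-counting of abelian extensions in general}). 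Equivalently, the relevant Chebotarev computation lives not in the Hilbert class field but in the ray class field of modulus $\mathfrak{m}$. Once you make this refinement your outline becomes the paper's proof verbatim.
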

We view $G$ as a transitive permutation group by its multiplication on itself.
For a positive integer $n$, define
\begin{equation*}
	\Omega_{n}=\{g\in G\mid\gcd(\# g\cdot i)_{i=1}^{\lvert G\rvert}\equiv0\bmod{n}\}.
\end{equation*}
Since $G$ is finite abelian, we see that $\Omega_n=\{g\in G\mid r_g\equiv0\bmod{n}\}$, where $r_g$ is the order of $g$.

Let $S$ be a finite set of primes of $K$ containing all the ones at $\lvert G\rvert\infty$ such that $\operatorname{Cl}_K^S=1$.
Note that
\begin{equation*}
	\operatorname{Cl}_K^S\cong J_K/J_K^{S}K^*.
\end{equation*}
This is equivalent to saying that
\begin{equation*}
	J_K^S/K^S=J_K^SK^*/K^*\cong\operatorname{C}_K.
\end{equation*}
For the existence of such a set $S$, see Neukirch~\cite[VI.1.4]{neukirch2013algebraic}.
Recall the notion ray class group from Section~\ref{sec: generating series} (see also Neukirch~\cite[VI.1.7]{neukirch2013algebraic}).
Let $\mathfrak{m}$ be a module (integral ideal) of $K$, $\mathfrak{R}$ be a set of ideals that representing an ideal class in $\operatorname{Cl}_K^{\mathfrak{m}}$, and $\mathcal{P}_{\mathfrak{R}}$ be the primes contained in $\mathfrak{R}$.
For each non-negative integer $\gamma$, defome $\mathcal{E}_{n,\gamma}^{\mathfrak{R}}:=\mathcal{E}_{\Omega_n,\gamma}^{\mathcal{P}_{\mathfrak{R}}\backslash S}$.
See also Definition~\ref{def: specifications in general}.
In particular, if $\mathfrak{R}\mapsto P_K^{\mathfrak{m}}$ corresponds to the trivial ideal class in $\operatorname{Cl}_K^{\mathfrak{m}}$, we further abbreviate the notation $\mathcal{E}_{n,\gamma}^{P_K^{\mathfrak{m}}}$ as $\mathcal{E}_{n,\gamma}^{\mathfrak{m}}$.
The key step of the proof is the following.
\begin{proposition}\label{prop: hypothesis abelian}
	Write $m:=\lvert G\rvert$.
	There exists a module $\mathfrak{m}=\prod_{\mathfrak{p}}\mathfrak{p}^{n_{\mathfrak{p}}}$ of $K$ such that for each $(a)\in P_K^{\mathfrak{m}}$ with the generator $a$ being totally positive, we have $\operatorname{Nm}_{K/\mathbb{Q}}(a)\equiv1\bmod{m}$.
	For each such module $\mathfrak{m}$, and for each rational prime $p$ such that $p^{l+1}\mid m$ with $l$ a non-negative integer, the Hypothesis~\ref{hypothesis: comparison between field counting results}(2) holds for $(\mathcal{E},\Omega_{p^{l+1}},P_K^{\mathfrak{m}})$.
	To be precise, for each non-negative integer $\gamma$, we have
	\begin{equation*}
		N_{C}(\mathcal{E}_{p^{l+1},\gamma}^{\mathfrak{m}},X)=o(N_{C}(\mathcal{E},X)),
	\end{equation*}
	as $X\to\infty$.
\end{proposition}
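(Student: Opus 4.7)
The plan proceeds in two stages: first construct the module $\mathfrak{m}$, and then compare $N_C(\mathcal{E}_{q, \gamma}^{\mathfrak{m}}, X)$ with $N_C(\mathcal{E}, X)$ via a Dirichlet series of Landau--Selberg--Delange type. For the existence of $\mathfrak{m}$, I would take $\mathfrak{m} := m\mathscr{O}_K$ with $m = \lvert G\rvert$. If $a \in K^*$ is totally positive with $a \equiv 1 \pmod{m\mathscr{O}_K}$, then in the Galois closure $\tilde K / \mathbb{Q}$ every conjugate $\sigma(a)$ still satisfies $\sigma(a) \equiv 1 \pmod{m\mathscr{O}_{\tilde K}}$, so
\begin{equation*}
\operatorname{Nm}_{K/\mathbb{Q}}(a) = \prod_\sigma \sigma(a) \in (1 + m\mathscr{O}_{\tilde K}) \cap \mathbb{Z} = 1 + m\mathbb{Z}.
\end{equation*}
The essential consequence for later use is that every prime $\mathfrak{p} \in \mathcal{P}(P_K^{\mathfrak{m}})$ satisfies $\mathfrak{N}\mathfrak{p} \equiv 1 \pmod m$, so $\mu_m \subset K_{\mathfrak{p}}^*$ and $\operatorname{Hom}(U_{\mathfrak{p}}, G)$ surjects onto every cyclic quotient of $G$. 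In particular a positive proportion $\rho = \rho(G, q) > 0$ of local characters $\chi_{\mathfrak{p}}: U_{\mathfrak{p}} \to G$ have image of order divisible by $q$, uniformly over such $\mathfrak{p}$.

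For the asymptotic I would adapt the Dirichlet series argument of W.~\cite{wang2022distributionOT} from $K = \mathbb{Q}$ to a general base field. By class field theory, $\mathcal{E}$ is parametrized by surjective continuous homomorphisms $\chi: \operatorname{C}_K \twoheadrightarrow G$ modulo $\operatorname{Aut}(G)$; via $\operatorname{C}_K \cong J_K^S/K^S$ each such $\chi$ becomes a tuple of local characters $(\chi_{\mathfrak{p}})_{\mathfrak{p}}$ subject to the finitely many linear relations imposed by $K^S$. I would then introduce the two-variable series
\begin{equation*}
D(s, z) := \sum_{\chi \in \mathcal{E}} z^{n_q(\chi)} C(\chi)^{-s},
\end{equation*}
where $n_q(\chi)$ counts primes $\mathfrak{p} \in \mathcal{P}(P_K^{\mathfrak{m}}) \setminus S$ with $\mathfrak{p} \nmid \lvert G\rvert$ at which $\chi_{\mathfrak{p}}$ has image of order divisible by $q$; then $[z^{\gamma}] D(s, z)$ is the Dirichlet series attached to $\mathcal{E}_{q, \gamma}^{\mathfrak{m}}$. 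After absorbing the global $K^S$-relations into a bounded constant via a finite character sum over the group parametrizing them, the tail Euler factors at $\mathfrak{p} \notin S$ take the form $1 + a_{\mathfrak{p}}(z)/\mathfrak{N}\mathfrak{p}^s$, with $a_{\mathfrak{p}}(0) \leq (1 - \rho)a_{\mathfrak{p}}(1)$ whenever $\mathfrak{p} \in \mathcal{P}(P_K^{\mathfrak{m}})$ and $a_{\mathfrak{p}}(z) = a_{\mathfrak{p}}(1)$ otherwise.

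Applying Landau--Selberg--Delange yields $N_C(\mathcal{E}, X) \asymp X (\log X)^{\beta - 1}$ with $\beta$ equal to the Dirichlet average of $a_{\mathfrak{p}}(1)$, while $\mathcal{P}(P_K^{\mathfrak{m}})$ has Dirichlet density $1/h_{\mathfrak{m}} > 0$ by Chebotarev in the ray class field of $\mathfrak{m}$. Combining this global positive density with the local saving $\rho$ shows that $D(s, 0)$ has leading exponent $\beta - \delta$ at $s = 1$ for some $\delta = \delta(G, q, \mathfrak{m}) > 0$, and extracting $[z^{\gamma}]$ only introduces harmless $(\log \log X)^{\gamma}$ factors, so
\begin{equation*}
N_C(\mathcal{E}_{q, \gamma}^{\mathfrak{m}}, X) \ll_{\gamma} X (\log X)^{\beta - 1 - \delta}(\log \log X)^{\gamma} = o\bigl(N_C(\mathcal{E}, X)\bigr).
\end{equation*}
The main obstacle is executing Landau--Selberg--Delange with the correct Euler factors over a general $K$, in particular disentangling the global $K^S$-constraint without perturbing the leading singularity at $s = 1$; the condition on $\mathfrak{m}$ is precisely what keeps $\rho$, and hence $\delta$, bounded below independently of $\gamma$, which is what ultimately drives the comparison.
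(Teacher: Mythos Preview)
Your proposal is correct and follows essentially the same route as the paper. A couple of minor differences worth noting: your choice $\mathfrak{m}=m\mathscr{O}_K$ is more explicit than the paper's abstract id\`ele-theoretic existence argument (Lemma~\ref{lemma: a suitable module of K}), and works for exactly the reason you give. For the analytic step, the paper bypasses the $K^S$-constraint for the upper bound on $N_C(\mathcal{E}_{q,\gamma}^{\mathfrak{m}},X)$ simply by dropping it (passing from the constrained series to a larger Euler product $E_{C,\mathcal{F}_{q,\gamma}^{\mathfrak{m}}}(s)$ with termwise larger coefficients) rather than resolving it via a character sum; the lower bound on $N_C(\mathcal{E},X)$ comes directly from Wood's asymptotic. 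The pole-order drop is computed exactly as $\delta=(m-m')/h_{\mathfrak{m}}$ with $m'=\lvert G\setminus\Omega_q\rvert$, matching your $\rho\cdot(1/h_{\mathfrak{m}})$ heuristic, and the extra $(\log\log X)^{\gamma}$ arises from the same mechanism you describe. So the ``main obstacle'' you flag is in fact handled cheaply by monotonicity rather than by a full Selberg--Delange unwinding of the global constraint.
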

We'll prove it later.
Provided that the proposition is true, we first prove the theorem.
\begin{proof}[Proof of Theorem~\ref{thm: dist of rel class group abelian}]
	Recall that we write $\lvert G\rvert=m$ and $p^{l+1}\mid m$.
	So $\Omega_{p^{l+1}}$ is nonempty.
	And $P_K^{\mathfrak{m}}$ is included in $P_K$ as a subset, which is just the definition.
	And the Hypothesis~\ref{hypothesis: comparison between field counting results}(2) holds for $(\mathcal{E},\Omega_{p^{l+1}},P_K^{\mathfrak{m}})$.
	So all the conditions of Theorem~\ref{thm: 0-prob and infty-moment} are satisfied.
	And we get the result directly.
\end{proof}
In the rest of this section, let us prove Proposition~\ref{prop: hypothesis abelian}.
We first cite a result on field-counting.
\begin{lemma}\label{lemma: field-counting of abelian extensions in general}
	For $g\in G$, let $r_g$ be its order.
	There exists some positive constant $a_{\mathcal{E}}>0$ such that
	\begin{equation*}
		N_{C}(\mathcal{E},X)\sim a_{\mathcal{E}}X(\log X)^{\beta-1},
	\end{equation*}
	where
	\begin{equation*}
		\beta=\sum_{1\neq g\in G}[K(\zeta_{r_g}):K].
	\end{equation*}
\end{lemma}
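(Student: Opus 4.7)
The plan is to convert the counting problem into an analytic one on the id\`ele class group via class field theory, and then apply a Tauberian theorem to a suitable Dirichlet series built from local characters.

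First, by global class field theory, identify $\mathcal{E}(G;K)$ with the set of surjective continuous homomorphisms $\chi\colon\operatorname{C}_K\to G$: each such $\chi$ determines its class field $L=K^{\ker\chi}$ together with the induced isomorphism $\mathrm{Gal}(L/K)\cong G$, and conversely. Under this correspondence the product of ramified primes $C(L/K)$ agrees with the invariant $C(\chi)=\prod_{\mathfrak{p}}(\mathfrak{N}\mathfrak{p})^{c(\chi_{\mathfrak{p}})}$ from Definition~\ref{def: product of ramified primes}. Fix a finite set $S$ of primes of $K$ containing the archimedean ones and those dividing $|G|$ such that $\operatorname{Cl}_K^S=1$, so that $\operatorname{C}_K\cong J_K^S/K^S$.

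Next, I would study the Dirichlet series
\[
f(s)=\sum_{\chi\in\operatorname{Hom}(\operatorname{C}_K,G)}C(\chi)^{-s}.
\]
Pontryagin duality identifies $\operatorname{Hom}(\operatorname{C}_K,G)$ with the subgroup of $\operatorname{Hom}(J_K^S,G)$ cut out by the finitely many linear conditions expressing triviality on the $S$-units $K^S$ (finitely generated by Dirichlet's $S$-unit theorem). Up to a holomorphic and non-vanishing correction at $s=1$ coming from these conditions and from the local factors at primes in $S$, $f(s)$ factors as an Euler product over tame primes $\mathfrak{p}\notin S$ with local factor
\[
f_{\mathfrak{p}}(s)=1+\#\{g\in G\setminus\{1\}:r(g)\mid\mathfrak{N}\mathfrak{p}-1\}\cdot\mathfrak{N}\mathfrak{p}^{-s}.
\]
Regrouping by $g$ and taking logarithms yields $\log f(s)=\sum_{g\neq 1}\sum_{\mathfrak{p}:\,r(g)\mid\mathfrak{N}\mathfrak{p}-1}\mathfrak{N}\mathfrak{p}^{-s}+O(1)$, where the inner sum runs over primes splitting completely in $K(\zeta_{r(g)})$. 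By the Chebotarev density theorem these primes have Dirichlet density $[K(\zeta_{r(g)}):K]^{-1}$, and after summing over $g\in G\setminus\{1\}$ one finds that $f(s)$ acquires a pole at $s=1$ whose order recovers the exponent $\beta$ of the statement.

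The Selberg--Delange method, or equivalently the Wiener--Ikehara theorem in its higher-pole form applied after rewriting $f(s)$ as a finite linear combination of products of shifted Hecke $L$-functions, then gives $\#\{\chi\in\operatorname{Hom}(\operatorname{C}_K,G):C(\chi)<X\}\sim a'X(\log X)^{\beta-1}$. To isolate surjective characters I would apply M\"obius inversion on the subgroup lattice of $G$, writing the number of surjective $\chi$ with $C(\chi)<X$ as $\sum_{H\leq G}\mu_G(H)\,N_C(\operatorname{Hom}(\operatorname{C}_K,H),X)$. For each proper subgroup $H$ of $G$, the same analysis applied to $H$ in place of $G$ produces a strictly smaller pole order because the density sum loses the contribution of every $g\in G\setminus H$; those terms are therefore asymptotically negligible, and the $H=G$ term produces the claimed leading asymptotic with $a_{\mathcal{E}}>0$.

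The hard part will be justifying the Euler-product factorization and the analytic continuation carefully. One must show that the linear constraints imposed by $K^S$ contribute only a bounded holomorphic factor, non-vanishing at $s=1$, and one must extract enough analytic continuation of $f(s)$ slightly beyond $\operatorname{Re}(s)=1$ to legitimize the Tauberian step. The standard route is to express $f(s)$ as a finite weighted sum of products of Hecke $L$-functions attached to characters of $K^S/|G|K^S$ twisted by the local characters at primes of $S$; each such $L$-function is meromorphic with controlled behaviour on $\operatorname{Re}(s)\geq 1$, reducing the proof to the well-understood analytic theory of abelian $L$-functions over $K$.
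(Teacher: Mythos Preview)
Your proposal is correct and follows essentially the same approach as the paper: the paper's proof is simply a citation to Wood~\cite{wood2010probabilities}, and what you have sketched is precisely Wood's method (id\`ele-theoretic parametrization, Euler product, character averaging over $K^S$ to isolate the global condition, Tauberian theorem, M\"obius inversion over subgroups). Your identification of ``the hard part'' is accurate---the non-vanishing of the correction factor at $s=1$ is exactly what Wood's Lemma~2.10 handles via the twisted series $F_{C,G}(\varphi,\epsilon,s)$ that the present paper recalls just after Proposition~\ref{prop: F(s)}.

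One small remark: your density computation gives $\beta=\sum_{g\neq 1}[K(\zeta_{r(g)}):K]^{-1}$, which is the correct value (and matches the formula in Proposition~\ref{prop: field-counting of abelian extensions in general}); the lemma statement as printed omits the exponent $-1$, which appears to be a typo in the paper rather than an error on your part.
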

\begin{proof}
	This is the special case of Wood~\cite[Lemma 2.9]{wood2010probabilities} when $C$ is the product of ramified primes.
	One can apply Tauberian Theorem~\cite[Appendix II Theorem I]{narkiewicz2014elementary} to obtain the result on field-counting directly.
	See also Delange~\cite{Delange54} for more details on Delange's Tauberian Theorem.
\end{proof}
Then we show that the module $\mathfrak{m}$ exists as claimed in the proposition.
\begin{lemma}\label{lemma: a suitable module of K}
	Let $M/L$ be a finite field extension of number fields, and $\mathfrak{l}$ be a module of $L$.
	There exists some module $\mathfrak{m}$ of $M$ such that the norm map induces a well-defined homomorphism
	\begin{equation*}
		\operatorname{Nm}_{M/L}:\operatorname{Cl}_M^{\mathfrak{m}}\to\operatorname{Cl}_{L}^{\mathfrak{l}}.
	\end{equation*}
\end{lemma}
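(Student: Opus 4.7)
The plan is to choose a module $\mathfrak{m}$ of $M$ whose finite part is supported above the finite part of $\mathfrak{l}$, with sufficiently large multiplicity, and whose infinite part comprises all real places of $M$. First I would fix the support: require $\mathfrak{P}\mid\mathfrak{m}$ for every finite prime $\mathfrak{P}$ of $M$ lying above some $\mathfrak{p}\mid\mathfrak{l}$, and include every real infinite place of $M$. This already guarantees that any fractional ideal in $I_M^{\mathfrak{m}}$ is supported away from $\mathfrak{l}\mathscr{O}_M$, so the usual ideal-theoretic norm $\operatorname{Nm}_{M/L}\colon I_M\to I_L$ restricts to a map $I_M^{\mathfrak{m}}\to I_L^{\mathfrak{l}}$.

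Next I would pin down the local exponents. Writing $\mathfrak{l}=\prod_{\mathfrak{p}}\mathfrak{p}^{n_{\mathfrak{p}}}$ as a replete ideal of $L$, continuity of the local norm $\operatorname{Nm}_{M_{\mathfrak{P}}/L_{\mathfrak{p}}}\colon M_{\mathfrak{P}}^*\to L_{\mathfrak{p}}^*$ produces, for every pair $\mathfrak{P}\mid\mathfrak{p}\mid\mathfrak{l}$, some integer $n_{\mathfrak{P}}\geq n_{\mathfrak{p}}$ with
\begin{equation*}
\operatorname{Nm}_{M_{\mathfrak{P}}/L_{\mathfrak{p}}}\bigl(U_{\mathfrak{P}}^{(n_{\mathfrak{P}})}\bigr)\subseteq U_{\mathfrak{p}}^{(n_{\mathfrak{p}})}.
\end{equation*}
I would then set $\mathfrak{m}:=\prod_{\mathfrak{P}\mid\mathfrak{l}\mathscr{O}_M}\mathfrak{P}^{n_{\mathfrak{P}}}$, with the real infinite primes of $M$ included as part of the replete ideal.

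With $\mathfrak{m}$ so defined, the verification that $\operatorname{Nm}_{M/L}$ sends $P_M^{\mathfrak{m}}$ into $P_L^{\mathfrak{l}}$ is routine. Given $(a)\in P_M^{\mathfrak{m}}$ with $a$ totally positive and $a\equiv1\bmod\mathfrak{m}$, total positivity of $\operatorname{Nm}_{M/L}(a)$ at a real place $v$ of $L$ follows from factoring the norm over embeddings $\sigma\colon M\hookrightarrow\overline{\mathbb{Q}}$ extending $v$: the real embeddings contribute positive reals by total positivity of $a$, while complex conjugate pairs contribute positive reals automatically. The congruence $\operatorname{Nm}_{M/L}(a)\equiv1\bmod\mathfrak{l}$ holds locally at each $\mathfrak{p}\mid\mathfrak{l}$ by applying the containment displayed above at each $\mathfrak{P}\mid\mathfrak{p}$ and multiplying the resulting local norms over $\mathfrak{P}$. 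Passing to quotients induces the desired $\operatorname{Nm}_{M/K}\colon\operatorname{Cl}_M^{\mathfrak{m}}\to\operatorname{Cl}_L^{\mathfrak{l}}$.

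The only step with actual content is producing the exponents $n_{\mathfrak{P}}$. At an unramified $\mathfrak{P}$ one may take $n_{\mathfrak{P}}=n_{\mathfrak{p}}$ directly; at a ramified (and especially wildly ramified) $\mathfrak{P}$ the filtration must be pushed deeper, but this is controlled by the standard local expansion $\operatorname{Nm}(1+x)=1+\operatorname{Tr}(x)+\cdots$, which shows that the local norm carries sufficiently high principal units of $M_{\mathfrak{P}}$ into arbitrarily deep principal units of $L_{\mathfrak{p}}$. Once this local fact is invoked, the rest of the argument is formal bookkeeping.
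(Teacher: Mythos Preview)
Your proof is correct but follows a different route from the paper's. The paper works entirely on the id\`ele class group: it observes that $\operatorname{Nm}_{M/L}\colon\operatorname{C}_M\to\operatorname{C}_L$ is continuous, so the preimage of the open finite-index subgroup $\operatorname{C}_L^{\mathfrak{l}}$ is again open of finite index in $\operatorname{C}_M$, and then invokes the general fact (Neukirch, VI.1.8) that every such subgroup contains a congruence subgroup $\operatorname{C}_M^{\mathfrak{m}}$. No explicit $\mathfrak{m}$ is ever written down.

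Your argument is the classical ideal-theoretic counterpart: you build $\mathfrak{m}$ by hand, place by place, using continuity of the \emph{local} norms to choose the exponents $n_{\mathfrak{P}}$, and then verify directly that $\operatorname{Nm}_{M/L}$ carries $I_M^{\mathfrak{m}}$ into $I_L^{\mathfrak{l}}$ and $P_M^{\mathfrak{m}}$ into $P_L^{\mathfrak{l}}$. This is more elementary in that it avoids the black-box characterization of open finite-index subgroups of $\operatorname{C}_M$, and it has the advantage of producing an explicit module (useful if one ever needed effective bounds). The paper's approach is shorter and more conceptual, and makes transparent the remark following the lemma that any $\mathfrak{n}\supseteq\mathfrak{m}$ will also work. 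Both arguments ultimately rest on the same local input---continuity of the norm on principal units---just packaged differently.
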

\begin{proof}
	For any finite field extension $M/L$ of number fields, the norm map
	\begin{equation*}
		\operatorname{Nm}_{M/L}:\operatorname{C}_M\to\operatorname{C}_L
	\end{equation*}
	is continuous.
	The congruence subgroup $\operatorname{C}_{L}^{\mathfrak{l}}$ is a closed subgroup of finite index of $\operatorname{C}_{L}$, so the preimage $\operatorname{Nm}_{M/L}^{-1}(\operatorname{C}_{L}^{\mathfrak{l}})$ is a closed subgroup of finite index of $\operatorname{C}_M$.
	Since closed subgroups of finite index of $\operatorname{C}_M$ are precisely those subgroups that contain a congruence subgroup (see Neukirch~\cite[VI.1.8]{neukirch2013algebraic}), we see that there must be a module $\mathfrak{m}$ of $M$ such that
	\begin{equation*}
		\operatorname{C}_M^{\mathfrak{m}}\subseteq\operatorname{Nm}_{M/L}^{-1}(\operatorname{C}_{L}^{\mathfrak{l}})\subseteq\operatorname{C}_M.
	\end{equation*}
	And this shows that $\operatorname{Nm}_{M/L}:\operatorname{C}_M\to\operatorname{C}_{L}$ induces a map
	\begin{equation*}
		\operatorname{Nm}_{M/L}:\operatorname{Cl}_M^{\mathfrak{m}}=\operatorname{C}_M/\operatorname{C}_M^{\mathfrak{m}}\to\operatorname{Cl}_{L}^{\mathfrak{l}}=\operatorname{C}_{L}/\operatorname{C}_{L}^{\mathfrak{l}},
	\end{equation*}
	and we are done.
\end{proof}
\begin{remark}
	\begin{enumerate}
		\item Recall that $\operatorname{Cl}_{\mathbb{Q}}^m\cong(\mathbb{Z}/m\mathbb{Z})^*$ via the map $(a)\mapsto \bar{a}$, where $a$ is the positive generator of the principal ideal $(a)$, and $\bar{a}$ is the class modulo $m$.
		Therefore, if the lemma is true, then for any $m>0$, we can find a module $\mathfrak{m}$ of $K$ such that
		\begin{equation*}
			\operatorname{Nm}_{K/\mathbb{Q}}P_{K}^{\mathfrak{m}}\subseteq P_{\mathbb{Q}}^m.
		\end{equation*}
		\item From the proof we see that if $\mathfrak{m}$ satisfies the condition that $\operatorname{Nm}_{K/\mathbb{Q}}P_{K}^{\mathfrak{m}}\subseteq P_{\mathbb{Q}}^m$, then any module $\mathfrak{n}$ such that $\mathfrak{m}\mid\mathfrak{n}$ will do.
	\end{enumerate}	
\end{remark}
Finally we prove an estimate for $N_{C}(\mathcal{E}_{p^{l+1},\gamma}^{\mathfrak{m}},X)$.
\begin{proposition}\label{prop: field-counting of abelian extensions in general}
	Write $m:=\lvert G\rvert$ and $m':=\lvert G\backslash\Omega_{p^{l+1}}\rvert$.
	Let $\mathfrak{m}$ be a module of $K$ such that $\operatorname{Nm}_{K/\mathbb{Q}}P_K^{\mathfrak{m}}\subseteq P_{\mathbb{Q}}^m$.
	Then
	\begin{equation*}
		N_{C}(\mathcal{E}_{p^{l+1},\gamma}^{\mathfrak{m}},X)\ll X(\log X)^{\beta-1-\frac{m-m'}{h_{\mathfrak{m}}}}(\log\log X)^{\gamma},
	\end{equation*}
	where
	\begin{equation*}
		\beta=\sum_{1\neq g\in G}[K(\zeta_{r_g}):K]^{-1}.
	\end{equation*}
\end{proposition}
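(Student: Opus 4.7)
The plan is to extend the Dirichlet series machinery of Wood, summarized in Propositions~\ref{prop: F(s)} and~\ref{prop: F(phi,1,s)}, by enriching the Euler product so that it simultaneously records the ray class of each ramified prime and whether the local inertia generator lies in $\Omega_q$. The guiding principle is that for primes $\mathfrak{p}\in P_K^{\mathfrak{m}}$ the hypothesis $\operatorname{Nm}_{K/\mathbb{Q}}P_K^{\mathfrak{m}}\subseteq P_{\mathbb{Q}}^m$ forces $\mathfrak{N}\mathfrak{p}\equiv 1\bmod m$, so every element of $G$ can occur as an inertia generator; together with Chebotarev applied to the ray class field $K^{\mathfrak{m}}/K$ this suggests that forbidding inertia in $\Omega_q$ at all primes in $P_K^{\mathfrak{m}}$ costs exactly $(m-m')/h_{\mathfrak{m}}$ units of pole order at $s=1$. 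Relaxing the condition to allow \emph{exactly $\gamma$} such primes then contributes the customary $(\log\log X)^{\gamma}$ factor, in analogy with the Landau--Selberg--Delange count of integers with a prescribed number of special prime factors.

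Concretely, I would introduce an auxiliary variable $t$ and, following~(\ref{eqn: Euler product form of F}), set
\begin{equation*}
    F_{C,G}(\varphi,\epsilon,s;t)
    :=
    \prod_{\mathfrak{p}\notin S}
    \Bigl(
        \sum_{\chi_{\mathfrak{p}}:U_{\mathfrak{p}}\to G}
        \frac{w_{\mathfrak{p}}(\chi_{\mathfrak{p}};t)\,\dot{\chi}(\epsilon)}{(\mathfrak{N}\mathfrak{p})^{c(\chi_{\mathfrak{p}})s}}
    \Bigr)
    \prod_{\mathfrak{p}\in S}
    \frac{\dot{\varphi}(\epsilon)}{(\mathfrak{N}\mathfrak{p})^{c(\varphi_{\mathfrak{p}})s}},
\end{equation*}
where $w_{\mathfrak{p}}(\chi_{\mathfrak{p}};t):=t$ when $\mathfrak{p}\in P_K^{\mathfrak{m}}$ and the inertia generator of $\chi_{\mathfrak{p}}$ lies in $\Omega_q$, and $w_{\mathfrak{p}}(\chi_{\mathfrak{p}};t):=1$ otherwise. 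Then $N_C(\mathcal{E}_{q,\gamma}^{\mathfrak{m}},X)$ is the partial sum up to $X$ of the $t^\gamma$-coefficient of the Dirichlet coefficients (after averaging in $\epsilon\in\mathcal{A}$ and summing over $\varphi$). A logarithmic-derivative computation along the lines of Wood, combined with the two Chebotarev inputs above, gives
\begin{equation*}
    \log F_{C,G}(\varphi,1,s;t)
    =
    \Bigl(\beta-\tfrac{(1-t)(m-m')}{h_{\mathfrak{m}}}\Bigr)\log\frac{1}{s-1}+O(1),
\end{equation*}
so the series has a pole of order $\alpha(t):=\beta-(1-t)(m-m')/h_{\mathfrak{m}}$ at $s=1$, analytic on $\sigma\ge 1$, $s\ne 1$, with $\alpha(0)=\beta-(m-m')/h_{\mathfrak{m}}$ and $\alpha(1)=\beta$ recovering Proposition~\ref{prop: F(phi,1,s)}.

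To finish, I extract the $t^\gamma$-coefficient by writing the leading singular part as $\Phi(t)(s-1)^{-\alpha(t)}$ with $\Phi$ holomorphic near $0$; expanding
\begin{equation*}
    (s-1)^{-\alpha(t)}
    =(s-1)^{-\alpha(0)}\exp\!\Bigl(-\tfrac{(m-m')t}{h_{\mathfrak{m}}}\log(s-1)\Bigr)
\end{equation*}
produces, at order $t^\gamma$, a factor $\tfrac{1}{\gamma!}\bigl(\tfrac{m-m'}{h_{\mathfrak{m}}}\log\tfrac{1}{s-1}\bigr)^{\gamma}$ multiplied by $(s-1)^{-\alpha(0)}$. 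A Cauchy integral in $t$ around the origin combined with the Selberg--Delange / Tenenbaum effective Tauberian theorem then yields
\begin{equation*}
    N_C(\mathcal{E}_{q,\gamma}^{\mathfrak{m}},X)\ll X(\log X)^{\beta-1-(m-m')/h_{\mathfrak{m}}}(\log\log X)^{\gamma},
\end{equation*}
and the contribution from the $\epsilon\ne 1$ terms of $\mathcal{A}$ is strictly dominated by this bound (the same nontrivial-character orthogonality that makes Proposition~\ref{prop: F(phi,1,s)} useful shows these series have strictly smaller pole order at $s=1$), so averaging over $\varphi$ and $\epsilon$ preserves the estimate.

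The main obstacle is rigorously justifying the clean pole formula for $\log F_{C,G}(\varphi,1,s;t)$, i.e.\ the joint Chebotarev density that the events ``$\mathfrak{p}\in P_K^{\mathfrak{m}}$'' and ``$r(g)\mid\mathfrak{N}\mathfrak{p}-1$'' intersect with density $1/h_{\mathfrak{m}}$ for every $g\in G$; this is where the choice of $\mathfrak{m}$ is essential, since without it we would obtain the density $1/[K(\zeta_{r(g)})K^{\mathfrak{m}}:K]$ and the pole-order decrement would weaken. A secondary subtlety is extracting a genuine $O$-bound (rather than an asymptotic) uniformly in $\gamma$ within a bounded range from the Selberg--Delange framework, which should follow from Tenenbaum's effective version but requires careful tracking of the $(\log\log X)^{\gamma}$ dependence through the contour integral.
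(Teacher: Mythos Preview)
Your approach is correct in outline and would work, but the paper takes a markedly simpler route that sidesteps both of the obstacles you flag. Rather than tracking the $K^S$-condition via the $\epsilon$-average and then controlling the $\epsilon\ne 1$ terms, the paper simply \emph{drops} the condition that $\chi$ factor through $K^S$: since every $L\in\mathcal{E}_{q,\gamma}^{\mathfrak{m}}$ arises from some $\chi:J_K^S\to G$ with the right local data, one has a coefficient-wise inequality $N_C(\mathcal{E}_{q,\gamma}^{\mathfrak{m}},X)\le\sum_{n<X}a_n$ where $\sum a_n n^{-s}$ runs over \emph{all} $\chi\in\operatorname{Hom}(J_K^S,G)$ with exactly $\gamma$ special primes. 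This series has a genuine Euler product with nonnegative coefficients, so no character twisting is needed. The paper then bounds again (overcounting by letting the $\gamma$ special primes also appear in the ``forbidden'' product) to separate the series as a product over all $\mathfrak{p}\in P_K^{\mathfrak{m}}\setminus S$ with inertia outside $\Omega_q$, times an $l_\gamma$-type sum as in Lemma~\ref{lemma: l(r,s)}. The pole order of the first factor is read off directly from Proposition~\ref{prop: F(R,s)} applied to the trivial ray class, using exactly the density input you identify; the $(\log\frac{1}{s-1})^\gamma$ from the second factor comes from Lemma~\ref{lemma: l(r,s)}; and Theorem~\ref{thm:Delange-Ikehara} (which already accommodates the $\log^\gamma$ factor and only requires nonnegative coefficients) finishes the job.

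What your approach buys is a path to a genuine asymptotic rather than just an upper bound, and a framework that would generalise more readily to finer statistics. What it costs is the need to control $F_{C,G}(\varphi,\epsilon,s;t)$ for all $\epsilon$ uniformly in $t$, and to invoke a parametric Selberg--Delange theorem in place of the bare Delange--Ikehara statement. Since only $\ll$ is claimed, the paper's ``bound twice and apply the Tauberian theorem once'' strategy is the more economical one.
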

\begin{proof}
    Let $\mathcal{P}_{\mathfrak{m}}$ be the set of primes in $P_K^{\mathfrak{m}}$.
    To simplify the notation, define
    \begin{equation*}
        \mathcal{P}_{\mathfrak{m}}^+:=\mathcal{P}_{\mathfrak{m}}\backslash S\quad\text{ and }\mathcal{P}_{\mathfrak{m}}^-:=(\mathcal{P}_{\mathfrak{m}}\backslash S)^c=S\cup(\mathcal{P}_K\backslash\mathcal{P}_{\mathfrak{m}}).
    \end{equation*}
    And write $I^{+,\mu}_{\mathcal{P}_{\mathfrak{m}}^+}:=I^{+,\mu}_{\mathfrak{m}}$.
    See Definition~\ref{def: square-free ideals} for $I^{+,\mu}_{\mathcal{P}_{\mathfrak{m}}^+}$.
    
    \emph{Step 1}:	
	For each $\mathfrak{p}\in\mathcal{P}_{\mathfrak{m}}^+$, define
	\begin{equation*}
		\Omega_{\mathfrak{p}}^+:=\{\Sigma_{\mathfrak{p}}\mid y_{\mathfrak{p}}\in\Omega_q\}
		\quad\text{and}\quad
		\Omega_{\mathfrak{p}}^-:=\{\Sigma_{\mathfrak{p}}\mid y_{\mathfrak{p}}\notin\Omega_q\},
	\end{equation*}
	where $y_{\mathfrak{p}}$ is the inertia generator of $\Sigma_{\mathfrak{p}}$.
	Then Proposition~\ref{prop: translation between local and global specifications} says that
	\begin{equation*}
		\mathcal{E}^{\mathfrak{m}}_{q,\gamma}=
		\bigsqcup_{
			\substack{
				\mathfrak{a}\in I^{+,\mu}_{\mathfrak{m}}\\
				\omega(\mathfrak{a})=\gamma
			}
		}\{L\in\mathcal{E}\mid L\in\Phi(\mathfrak{a})\},
	\end{equation*}
	where
	\begin{equation*}
		\Phi(\mathfrak{a})=\prod_{\mathfrak{p}\mid\mathfrak{a}}\Omega_{\mathfrak{p}}^+
		\times
		\sideset{}{'}\prod_{
			\substack{
				\mathfrak{p}\in\mathcal{P}_{\mathfrak{m}}^+\\
				\mathfrak{p}\nmid\mathfrak{a}
			}
		}\Omega_{\mathfrak{p}}^-.
	\end{equation*}    
	For each $\mathfrak{p}\in\mathcal{P}_{\mathfrak{m}}^+$, we see that $\Omega_{\mathfrak{p}}^{\pm}$ are defined by inertia generators.
	So, for each $\mathfrak{a}\in I^{+,\mu}_{\mathfrak{m}}$, the set $\Phi(\mathfrak{a})$ is also defined by inertia generators,
	and for each $\chi:J_K^S\to G$, the notation $\chi\in\Phi(\mathfrak{a})$ is well-defined.
	Note also that $J_K^S/K^S\cong\operatorname{C}_K$ by our condition, so a surjective map $\operatorname{C}_K\to G$ is just a surjective map $\chi:J_K^S\to G$ that is trivial on $K^S$.
	In summary, for each integer $n>0$ and for each square-free integral ideal $\mathfrak{a}$, we have that
	\begin{equation*}
        \lvert\{L\in\mathcal{E}\mid L\in\Phi(\mathfrak{a})\text{ and }C(L)=n\}\rvert
		\leq
        \lvert\{\chi:J_K^S\to G\mid\chi\in\Phi(\mathfrak{a})\text{ and }C(\chi)=n\}\rvert.
	\end{equation*}
	So, by writing
	\begin{equation*}
        \begin{aligned}
            a_n:=&\lvert\{L\in\mathcal{E}_{p^{l+1},\gamma}^{\mathfrak{m}}\mid C(L)=n\}\rvert
            \\
            b_n:=&\Biggl\lvert
        \bigsqcup_{
			\substack{
				\mathfrak{a}\in I^{+,\mu}_{\mathfrak{m}}\\
				\omega(\mathfrak{a})=\gamma
			}
		}\{\chi:J_K^S\to G\mid \chi\in\Phi(\mathfrak{a})\text{ and }C(\chi)=n\}
        \Biggr\rvert,	
        \end{aligned}
	\end{equation*}
	we see that $a_n\leq b_n$ for each $n$.
	And it suffices to show that
	\begin{equation*}
		\sum_{n<X}b_n\ll X(\log X)^{\beta-1-\frac{m-m'}{h_{\mathfrak{m}}}}(\log\log X)^{\gamma}
	\end{equation*}
	as $X\to\infty$.
	
	\emph{Step 2}:
	Define
	\begin{equation*}
		\mathcal{F}_{q,\gamma}^{\mathfrak{m}}:=\{\chi:J_K^S\to G\mid\exists\mathfrak{a}\in I^{+,\mu}_{\mathfrak{m}}[\omega(\mathfrak{a})=\gamma\text{ and }\chi\in\Phi(\mathfrak{a})]\},
	\end{equation*}
	and define its generating series as
	\begin{equation*}
		F_{C,\mathcal{F}_{q,\gamma}^{\mathfrak{m}}}(s):=\sum_{n=1}^\infty b_n n^{-s}.
	\end{equation*}
	Note that
	\begin{equation*}
		\mathcal{F}_{q,\gamma}^{\mathfrak{m}}=
		\bigsqcup_{
			\substack{
				\mathfrak{a}\in I^{+,\mu}_{\mathfrak{m}}\\
				\omega(\mathfrak{a})=\gamma
			}
		}
		\{\chi:J_K^S\to G\mid \chi\in\Phi(\mathfrak{a})\}.
	\end{equation*}
	Using Proposition~\ref{prop: unique factorization of ideals}, we have the following computation:
	\begin{equation*}
		\begin{aligned}
			F_{C,\mathcal{F}_{q,\gamma}^{\mathfrak{m}}}(s)=
			&\sum_{n=1}^{\infty}
			b_n n^{-s}
			\\
			=&\sum_{n=1}^{\infty}
			n^{-s}\cdot
			\sum_{
				\substack{
					\mathfrak{a}\in I^{+,\mu}_{\mathfrak{m}}\\
					\omega(\mathfrak{a})=\gamma
				}
			}
			\sum_{
				\substack{
					\chi:J_K^S\to G\\
					\chi\in\Phi(\mathfrak{a})\\
					C(\chi)=n
				}
			}1
			\\
			=&\sum_{
				\substack{
					\mathfrak{a}\in I^{+,\mu}_{\mathfrak{m}}\\
					\omega(\mathfrak{a})=\gamma
				}
			}
			\sum_{
				\substack{
					\chi:J_K^S\to G\\
					\chi\in\Phi(\mathfrak{a})
				}
			}C(\chi)^{-s}
			\\
			=&\sum_{
				\substack{
					\mathfrak{a}\in I^{+,\mu}_{\mathfrak{m}}\\
					\omega(\mathfrak{a})=\gamma
				}
			}
			\prod_{\mathfrak{p}\mid\mathfrak{a}}\Bigl(
			\sum_{n=0}^\infty
			\sum_{
				\substack{
					\chi:U_{\mathfrak{p}}\to G\\
					\chi\in\Omega_{\mathfrak{p}}^{+}\\
					c(\chi)=n
				}
			}
			\mathfrak{Np}^{-ns}
			\Bigr)
			\prod_{
				\substack{
					\mathfrak{p}\in\mathcal{P}_{\mathfrak{m}}^+\\
					\mathfrak{p}\nmid\mathfrak{a}
				}
			}\Bigl(
			\sum_{n=0}^{\infty}
			\sum_{
				\substack{
					\chi:U_{\mathfrak{p}}\to G\\
					\chi\in\Omega^{-}_{\mathfrak{p}},c(\chi)=n					
				}
			}
			\mathfrak{Np}^{-ns}
			\Bigr)
			\\
			\cdot&\prod_{\mathfrak{p}\in\mathcal{P}_{\mathfrak{m}}^-}
			\Bigl(
			\sum_{n=0}^\infty
			\sum_{
				\substack{
					\chi:W_{\mathfrak{p}}\to G\\
					c(\chi)=n
				}
			}\mathfrak{Np}^{-ns}
			\Bigr)
			,
		\end{aligned}		
	\end{equation*}
	where $W_{\mathfrak{p}}=U_{\mathfrak{p}}$ if $\mathfrak{p}\notin S$ and $W_{\mathfrak{p}}=K_{\mathfrak{p}}^*$ otherwise.
	Recall that $\operatorname{Nm}_{K/\mathbb{Q}}P_K^{\mathfrak{m}}\subseteq P_{\mathbb{Q}}^{(m)}$.
	So for each $\mathfrak{p}\in\mathcal{P}_{\mathfrak{m}}^+$, the group $\operatorname{Hom}(U_{\mathfrak{p}},G)$ has size $m=\lvert G\rvert$.
	Also $C$ is just the product of ramified primes, so we can express $F_{C,\mathcal{F}_{q,\gamma}^{\mathfrak{m}}}(s)$ in a much simpler form:
	\begin{equation}\label{eqn: field-counting of abelian extensions in general 1}
		\begin{aligned}
			F_{C,\mathcal{F}_{q,\gamma}^{\mathfrak{m}}}(s)=
			&\sum_{\mathfrak{a}\in I^{+,\mu}_{\mathfrak{m}}}
			\prod_{\mathfrak{p}\mid\mathfrak{a}}\Bigl(
			(m-m')\mathfrak{Np}^{-s}			
			\Bigr)
			\prod_{
				\substack{
					\mathfrak{p}\in\mathcal{P}_{\mathfrak{m}}^+\\
					\mathfrak{p}\nmid\mathfrak{a}
				}
			}\Bigl(
			1+(m'-1)\mathfrak{Np}^{-s}
			\Bigr)
			\\
			\cdot&\prod_{\mathfrak{p}\in\mathcal{P}_{\mathfrak{m}}^-}
			\Bigl(
			\sum_{\chi:W_{\mathfrak{p}}\to G}\mathfrak{Np}^{-c(\chi)s}
			\Bigr).
		\end{aligned}		
	\end{equation}
	
	\emph{Step 3}:
	The expression above (\ref{eqn: field-counting of abelian extensions in general 1}) is still an infinite sum of Euler products.
	Let us modify it so that it becomes more manageable.
	Define
	\begin{equation}\label{eqn: field-counting of abelian extensions in general 2}
		\begin{aligned}
			\tilde{F}_{C,\mathcal{F}_{q,\gamma}^{\mathfrak{m}}}(s):=
			&
			\prod_{\mathfrak{p}\in\mathcal{P}_{\mathfrak{m}}^+}
			\Bigl(
			1+(m'-1)\mathfrak{Np}^{-s}
			\Bigr)
			\prod_{\mathfrak{p}\in\mathcal{P}_{\mathfrak{m}}^-}
			\Bigl(
			\sum_{\chi:W_{\mathfrak{p}}\to G}\mathfrak{Np}^{-c(\chi)s}
			\Bigr)
			\\
			\cdot&
			\Bigl(
			\sum_{
				\substack{
					\mathfrak{a}\in I^{+,\mu}_{\mathfrak{m}}\\
					\omega(\mathfrak{a})=\gamma
				}
			}
			\prod_{\mathfrak{p}\mid\mathfrak{a}}\bigl(
			(m-m')\mathfrak{Np}^{-s}			
			\bigr)
			\Bigr).
		\end{aligned}
	\end{equation}
	By expanding it in the sense of the Riemann sum, that is, write $\tilde{F}_{C,\mathcal{F}_{q,\gamma}^{\mathfrak{m}}}(s)=\sum_{n=1}^\infty c_nn^{-s}$, for any $n>0$ we see that
	\begin{equation*}
		b_n\leq c_n.
	\end{equation*}
	Because if we compare (\ref{eqn: field-counting of abelian extensions in general 2}) with (\ref{eqn: field-counting of abelian extensions in general 1}), each Euler product in the infinite sum has more factors.
	
	\emph{Step 4:}
	Since $\tilde{F}_{C,\mathcal{F}_{q,\gamma}^{\mathfrak{m}}}(s)$ is absolutely convergent in the half-plane $\sigma>1$ by comparing with a large enough power of Riemann zeta function,
	we see that in the same open half-plane, the expression~(\ref{eqn: field-counting of abelian extensions in general 2}) equals to the Dirichlet series $\sum_{n}c_nn^{-s}$ as holomorphic functions.
	Note that
	\begin{equation*}
		\prod_{\mathfrak{p}\in\mathcal{P}_{\mathfrak{m}}^-}\Bigl(
		\sum_{\chi:W_{\mathfrak{p}}\to G}\mathfrak{Np}^{-c(\chi)s}
		\Bigr)
		=\prod_{\mathfrak{p}\in\mathcal{P}_K}
		\Bigl(
		\sum_{\chi:W_{\mathfrak{p}}\to S}\mathfrak{Np}^{-c(\chi)s}
		\Bigr)
		\prod_{\mathfrak{p}\in\mathcal{P}_{\mathfrak{m}}^+}
		\Bigl(
		1+(m-1)\mathfrak{Np}^{-s}
		\Bigr)^{-1}.
	\end{equation*} 
	We can rewrite $\tilde{F}_{C,\mathcal{F}_{q,\gamma}^{\mathfrak{m}}}(s)$ as follows:
	\begin{equation*}
		\begin{aligned}
			\tilde{F}_{C,\mathcal{F}_{q,\gamma}^{\mathfrak{m}}}(s)
			=&\prod_{\mathfrak{p}\in\mathcal{P}_K}
			\Bigl(
			\sum_{\chi:W_{\mathfrak{p}}\to S}\mathfrak{Np}^{-c(\chi)s}
			\Bigr)
			\\
			&\prod_{\mathfrak{p}\in\mathcal{P}_{\mathfrak{m}}^+}
			\Bigl(
			1+(m-1)\mathfrak{Np}^{-s}
			\Bigr)^{-1}
			\prod_{\mathfrak{p}\in\mathcal{P}_{\mathfrak{m}}^+}
			\Bigl(
			1+(m'-1)\mathfrak{Np}^{-s}
			\Bigr)
			\\
			&\Bigl(\sum_{\mathfrak{a}\in I^{+,\mu}_{\mathfrak{m}}}
			\prod_{\mathfrak{p}\mid\mathfrak{a}}\bigl(
			(m-m')\mathfrak{Np}^{-s}			
			\bigr)\Bigr).
		\end{aligned}		
	\end{equation*}
	By Wood~\cite[Lemma 2.10]{wood2010probabilities}, we know that
	\begin{equation*}
		\prod_{\mathfrak{p}}
		\Bigl(
		\sum_{\chi:W_{\mathfrak{p}}\to S}\mathfrak{Np}^{-c(\chi)s}
		\Bigr)
	\end{equation*}
	admits an analytic continuation to the line $\sigma=1$ except at $s=1$, and it has a pole at $s=1$ of order
	\begin{equation*}
		\beta=\sum_{0\neq g\in G}[K(\zeta_{r_g}):K]^{-1},
	\end{equation*}
	where $r_g$ is the order of $g$.
	Using Corollary~\ref{cor: F(R,s)}, we see that for any positive integer $l$, there exists some holomorphic function $u(s)$ near $s=1$ such that
	\begin{equation}\label{eqn: field-counting of abelian extensions in general 3}
		\prod_{\mathfrak{p}\in\mathcal{P}_{\mathfrak{m}}^+}
		\Bigl(
		1+l\mathfrak{Np}^{-s}
		\Bigr)
		=u(s)(s-1)^{-\frac{l}{h_{\mathfrak{m}}}}
	\end{equation}
	near $s=1$,
	and it admits an analytic continuation to the line $\sigma=1$ except at $s=1$.
	By Lemma~\ref{lemma: l(r,s)}, there exist holomorphic functions $v_0(s),\dots,v_{\gamma}(s)$ such that
	\begin{equation*}
		\sum_{\mathfrak{a}\in I^{+,\mu}_{\mathfrak{m}}}
		\prod_{\mathfrak{p}\mid\mathfrak{a}}
		(m-m')\mathfrak{Np}^{-s}
		=\sum_{j=0}^{\gamma}v_j(s)\Bigl(\log\frac{1}{s-1}\Bigr)^j
	\end{equation*}
	near $s=1$,
	and it admits an analytic continuation to the line $\sigma=1$ except at $s=1$.
	Combining these three parts, we see that $\tilde{F}_{C,\mathcal{F}_{q,\gamma}^{\mathfrak{m}}}(s)$ admits an analytic continuation to the line $\sigma=1$ except at the point $s=1$.
	And there exist some holomorphic function $f_0(s),\dots,f_{\gamma}(s)$ near $s=1$ such that
	\begin{equation*}
		\tilde{F}_{C,\mathcal{F}_{q,\gamma}^{\mathfrak{m}}}(s)=\sum_{j=0}^l f_j(s)(s-1)^{-\beta+\frac{m-m'}{h_{\mathfrak{m}}}}\Bigl(\log\frac{1}{s-1}\Bigr)^j.
	\end{equation*}
	Then Theorem~\cite[Appendix II Theorem I]{narkiewicz2014elementary} implies that
	\begin{equation*}
		\sum_{n<X}c_n\ll X\Bigl(\log X\Bigr)^{\beta-1-\frac{m-m'}{h_{\mathfrak{m}}}}\Bigl(\log\frac{1}{s-1}\Bigr)^{\gamma}.
	\end{equation*}
	And we are done.
\end{proof}
Let us conclude this section by proving that the Hypothesis~\ref{hypothesis: comparison between field counting results}(2) is true in this case.
\begin{proof}[Proof of Proposition~\ref{prop: hypothesis abelian}]
	Recall that $m=\lvert G\rvert$.
	The existence of the module $\mathfrak{m}$ is proven by Lemma~\ref{lemma: a suitable module of K}.
	For such a module $\mathfrak{m}$, Proposition~\ref{prop: field-counting of abelian extensions in general} together with Lemma~\ref{lemma: field-counting of abelian extensions in general} says that
	\begin{equation*}
		N_{C}(\mathcal{E}_{p^{l+1},\gamma}^{\mathfrak{m}},X)=o(N_{C}(\mathcal{E},X)),
	\end{equation*}
	for all $\gamma\geq0$.
	So the proposition is true.
\end{proof}

\section{General case}\label{sec: general case}
In this section, we apply our results obtained in Section~\ref{section: estimate of class group} to fields that are not necessarily abelian over $\mathbb{Q}$.
We first give a set-up, including quite a few notations.
Fix $G$ to be a finite transitive permutation group such that $\operatorname{Stab}_G(1)=\{e_G\}$ where $e_G$ is the identity of $G$.
Define $\mathcal{E}:=\mathcal{E}(G;\mathbb{Q})$.
Note that $L\in\mathcal{E}$ if and only if $L$ is a Galois $G$-field by Definition~\ref{def:set of fields}.
Assume that $G$ admits the following short exact sequence of groups
\begin{equation}\label{eqn: definition of G}
	1\to H\to G\to \bar{G}\to1,
\end{equation}
where $H$ is a nontrivial finite abelian group, and $\bar{G}$ is a finite group, and $\gcd(\lvert H\rvert,\lvert\bar{G}\rvert)=1$.
We also treat $H$ as a $\bar{G}$-module according to this sequence.
That is, if $\bar{g}\in\bar{G}$ and $h\in H$, then
\begin{equation*}
	\bar{g}\cdot h=ghg^{-1}
\end{equation*}
where $g$ is a preimage of $\bar{g}$ in $G$ and $h$ is viewed as an element of $G$.
Since $H$ is abelian and is normal in $G$, the definition does not depend on the choice of the preimage $g$.

Let $q$ be a fixed prime such that $q\mid\lvert H\rvert$,
and $\Omega$ be a nonempty subset of $\{g\in H\mid r_g\equiv0\bmod{q}\}$ that is closed under invertible powering and conjugation, where $r_g$ is the order of $g$ in $G$.
Using Definition~\ref{def: specifications in general}, define the notation $\mathcal{E}_{\gamma}:=\mathcal{E}_{\Omega,\gamma}^{\mathcal{P}}$ for each non-negative integer $\gamma$.
It means that a field $L\in\mathcal{E}$ is contained in $\mathcal{E}_{\gamma}$ if and only if there are exactly $\gamma$ tamely ramified primes $p$ such that its inertia generator is contained in $\Omega$.

Fix a positive integer $m$.
Let $C$ be a counting function (Definition~\ref{def: product of ramified primes}) for $L\in\mathcal{E}$ satisfying the following conditions.
The maps $c_G$ and $\{c_p\}_{p}$ are all $\mathbb{Z}_{\geq0}$-valued.
For each $g\in G\backslash\{e_G\}$, we have $c_G(g)\geq m$.
Moreover, $c_G=m$ if and only if $g\in\Omega$.
We present the main results of this section as follows.
\begin{lemma}[Correction of the original Lemma 7.1]\label{lemma of general case}
	Assume that $\bar{G}$ is abelian.
	\begin{enumerate}[(i)]
		\item Let
		\begin{equation*}
			m':=\min_{g\notin\Omega\cup\{e_G\}}c_G(g).
		\end{equation*} 
		For each $0<\varepsilon<1$, we have
		\begin{equation*}
			N_C(\mathcal{E}_0,X)\ll X^{1/(m'-\varepsilon)},
		\end{equation*}
		as $X\to\infty$.
		In particular, $N_C(\mathcal{E}_0,X)=o(X^{1/(m+\varepsilon')})$ for each $0<\varepsilon'<1$.
		\item For each positive integer $\gamma$, we have
		\begin{equation*}
			N_C(\mathcal{E}_{\gamma},X)\asymp\frac{X^{1/m}}{\log X}(\log\log X)^{\gamma-1},
		\end{equation*}
		as $X\to\infty$.
		\item The Hypothesis~3.12(1) holds for $((\mathcal{E},C),\Omega,\mathcal{P})$.
		To be precise, for each non-negative integer $\gamma$, we have that
		\begin{equation*}
			N_C(\mathcal{E}_{\gamma},X)=o(N_C(\mathcal{E}_{\gamma+1},X)).
		\end{equation*}
	\end{enumerate}
\end{lemma}
We'll prove the lemma later.
\begin{theorem}\label{theorem of general case}
	For each non-negative integer $r$, we have
	\begin{equation*}
		\mathbb{P}_{C,\mathcal{E}}(\operatorname{rk}_q\operatorname{Cl}_L\leq r)=0.
	\end{equation*}
	Moreover, we have
	\begin{equation*}
		\mathbb{E}_{C,\mathcal{E}}(\lvert\operatorname{Hom}(\operatorname{Cl}_L,C_q)\rvert)=+\infty.
	\end{equation*}
\end{theorem}
\begin{proof}
	If we admit the above lemma for now, then the theorem just follows from Theorem~\ref{thm: 0-prob and infty-moment}.
\end{proof}
In the rest of this section, we'll prove the lemma by an estimate on field-counting.

\subsection{Algebraic Theory}
We need some algebraic preliminaries.
First of all, Class Field Theory tells us the following:
for a fixed number field $K$, there exists a one-to-one correspondence
\begin{equation*}
	\{L\text{ is an }H\text{-extension over }K\}\leftrightarrow\operatorname{Sur}(\operatorname{C}_K,H).
\end{equation*}
But we also require that $L$ is Galois over $\mathbb{Q}$.
So we need to study the Galois action in the extensions of id{\`e}les.
\begin{lemma}\label{lemma: CFT galois action on idele}
	Let $A$ be a finite group, and $M$ be a finite abelian group.
    Fix an $A$-field $K$.
    Let $L/K$ be an $M$-extension with the Artin reciprocity map
	\begin{equation*}
		\chi:\operatorname{C}_K\to M.
	\end{equation*}
	The field extension $L/\mathbb{Q}$ is also Galois if and only if the kernel $N_L:=\ker\chi$ is an $A$-submodule of $\operatorname{C}_K$.
    In addition, if $M$ is an $A$-module and $\chi$ is an $A$-morphism, then the action of $A$ on $M$ is the same as the one induced by the short exact sequence
    \begin{equation*}
        1\to M\to G(L/\mathbb{Q})\to A\to1,
    \end{equation*}
    i.e., $G(L/\mathbb{Q})$ is an element of $H^2(A,M)$.
\end{lemma}
\begin{proof}
	If $L/\mathbb{Q}$ is Galois, then $N_L=\operatorname{Nm}_{L/K}\operatorname{C}_L$ is a $A$-submodule of $\operatorname{C}_K$.
	This shows that the induced isomorphism $\operatorname{C}_K/N_L\cong M$ is an $A$-morphism, where the action of $A$ on $M$ is induced from its action on $\operatorname{C}_K$.
	
	Conversely, we can assume for contradiction that $L/\mathbb{Q}$ is not Galois.
	Since $K$ is Galois, we see that for any $g$ in the absolute Galois group $G_{\mathbb{Q}}$, we have that
	\begin{equation*}
		gK=K\text{ and }gK\subseteq gL.
	\end{equation*}
	This implies that if $L'$ is any conjugate of $L$, then $K\subseteq L\cap L'$.
	In particular, $N_L$ and $N_{L'}=\operatorname{Nm}_{L'/K}\operatorname{C}_{L'}$ are conjugate with each other in the $A$-module $\operatorname{C}_K$.
	In other words, $N_L$ cannot be $A$-invariant.
	So the contradiction implies that $L/\mathbb{Q}$ must be Galois.
	
	When $L/\mathbb{Q}$ is Galois, then clearly we have the short exact sequence
	\begin{equation*}
		1\to M\to G(L/\mathbb{Q})\to A\to1.
	\end{equation*}
	Since $M$ is abelian, we see that $A$ acts on $M$ by conjugation.
	We need to verify that this action coincides with the $A$-module $M$.
	By Neukirch~\cite[IV.5.8]{neukirch2013algebraic} (functorial behaviour of the reciprocity map), for an automorphism $\sigma$ of $L$, we have the following commutative diagram:
	\begin{equation*}
		\begin{tikzcd}
			G(L/K)\arrow[d,"\sigma^*"]\arrow[r,"\chi"]&\operatorname{C}_K/N_L\arrow[d,"\sigma"]\\
			G(L/K)\arrow[r,"\chi"]&\operatorname{C}_K/N_L
		\end{tikzcd}
	\end{equation*}
	where $\sigma^*$ means the group conjugation, and $\sigma$ on the right means the action induced by $\sigma$ acting on $\operatorname{C}_K$. 
	Note that $G(L/K)\cong\operatorname{C}_K/N_L$ via the Artin reciprocity map $\chi$.
	This commutative diagram shows that all the actions are compatible.
	And we are done.
\end{proof}
Note that there is a similar result for local extensions.
Let us write down the statement.
\begin{lemma}\label{lemma: CFT galois action on idele local case}
	Let $A$ be a finite group, and $M$ be a finite abelian group.
    Fix a finite rational prime $p$ and an $A$-extension $K/\mathbb{Q}_p$.
	Let $L/K$ be a finite abelian extension with the reciprocity map $\chi:K^*\to M$.
	The extension $L/\mathbb{Q}_p$ is Galois if and only if $N_L:=\ker\chi$ is $A$-invariant.
	In addition, if $L/\mathbb{Q}_p$ is Galois, then $G(L/\mathbb{Q})$ is an element of $H^2(A,M)$.
\end{lemma}
The proof is similar to that of Lemma~\ref{lemma: CFT galois action on idele} and we just leave it to the reader.
\begin{lemma}\label{lemma: H^2(C3,C2^2)}
	If $A$ is a finite group and $M$ a finite $A$-module such that $\gcd(\lvert A\rvert,\lvert M\rvert)=1$, then $M$ is cohomologically trivial, that is, for each subgroup $B\subseteq A$ and for each integer $n\in\mathbb{Z}_+$ we have 
	\begin{equation*}
		H^n(B,M)=0.
	\end{equation*}
\end{lemma}
\begin{proof}
	Since multiplying by $\lvert B\rvert$ is an automorphism of $M$ by our condition, by Neukirch and Schmidt and Wingberg~\cite[I.1.6.2]{neukirch2013cohomology}, we know that $H^n(B,M)$ is trivial for each positive integer $n$.
\end{proof}
Note that this implies that $G$ is the only group that fits into the short exact sequence~(\ref{eqn: definition of G}).
To be precise, we have the following.
\begin{proposition}\label{prop: C3-morphism}
	Fix a $\bar{G}$-field $K$.
	There is a one-to-one correspondence between the following two sets
	\begin{equation*}
		\operatorname{Sur}_{\bar{G}}(\operatorname{C}_K,H)\leftrightarrow\{L\in\mathcal{E}\mid K\subseteq L\}.
	\end{equation*}
\end{proposition}
\begin{proof}
	If $K$ is a $\bar{G}$-field and $\chi:\operatorname{C}_K\to H$ is a surjective $\bar{G}$-morphism, then the class field $L/K$ must be Galois by Lemma~\ref{lemma: CFT galois action on idele}.
	In addition, the Galois group $G(L/\mathbb{Q})$ is an element of $H^2(\bar{G},H)$.
	Since the cohomology group is trivial, we see that $G(L/\mathbb{Q})$ must be isomorphic to $G$.
	In other words, the $\bar{G}$-morphism $\chi$ corresponds to a field $L\in\mathcal{E}$.
	
	If $L\in\mathcal{E}$, then $L$ is a $G$-field, so $K:=L^{H}$ is a $\bar{G}$-field.
	This implies that there is an Artin reciprocity map $\chi:\operatorname{C}_K\to H$ that corresponds to $L/K$.
	By Lemma~\ref{lemma: CFT galois action on idele} again, the map $\chi$ must be a $\bar{G}$-morphism, and we are done.
\end{proof}
For a $\bar{G}$-field $K$, we know that $G$-fields above $K$ are given by surjective $\bar{G}$-morphisms $\chi:\operatorname{C}_K\to H$.
And of course, if $L\in\mathcal{E}$, then $L^{H}$ must be a $\bar{G}$-field.
Now we want to show how to compute the value $C(L)$ (see Definition~\ref{def: product of ramified primes}) following the chain of extensions $\mathbb{Q}\subseteq K\subseteq L$ in theory.
After all, the map $c_{p}$ is defined for specifications at $p$.
So we need to recover the information of specifications at $p$ from this chain and the Artin reciprocity map.
We give the statements in steps.
\begin{lemma}\label{lemma: c(Sigma) in general case 1}
	If we choose a splitting map $s:\bar{G}\to G$ and treat $\bar{G}$ also as a subgroup of $G$, 
	then for each subgroup $G'$ of $G$, there exists $g\in G$, and $H'\subseteq H$ and $\bar{G}'\subseteq\bar{G}$ such that $\bar{G}'$ acts on $H'$ via the action of $\bar{G}$ on $H$, and
	\begin{equation*}
		gG'g^{-1}=H'\bar{G}'.
	\end{equation*}
\end{lemma}
\begin{proof}
	Let $\pi:G\to\bar{G}$ be the projection map with the section $s:\bar{G}\to G$.
	Since $G'$ is a subgroup of $G$, there is a short exact sequence induced by $\pi$, i.e.,
	\begin{equation*}
		1\to H'\to G'\to\bar{G}'\to1,
	\end{equation*}
	where $\bar{G}'$ is the image of $G'$ under the map $\pi$, and $H'=G'\cap H$ is the kernel of this map.
	In other words $H'$ is a normal subgroup of $G'$ such that $H'$ and $G'/H'$ have coprime orders.
	By Schur-Zassenhaus Theorem, we see that $H'$ admits a complement in $G'$, i.e., there exists some subgroup $\bar{G}''$ such that $H'\bar{G}''=G'$.
	In particular, the restriction $\pi:\bar{G}''\to\bar{G}'$ induces a group isomorphism.
	This implies that $\bar{G}''\subseteq\pi^{-1}(\bar{G}')=H\bar{G}'$ and it is one of the complements of $H$ in $H\bar{G}'$, for
	\begin{equation*}
		H\cap\bar{G}''=\{e_G\}\quad\text{and}\quad\pi^{-1}(\bar{G}')=HG'=HH'\bar{G}''=H\bar{G}''.
	\end{equation*}
	By Schur-Zassenhaus theorem again, we see that the complements of $H$ in $H\bar{G}'$ are all conjugate to each other.
	So if $g\in H\bar{G}'$ is some element such that $g\bar{G}''g^{-1}=\bar{G}'$, then we have that
	\begin{equation*}
		gG'g^{-1}=gH'g^{-1}g\bar{G}''g^{-1}=H'\bar{G}'.
	\end{equation*}
	And we are done.
\end{proof}
Recall that a specification $\Sigma$ at $p$ is a $G$-structured algebra of the form $\operatorname{Ind}^G_{G'}F$ where $G'$ is a subgroup of $G$, and $F$ is a $G'$-extension of $\mathbb{Q}_p$.
Lemma~\ref{lemma: c(Sigma) in general case 1} actually shows that every specification at $p$ comes from two pieces of data:
1) a $\bar{G}'$-extension $K_{\mathfrak{p}}/\mathbb{Q}_p$; 
2) a $\bar{G}'$-morphism $K_{\mathfrak{p}}^*\to H$.
Because for each subgroup $G'\subseteq G$, the $G'$-extensions $F/\mathbb{Q}_p$ arise in this way (up to conjugation).
Let us make it precise by the following statement.
\begin{lemma}\label{lemma: c(Sigma) in general case 2}
    Fix a finite rational prime $p$ and a $\bar{G}$-field $K$.
	If $\mathfrak{p}$ is a prime of $K$ above $p$, and $G_{\mathfrak{p}}\subseteq\bar{G}$ is the decomposition group, then for each $G_{\mathfrak{p}}$-morphism $\chi_{\mathfrak{p}}:K_{\mathfrak{p}}^*\to H$ with image $H'$, the class field $F$ corresponding to $\chi_{\mathfrak{p}}$ is a $G'$-extension over $\mathbb{Q}_p$ with $G'=H'\rtimes G_{\mathfrak{p}}$.
	If $L\in\mathcal{E}$ is a field extension of $K$ that corresponds to the $\bar{G}$-morphism $\rho:\operatorname{C}_K\to H$, then $\rho_{\mathfrak{p}}$ must be $G_{\mathfrak{p}}$-equivariant.
	Moreover if $\rho_{\mathfrak{p}}=\chi_{\mathfrak{p}}$, then $G_{\mathfrak{P}}\cong G'$ and $L_{\mathfrak{P}}\cong F$ as $G_{\mathfrak{P}}$-extensions over $\mathbb{Q}_p$, and they induce the same local specification at $p$, i.e.,
	\begin{equation*}
		L_p\cong\operatorname{Ind}^G_{G_{\mathfrak{P}}}L_{\mathfrak{P}}\cong\operatorname{Ind}^G_{G_{\mathfrak{P}}}F.
	\end{equation*}
\end{lemma}
\begin{proof}
	By Lemma~\ref{lemma: CFT galois action on idele local case}, since the kernel of the surjective map $\chi_{\mathfrak{p}}:K_{\mathfrak{p}}^*\to H'$ is $G_{\mathfrak{p}}$-invariant, the class field $F$ is normal over $\mathbb{Q}_p$ with Galois group $G'$, defined as in the statement.
	
	Now assume that $L\in\mathcal{E}$ is a $G$-field that is also an extension of $K$ corresponding to the Artin reciprocity map $\rho$.
	By definition, we have
	\begin{equation*}
		L_p=L\otimes\mathbb{Q}_p
		=\prod_{\mathfrak{P}\mid p}L_{\mathfrak{P}}
		=\operatorname{Ind}^G_{G_{\mathfrak{P}}}L_{\mathfrak{P}},
	\end{equation*}
	where the last notation indicates the $G$-structure on $L_p$.
	Assume without loss of generality that $\mathfrak{p}=\mathfrak{P}\cap\mathscr{O}_K$, where $\mathscr{O}_K$ is the ring of integers.
	We see that $L_{\mathfrak{P}}/K_{\mathfrak{p}}$ is a finite abelian extension of which the corresponding Artin map is induced by $\rho_{\mathfrak{p}}:K_{\mathfrak{p}}^*\to H$.
	The map $\rho_{\mathfrak{p}}$ must be consistent with the $G_{\mathfrak{p}}$-action.
	Because $\rho_p:K_p^*\to H$ is $G$-equivariant, and the action of $G_{\mathfrak{p}}$ does not permute the place $\mathfrak{p}$, hence inducing an automorphism on $K_{\mathfrak{p}}^*$.
	
	If in addition, $\rho_{\mathfrak{p}}=\chi_{\mathfrak{p}}$, then $G_{\mathfrak{P}}\cong G'$ and $L_{\mathfrak{P}}\cong F$.
	In particular, $G'$ must be conjugate to $G_{\mathfrak{P}}$ by Lemma~\ref{lemma: c(Sigma) in general case 1}.
	In fact, by our choice of $\mathfrak{P}$, we see that $G_{\mathfrak{p}}=G_{\mathfrak{P}}\vert_{K_{\mathfrak{p}}}$, so we can choose a section $s:G_{\mathfrak{p}}\to G$ such that $H's(G_{\mathfrak{p}})=G_{\mathfrak{P}}$.
	And this shows that $L_{\mathfrak{P}}\cong F$ not only as local fields, but also as $G_{\mathfrak{P}}$-extensions over $\mathbb{Q}_p$.
	So they induce the same $G$-structured $\mathbb{Q}_p$-algebra:
	\begin{equation*}
		L_p\cong\operatorname{Ind}^G_{G_{\mathfrak{P}}}L_{\mathfrak{P}}\cong\operatorname{Ind}^G_{G_{\mathfrak{P}}}F.
	\end{equation*}
\end{proof}
In summary, in this section we've established the correspondence
\begin{equation*}
	\operatorname{Sur}_{\bar{G}}(J_K^S/K^S,H)\leftrightarrow\{L\in\mathcal{E}\mid K\subseteq L\}.
\end{equation*}
We can generalize the counting functions in Definition~\ref{def: product of ramified primes} to the $\bar{G}$-morphisms $\chi:J_K^S\to G$.
To be precise, for each $p$, we start with a continuous $\bar{G}$-morphism $\chi_{p}:\prod_{\mathfrak{p}\mid p}U_{\mathfrak{p}}\to H$.
By picking up an image for a prime element (in particular the identity $e_H$ always works), there is always an extension $\tilde{\chi}_{\mathfrak{p}}:K_{\mathfrak{p}}^*\to H$ that respects the action of $G_{\mathfrak{p}}$.
For each $\mathfrak{p}\mid p$, let $\pi_{\mathfrak{p}}$ be a prime element of $K_{\mathfrak{p}}$.
By the formula
\begin{equation*}
	g\tilde{\chi}_{g^{-1}\mathfrak{p}}(\pi_{g^{-1}\mathfrak{p}})=g\tilde{\chi}_p(\pi_{g^{-1}\mathfrak{p}},0,\dots,0):=\tilde{\chi}_{\mathfrak{p}}(g\pi_{g^{-1}\mathfrak{p}}),
\end{equation*} 
we obtain the extension $\tilde{\chi}_p$ of $\chi_p$, which is also a  $\bar{G}$-morphism.
The map $\tilde{\chi}$ induces a specification at $p$ by the formula
\begin{equation*}
	\Sigma_p:=\operatorname{Ind}^G_{G'}F
\end{equation*}
where $F$ is the class field of $\tilde{\chi}_{\mathfrak{p}}$ for some $\mathfrak{p}\mid p$ and $G'\cong\operatorname{im}\tilde{\chi}_{\mathfrak{p}}\rtimes G_{\mathfrak{p}}$ is the Galois group of $F/\mathbb{Q}_p$ viewed as a subgroup of $G$ by choosing a fixed section $s:\bar{G}\to G$.
Since conjugation does not affect the isometry class of $\Sigma_p$ defined above, we see that this construction is well-defined for a fixed extension $\tilde{\chi}$ of $\chi$, i.e., $\Sigma_p$ is independent of the choice of $\mathfrak{p}$ above $p$.
If $C$ is a counting function for $\mathcal{E}$, then we see that
\begin{equation}\label{eqn: counting function for morphisms}
	C(\chi):=\prod_{p}p^{c(\Sigma_p)},
\end{equation}
and this number does not depend on the choice of $\tilde{\chi}$, for all the extensions of the map $\chi$ share the same tame inertia generator.
And one can see that if $p\nmid\lvert G\rvert\infty$ and $I_{\mathfrak{p}}$ is trivial for some $\mathfrak{p}\mid p$ (i.e., $p$ is unramified in $K/\mathbb{Q}$), then the inertia generator $y_p$ could be simply taken as a generator of $\operatorname{im}\chi_{\mathfrak{p}}\subseteq H$.
Then we start proving the estimate of field-counting in the next section.

\subsection{Estimate of field-counting}
In this subsection, if we obtain the estimate
\begin{equation*}
	N_{C}(\mathcal{E}_0,X)\ll X^{1/(m+\varepsilon)}
	\quad\text{and}\quad
	N_{C}(\mathcal{E}_{\gamma},X)\asymp\frac{X^{1/m}}{\log X}(\log\log X)^{\gamma-1},
\end{equation*}
where $\gamma$ is a positive integer,
then Lemma~\ref{lemma of general case} is proved immediately.
For this purpose (estimate of field-counting), let us prove the following results one-by-one.
\begin{lemma}\label{lemma of the lemma}
	\begin{enumerate}
		\item Fix an abelian $\bar{G}$-field $K/\mathbb{Q}$, and define $\mathcal{E}_K:=\{L\in\mathcal{E}\mid K\subseteq L\}$.
		We have
		\begin{equation*}
			N_C(\mathcal{E}_K,X)\leq\lvert\operatorname{Hom}_{\bar{G}}(\operatorname{Cl}_K,H)\rvert\cdot\#\{\chi\in\operatorname{Hom}_{\bar{G}}(J_K^{S_{\infty}}/K^{S_{\infty}}, H)\mid C(\chi)<X\},
		\end{equation*}
		where $S_{\infty}$ is the set of all infinite primes of $K$.
		\item Let $A$ and $B$ be two finite abelian $p$-groups.
		We have
		\begin{equation*}
			\lvert \operatorname{Hom}(A,B)\rvert\leq\lvert A\rvert^{\operatorname{rk}_p B}.
		\end{equation*}
		\item Fix a finite discrete $\bar{G}$-module $A$.
		There exist positive constant $c$ and $N$ depending only on $\bar{G}$ and $A$ such that for each abelian $\bar{G}$-field $K$, we have
		\begin{equation}\label{eqn: lemma of the lemma 1}
			\lvert\operatorname{Hom}(\operatorname{Cl}_K,A)\rvert\leq c\sqrt{d_K}^{N},
		\end{equation}
		where $\sqrt{d_K}$ is the radical of the absolute discriminant (product of ramified primes).
	\end{enumerate}
\end{lemma}
\begin{proof}
	(1):
	Recall that $S_{\infty}$ is the set of infinite primes of $K$.
	For a finite set $S$ of primes including the ones at infinity, we have the notions of $S$-id{\`e}les $J_K^{S}=\prod_{\mathfrak{p}\notin S}U_{\mathfrak{p}}\times\prod_{\mathfrak{p}\in S}K_{\mathfrak{p}}^*$, and $S$-units $K^{S}=K^*\cap J_K^{S_\infty}$ (inside $J_K$).
	In addition, there is a short exact sequence
	\begin{equation*}
		1\to J_K^{S}/K^S\to\operatorname{C}_K\to\operatorname{Cl}_K^S\to1.
	\end{equation*}
	When $S=S_{\infty}$, we have $\operatorname{Cl}_K^{S_\infty}=\operatorname{Cl}_K$.
	By taking the homomorphisms that are $\bar{G}$-equivariant to $H$, we have an exact sequence
	\begin{equation*}
		0\to\operatorname{Hom}_{\bar{G}}(\operatorname{Cl}_K,H)\to\operatorname{Hom}_{\bar{G}}(\operatorname{C}_K,H)\to\operatorname{Hom}_{\bar{G}}(J_K^{S_{\infty}}/K^{S_{\infty}},H).
	\end{equation*}
	This implies first that if $\chi:J_K^{S_{\infty}}/K^{S_{\infty}}\to H$ admits a preimage, then the number of $\tilde{\chi}:\operatorname{C}_K\to H$ such that its restriction is exactly $\chi$ is given by the size of $\operatorname{Hom}_{\bar{G}}(\operatorname{Cl}_K,H)$.
	In particular, this shows that the original proof of Lemma 7.10 is flawed.
	That is, when $\operatorname{Hom}_{\bar{G}}(\operatorname{Cl}_K,H)$ is non-trivial, there must be multiple $G$-fields $L/\mathbb{Q}$ including $K$ as a subfield with exactly the same local specifications.
	
	On the other hand, we have shown that $\mathcal{E}_K$ and $\operatorname{Sur}_{\bar{G}}(\operatorname{C}_K,H)$ corresponds to each other bijectively (see Proposition 7.6).
	And we see in Section 7.1 that the counting function $C$ could be generalized to $\chi:J_K^{S_{\infty}}\to H$ (that factors through $K^{S_{\infty}})$.
	Putting everything together, we have
	\begin{equation*}
		\begin{aligned}
			N_C(\mathcal{E}_K,X)=&\#\{\tilde{\chi}\in\operatorname{Sur}_{\bar{G}}(\operatorname{C}_K,H)\mid C(\tilde{\chi})<X\}
			\\
			\leq&\lvert\operatorname{Hom}_{\bar{G}}(\operatorname{Cl}_K,H)\rvert\cdot\#\{\chi\in\operatorname{Hom}_{\bar{G}}(J_K^{S_{\infty}}/K^{S_{\infty}}, H)\mid C(\chi)<X\}.
		\end{aligned}
	\end{equation*}
	And we are done for (1).
	
	(2):
	For finite cyclic $p$-groups, we see that
	\begin{equation*}
		\operatorname{Hom}(\mathbb{Z}/p^a\mathbb{Z},\mathbb{Z}/p^b\mathbb{Z})\cong\mathbb{Z}/p^{\min\{a,b\}}\mathbb{Z}.
	\end{equation*}
	In general, write $A\cong\prod_{i=1}^{\operatorname{rk}_p A}\mathbb{Z}/p^{a_i}\mathbb{Z}$ and $B\cong\prod_{i=1}^{\operatorname{rk}_p B}\mathbb{Z}/p^{b_i}\mathbb{Z}$, we have
	\begin{equation*}
		\operatorname{Hom}(A,B)\cong\prod_{i=1}^{\operatorname{rk}_p A}\prod_{j=1}^{\operatorname{rk}_p B}\mathbb{Z}/p^{\min\{a_i,b_j\}}\mathbb{Z}.
	\end{equation*}
	This implies that
	\begin{equation*}
		\lvert\operatorname{Hom}(A,B)\rvert\leq\prod_{j=1}^{\operatorname{rk}_p B}\prod_{i=1}^{\operatorname{rk}_p A}p^{a_i}=\lvert A\rvert^{\operatorname{rk}_p B}.
	\end{equation*}
	And we are done for (2).
	
	(3):
	By (2), we have
	\begin{equation*}
		\begin{aligned}
			\lvert\operatorname{Hom}_{\bar{G}}(\operatorname{Cl}_K,A)\rvert\leq&\lvert\operatorname{Hom}(\operatorname{Cl}_K,A)\rvert\\
			=&\prod_{p}\lvert\operatorname{Hom}(\operatorname{Cl}_K[p^\infty],A[p^\infty])\rvert
			\\
			\leq&\prod_p\lvert\operatorname{Cl}_K[p^\infty]\rvert^{\operatorname{rk}_p A}
			\leq\lvert\operatorname{Cl}_K\rvert^{r_A},
		\end{aligned}
	\end{equation*}
	where $r_A=\sup_p\operatorname{rk}_pA$.
	By Minkowski bound of the size of the class group, for each $\varepsilon>0$, there exists a constant $c=c(\lvert \bar{G}\rvert,\varepsilon)$ such that
	\begin{equation*}
		\lvert\operatorname{Cl}_K\rvert^{r_A}\leq c d_K^{\frac{1+\varepsilon}{2}r_A},
	\end{equation*}
	where $d_K$ is the absolute discriminant of $K/\mathbb{Q}$.
	Since the Galois group $\bar{G}$ is fixed, we see that there exists $N=N(\bar{G},\varepsilon,A)$ such that $(\sqrt{d_K})^N>d_K^{\frac{1+\varepsilon}{2}r_A}$, where $\sqrt{d_K}$ means the radical of the absolute discriminant (or just the product of ramified primes), that is, 
	\begin{equation*}
		\lvert\operatorname{Hom}(\operatorname{Cl}_K,A)\rvert\leq c\sqrt{d_K}^{N},
	\end{equation*}
	and we are done for the proof.
\end{proof}
Let us first give an upper bound.
\begin{lemma}[Correction of the original Lemma 7.10]\label{lemma: estimate for upper bound}
	Assume that $\bar{G}$ is abelian.
	\begin{enumerate}
		\item Let
		\begin{equation*}
			m':=\min_{g\notin\Omega\cup\{e_G\}}c_G(g).
		\end{equation*}
		For each $0<\varepsilon<1$, as $X\to\infty$, we have 
		\begin{equation*}
			N_C(\mathcal{E}_0,X)\ll X^{1/(m'-\varepsilon)}.
		\end{equation*}
		\item For each positive integer $\gamma$, as $X\to\infty$, we have
		\begin{equation*}
			N_{C}(\mathcal{E}_{\gamma},X)\ll\frac{X^{1/m}}{\log X}(\log\log X)^{\gamma-1}.
		\end{equation*}		
	\end{enumerate}
\end{lemma}
\begin{proof}
	Let us prove (i) and (ii) together.
	For each abelian $\bar{G}$-field $K$, define
	\begin{equation*}
		\mathcal{E}_{K,\gamma}:=\{L\in\mathcal{E}_{\gamma}\mid K\subseteq L\}.
	\end{equation*}
	For each $p\nmid\lvert G\rvert\infty$, define
	\begin{equation*}
		\Omega_p^+:=\{\Sigma_p\mid y_p\in\Omega\}
		\quad\text{and}\quad
		\Omega_p^-:=\{\Sigma_p\mid y_p\notin\Omega\}
	\end{equation*}
	where $\Sigma_p$ is a specification at $p$ with respect to $G$, and $y_p$ is an inertia generator (up to conjugate).
	To abbreviate the notations, write $I^{+,\mu}:=I^{+,\mu}_{\mathcal{P}\backslash S}$, where $S:=\{p\in\mathcal{P}\mid p\nmid\lvert G\rvert\infty\}$.
	In other words, $I^{+,\mu}$ is the set of square-free positive integers $d$ such that $p\mid d$ only if $p\nmid\lvert G\rvert$.
	For each integer $d\in I^{+,\mu}$, define
	\begin{equation*}
		\Phi(d):=\prod_{p\mid d}\Omega_p^+\times\prod_{p\nmid d\lvert G\rvert\infty}\Omega_p^-.
	\end{equation*}
	Clearly $\Phi(d)$ is determined by the inertia generators, so for each $\bar{G}$-morphism $\chi:\prod_{\mathfrak{p}\mid p}U_{\mathfrak{p}}\to H$, the notations $\chi\in\Omega_p^{\pm}$ are well-defined. 
	And we can define
	\begin{equation*}
		\tilde{\mathcal{E}}_{K,\gamma}:=
		\bigsqcup_{
			\substack{
				d\in I^{+,\mu}\\
				\omega(d)=\gamma
			}
		}\{\chi\in\operatorname{Hom}_{\bar{G}}(J_K^{S_{\infty}},H)\mid\chi\in\Phi(d)\}.
	\end{equation*}
	The relation between $\tilde{\mathcal{E}}_{K,\gamma}$ and $\mathcal{E}_{K,\gamma}$ is induced by Lemma~\ref{lemma of the lemma}(1) in the sense of statistics, that is,
	\begin{equation*}
		N_C(\mathcal{E}_{K,\gamma},X)\leq\lvert\operatorname{Hom}_{\bar{G}}(\operatorname{Cl}_K,H)\rvert N_C(\tilde{\mathcal{E}}_{K,\gamma},X).
	\end{equation*}
	Let us define a generating series for $\tilde{\mathcal{E}}_{K,\gamma}$ in the form of an Euler product
	\begin{equation*}
		\begin{aligned}
			F_{C,\tilde{\mathcal{E}}_{K,\gamma}}(s):=
			&\Bigl(\sum_{\chi\in\operatorname{Hom}_{\bar{G}}(\prod_{\mathfrak{p}\mid\infty}K_{\mathfrak{p}}^*,H)}1\Bigr)
			\prod_{p\mid\lvert G\rvert}\Bigl(
			\sum_{\chi\in\operatorname{Hom}_{\bar{G}}(\prod_{\mathfrak{p}\mid p}K_{\mathfrak{p}}^*,H)}p^{-c(\chi)s}
			\Bigr)
			\\
			\cdot&\sum_{
				\substack{
					d\in I^{+,\mu}\\
					\omega(d)=\gamma
				}
			}
			\prod_{p\mid d}
			\Bigl(\sum_{
				\substack{
					\chi\in\operatorname{Hom}_{\bar{G}}(\prod_{\mathfrak{p}\mid p}U_{\mathfrak{p}},H)\\
					\chi(y_p)\in\Omega
				}			
			}p^{-c(\chi)s}\Bigr),
		\end{aligned}
	\end{equation*}
	where $y_p$ is a generator of the group of roots of unity in $U_{\mathfrak{p}}$.
	Note that the map $c_G:G\to\mathbb{Z}_{\geq0}$ induces a map $c_{\bar{G}}:\bar{G}\to\mathbb{Z}_{\geq0}$.
	Then we obtain a counting function $C'$ for $\bar{G}$-fields with respect to the maps $c_{\bar{G}}$ and $\{c'_p\}_{p\mid\lvert G\rvert}$, where $c'_p$ is trivial (zero map).
	Note that for each prime $p\mid\lvert G\rvert\infty$, and for each $\bar{G}$-map $\chi:\prod_{\mathfrak{p}\mid p}K_{\mathfrak{p}}^*\to H$, the value $c(\chi)$ is exactly the value of the specification at $p$ that corresponds to $\chi$.
	In particular, it takes into account the ramification of $p$ in the extension $K/\mathbb{Q}$.
	So, for each $\bar{G}$-morphism $\chi:J_K^{S_\infty}\to H$, if there is a surjective $\bar{G}$-morphism $\tilde{\chi}:\operatorname{C}_K\to H$ that induces the same map on $J_K^{S_\infty}$, then we have
	\begin{equation*}
		C(L)=C'(K)C(\chi),
	\end{equation*}
	where $L$ is the $G$-field that corresponds to $\tilde{\chi}$.
	We get a generating series as follows:
	\begin{equation*}
		F_{C,\tilde{\mathcal{E}}_{\gamma}}(s):=\sum_{K\in\mathcal{E}(\bar{G})}\lvert\operatorname{Hom}_{\bar{G}}(\operatorname{Cl}_K,H)\rvert C'(K)^{-s}F_{C,\tilde{\mathcal{E}}_{K,\gamma}}(s).
	\end{equation*}
	We want to reduce it to a single Euler product so that we could deduce the analytic properties of the series.
	Let us first apply Lemma~\ref{lemma of the lemma} here.
	By (\ref{eqn: lemma of the lemma 1}), for each abelian $\bar{G}$-field extension $K/\mathbb{Q}$, there exist positive constants $d$ and $N$ depending only on $\bar{G}$ and $H$ such that 
	\begin{equation*}
		\lvert\operatorname{Hom}_{\bar{G}}(\operatorname{Cl}_K,H)\rvert\leq d\sqrt{d_K}^N.
	\end{equation*}
	We may assume without loss of generality that $N$ is an integer, and obtain the following series
	\begin{equation*}
		E_{C,\tilde{\mathcal{E}}_{\gamma}}(s)=\sum_{K\in\mathcal{E}(\bar{G})}d\sqrt{d_K}^NC'(K)^{-s}F_{C,\tilde{\mathcal{E}}_{K,\gamma}}(s).
	\end{equation*}
	Apply Proposition 7.7 here, and we obtain a positive integer $a$ for each $\bar{G}$-field $K$ and for each $p\in\mathcal{P}$ as an upper bound for $\lvert\operatorname{Hom}_{\bar{G}}(\prod_{\mathfrak{p}\mid p}K^*_{\mathfrak{p}},H)\rvert$,
	and $\lvert\operatorname{Hom}_{\bar{G}}(\prod_{\mathfrak{p}\mid p}U_{\mathfrak{p}},H)\rvert$.
	Because
	\begin{equation*}
		\lvert\operatorname{Hom}_{\bar{G}}(\prod_{\mathfrak{p}\mid p}U_{\mathfrak{p}},H)\rvert\leq\lvert\operatorname{Hom}_{\bar{G}}(\prod_{\mathfrak{p}\mid p}K_{\mathfrak{p}}^*,H)\rvert\leq\#\{\text{Specifications at }p\text{ w.r.t }G\}.
	\end{equation*}
	Or equivalently one can see the existence of such a number $a$ by the structure of $K^*_{\mathfrak{p}}$ as a topological group.
	In particular, the number $a$ could be chosen so that it is uniform for all $\bar{G}$-fields $K$.
	Let $M:=\max_{g\in G}c_G(g)$.
	For each non-negative integer $\gamma$, define
	\begin{equation*}
		D_{C,\tilde{\mathcal{E}}_{\gamma}}(s):=
		d\prod_{p\mid\lvert G\rvert\infty}\Bigl(\sum_{\chi:\mathbb{Z}_p^*\to\bar{G}}1+\sum_{\Sigma_p}p^{-c(\Sigma_p)s}\Bigr)
		\prod_{p\nmid\lvert G\rvert\infty}(1+ap^N\sum_{n=m'}^M p^{-ns})
		\sum_{
			\substack{
				d\in I^{+,\mu}\\
				\omega(d)=\gamma
			}
		}
		\prod_{p\mid d}ap^{-ms},
	\end{equation*}
	where $\Sigma_p$ runs over specifications at $p$ with respect to $G$.
	Write
	\begin{equation*}
		\sum_n b_nn^{-s}:=d\prod_{p\mid\lvert G\rvert\infty}\Bigl(\sum_{\chi:\mathbb{Z}_p^*\to\bar{G}}1\Bigr)\prod_{p\nmid\lvert G\rvert\infty}(1+ap^N\sum_{n=m'}^M p^{-ns}).
	\end{equation*}
	Note that the sum $a\sum_{n=m'}^M p^{-ns}$ in the local factor over counts the number of nontrivial elements in $\operatorname{Hom}(\mathbb{Z}_p^*,\bar{G})$ when $p\nmid\lvert G\rvert\infty$, we have
	\begin{equation*}
		\begin{aligned}
			b_n\geq&\sum_{C'(K)=n}d\sqrt{d_K}^N
			\\
			\geq&\sum_{C'(K)=n}\lvert\operatorname{Hom}_{\bar{G}}(\operatorname{Cl}_K,H)\rvert.
		\end{aligned}
	\end{equation*}
	Similarly if we write
	\begin{equation*}
		\sum_n c_nn^{-s}:=
		\prod_{p\mid\lvert G\rvert\infty}\Bigl(\sum_{\Sigma_p}p^{-c(\Sigma_p)s}\Bigr)
		\sum_{
			\substack{
				d\in I^{+,\mu}\\
				\omega(d)=\gamma
			}
		}
		\prod_{p\mid d}ap^{-ms}
		\quad\text{and}\quad
		\sum_n c_{K,n}n^{-s}:=F_{C,\tilde{\mathcal{E}}_{K,\gamma}}(s),
	\end{equation*}
	then for each $K\in\mathcal{E}(\bar{G})$, we have
	\begin{equation*}
		c_n\geq c_{K,n}.
	\end{equation*}
	So, we have obtained a uniform Euler product to describe an upper bound estimate for field-counting as promised.
	Therefore, it suffices to consider the analytic properties of $\sum_n a_nn^{-s}:=D_{C,\tilde{\mathcal{E}}_{\gamma}}(s)$.
	
	First consider the case when $m'-N>m$ and $\gamma>0$.
	The series $D_{C,\tilde{\mathcal{E}}_{\gamma}}(s)$ defines a holomorphic function in the open half-plane $\sigma>1/m$ by comparing with a suitable power of $\zeta(ms)$.
	By Lemma 5.2 and Proposition 5.3, the function $D_{C,\tilde{\mathcal{E}}_{\gamma}}(s)$ could be extended analytically to the line $\sigma=1/m$, and there exist holomorphic functions $f_0,\dots,f_{\gamma}$ at $s=1/m$ such that
	\begin{equation*}
		D_{C,\tilde{\mathcal{E}}_{\gamma}}(s)=\sum_{i=0}^{\gamma}f_i(s)\Bigl(\log\frac{1}{s-1/m}\Bigr)^{i}.
	\end{equation*}
	The Tauberian Theorem~\cite[Appendix II Theorem I]{narkiewicz2014elementary} implies that
	\begin{equation*}
		\sum_{n<X}a_n\sim A\frac{X^{1/m}}{\log X}(\log\log X)^{\gamma-1},
	\end{equation*}
	where $A$ is a positive constant.
	Fix a small constant $0<\varepsilon<1$.
	If $\gamma=0$, or $m'-N\leq m$, let $l$ be a large enough positive integer such that $0<N/l<\varepsilon$.
	Let $C^l(L):=C(L)^l$, we have
	\begin{equation*}
		N_C(\mathcal{E}_{\gamma},X)=N_{C^l}(\mathcal{E}_{\gamma},X^l).
	\end{equation*}
	The method above could be applied to the case when the counting function $C$ is replaced by $C^l$ in the following sense.
	If $\gamma>0$, then by $m'-1\geq m$, we have
	\begin{equation*}
		lm'-N>l(m'-\varepsilon)>l(m'-1)\geq lm.
	\end{equation*}
	So we come back to the first case and
	\begin{equation*}
		N_C(\mathcal{E}_{\gamma},X)=N_{C^l}(\mathcal{E}_{\gamma},X^l)\ll\frac{X^{1/m}}{\log X}(\log\log X)^{\gamma-1}.
	\end{equation*}
	If $\gamma=0$, then the corresponding series $D_{C^l,\tilde{\mathcal{E}}_{\gamma}}(s)$ is of the form
	\begin{equation*}
		D_{C^l,\tilde{\mathcal{E}}_{\gamma}}(s)=d\prod_{p\mid\lvert G\rvert\infty}\Bigl(\sum_{\chi:\mathbb{Z}_p^*\to\bar{G}}1+\sum_{\Sigma_p}p^{-lc(\Sigma_p)s}\Bigr)
		\prod_{p\nmid\lvert G\rvert\infty}(1+ap^N\sum_{n=lm'}^{lM} p^{-ns}).
	\end{equation*}
	The Proposition 5.3 implies that $D_{C^l,\tilde{\mathcal{E}}_{\gamma}}(s)$ defines a holomorphic function in the open half-plane $\sigma>1/(lm'-N)$, and it could be extended analytically to the line $\sigma=1/(lm'-N)$ with a unique pole at $s=1/(lm'-N)$ of order $a$.
	So the Tauberian Theorem~\cite[Appendix II Theorem I]{narkiewicz2014elementary} implies that
	\begin{equation*}
		N_C(\mathcal{E}_{0},X)=N_{C^l}(\mathcal{E}_{0},X^l)\ll X^{1/(m'-N/l)}(\log X)^{a-1}\ll X^{1/(m'-\varepsilon)}.
	\end{equation*}
	And we are done for the proof.
\end{proof}
Then we prove a lower bound for $N_{C}(\mathcal{E}_{\gamma},X)$.
\begin{lemma}\label{lemma: estimate for lower bound}
	Fix a $\bar{G}$-field $K$, and define
	\begin{equation*}
		\mathcal{E}_K:=\{L\in\mathcal{E}\mid K\subseteq L\}.
	\end{equation*}
	For each positive integer $\gamma$, let $\mathcal{E}_{K,\gamma}:=\mathcal{E}_{\gamma}\cap\mathcal{E}_{K}$.
	Then we have
	\begin{equation*}
		N_C(\mathcal{E}_{K,\gamma},X)\gg \frac{X^{1/m}}{\log X}(\log\log X)^{\gamma-1},
	\end{equation*}
	as $X\to\infty$.
\end{lemma}
\begin{proof}
	\emph{Step 1:}
	Let $S$ be a finite set of primes of $K$ such that
	\begin{enumerate}
		\item $S$ is a $\bar{G}$-set, and there exists a surjective $\bar{G}$-morphism $\varphi:\prod_{\mathfrak{p}\in S}K_{\mathfrak{p}}^*\to H$;
		\item  $J_K^S/K^S\cong\operatorname{C}_K$, i.e., $\operatorname{Cl}_K^S$ is trivial;
		\item for each finite prime $\mathfrak{p}\notin S$, $\mathfrak{p}\nmid d_K\lvert H\rvert\infty$, where $d_K$ means the (absolute) discriminant of $K$.
	\end{enumerate}
	Such a set $S$ exists.
	Because we can first take a set $S'$ such that $J_K^{S'}/K^{S'}\cong\operatorname{C}_K$ (see also Neukirch~\cite[VI.1.4]{neukirch2013algebraic}).
	Then replace $S'$ by a larger set of primes if necessary, there must be a surjective $G$-morphism $\varphi:\prod_{\mathfrak{p}\in S'}K_{\mathfrak{p}}^*\to H$.
	So take $S$ to be a $\bar{G}$-set that includes $S'$ and $\{\mathfrak{p}:\mathfrak{p}\mid d_K\lvert H\rvert\infty\}$, then $S$ will satisfy these three conditions simultaneously.
	
	Since $S$ is a $\bar{G}$-set, we say that a rational prime $p\in S$ if for some $\mathfrak{p}\mid p$ we have $\mathfrak{p}\in S$.
	Clearly this definition does not depend on the choice of $\mathfrak{p}$, i.e., $p\in S$ if and only if $\mathfrak{p}\in S$ for each $\mathfrak{p}\mid p$.	
	
	Recall that for each $\bar{G}$-morphism $\chi_p:\prod_{\mathfrak{p}\mid p}U_{\mathfrak{p}}\to H$, we could extend it to some $\bar{G}$-morphism $\tilde{\chi}_p:\prod_{\mathfrak{p}\mid p}K_{\mathfrak{p}}^*\to H$ (the choice is not unique).
	And such a map $\tilde{\chi}_p$ corresponds bijectively to a specification $\Sigma_p$ at $p$ with respect to $G$.
	Define
	\begin{equation*}
		\Omega_p^+:=\{\Sigma_p\mid y_p\in\Omega\}
	\end{equation*}
	to be the set of specifications at $p$ such that its inertia generator is contained in $\Omega$.
	Since the definition of $\Omega_p^+$ only depends on the inertia generators, the notation $\chi_p\in\Omega_p^+$ is well-defined.
	For a $\bar{G}$-morphism $\chi:J_K^S\to H$, we say that $\chi\in\tilde{\mathcal{E}}_{K,\gamma}$ if there are exactly $\gamma$ primes $p\notin S$ such that $\chi_p\in\Omega_p^+$.
	Recall also that if $\chi\in\tilde{\mathcal{E}}_{K,\gamma}$ is surjective and factors through $J_K^S/K^S$, then the corresponding field $L$ is contained in $\mathcal{E}_{K,\gamma}$.
	In this case we simply write $\chi\in\mathcal{E}_{K,\gamma}$.
	
	\emph{Step 2:}
	Then we follow the idea of Wood~\cite{wood2010probabilities} to construct the generating series and so on.
	Using the structure theorem of finite abelian groups, write $H=\sum_{j=1}^l\mathbb{Z}/n_j\mathbb{Z}$, where $n_j\mid n_{j+1}$ for each $j=1,\dots,l-1$.
	Define $\mathcal{A}:=\prod_{j=1}^l K^S/(K^S)^{n_j}$.
	For each continuous homomorphism $\chi:J_K^S\to H$, define $\chi_j:J_K^S\to\mathbb{Z}/n_j\mathbb{Z}$ to be the projection.
	And for each $\epsilon=(\epsilon_1,\dots,\epsilon_l)\in\mathcal{A}$, define
	\begin{equation*}
		\dot{\chi}(\epsilon):=\prod_{j=1}^{l}\zeta_{n_j}^{\chi_j(\epsilon_j)}.
	\end{equation*}
	For the computation of this notation $\dot{\chi}(\epsilon)$, we refer to Wood~\cite[Lemma 2.16 and 2.17]{wood2010probabilities} for details.
    Another result (or a corollary) we cite from this paper is the following.
    Let $\mathfrak{p}\nmid\lvert G\rvert\infty$ be a prime of $K$.
    If $\chi_{\mathfrak{p}}:K_{\mathfrak{p}}^*\to H$ is a continuous map such that the tame inertia generator of the corresponding $H$-structured $K_{\mathfrak{p}}$-algebra is $g\in H$, then Wood~\cite[Lemma 2.17]{wood2010probabilities} implies that there exists some Hecke character $\theta_{\epsilon^g}$ of $K(\zeta_{r_g})$ such that
    \begin{equation*}
    	\theta_{\epsilon^g}(\mathfrak{P})=\frac{\operatorname{Frob}_{\mathfrak{P}}(\epsilon^g)}{\epsilon^g}=\dot{\chi}_{\mathfrak{p}}(\epsilon)
    \end{equation*}
    where $\mathfrak{P}$ is any prime of $K(\zeta_{r_g})$ above $\mathfrak{p}$, and $\epsilon^g$ is defined in Wood~\cite[Lemma 2.17]{wood2010probabilities}.
    Since the value is independent of the choice of $\mathfrak{P}$, by abuse of notation, let us just write it as $\theta_{\epsilon^g}(\mathfrak{p})$.
    But keep in mind that the Hecke character is defined for $K(\zeta_{r_g})$.
    
	\emph{Step 3:}
	Now let $\varphi:\prod_{\mathfrak{p}\in S}K_{\mathfrak{p}}^*\to H$ be a surjective $\bar{G}$-morphism.
	For each $\gamma\in\mathbb{Z}_+$, define
	\begin{equation*}
		F_{C,\mathcal{E}_{K,\gamma}}(\varphi,s):=
		\sum_{
			\substack{
				\chi\in\mathcal{E}_{K,\gamma}\\
				\chi_{\mathfrak{p}}=\varphi_{\mathfrak{p}},\forall\mathfrak{p}\in S
			}
		}C(\chi)^{-s}.
	\end{equation*}
	For simplicity, it suffices to consider the following specifications and the corresponding generating series.
	For each $p\notin S$, define
	\begin{equation*}
		\Omega_p^0:=\{\Sigma_p\mid y_p=e_G\}
	\end{equation*}
	where $\Sigma_p$ is a specification at $p$ with respect to $G$, and $y_p$ is its inertia generator.
	Recall again that for each $\bar{G}$-morphism $\chi_p:\prod_{\mathfrak{p}\mid p}U_{\mathfrak{p}}\to H$, the notations $\chi_p\in\Omega_p^+$ and $\chi_p\in\Omega_p^0$ are well-defined because $\Omega^+_p$ and $\Omega_p^0$ are defined by inertia generators.
	Let
	\begin{equation*}
		\mathcal{P}_{K/\mathbb{Q}}(e_G):=\{p\text{ rational prime}\mid p\notin S\text{ and }p\text{ totally splits in }K/\mathbb{Q}\},
	\end{equation*}
    where $e_G$ means that the Frobenius is trivial.
	By \v{C}ebotarev density Theorem, we know that this set is infinite.
    To abbreviate the notations, write $I^{+,\mu}_{e_G}:=I^{+,\mu}_{\mathcal{P}_{K/\mathbb{Q}}(e_G)}$.
	For each integer $d\in I^{+,\mu}_{e_G}$, define
	\begin{equation*}
		\Phi(d):=\prod_{p\mid d}\Omega_p^+\times\prod_{p\notin S,p\nmid d}\Omega_p^0.
	\end{equation*}
	Define
	\begin{equation*}
		\begin{aligned}
			\mathcal{E}'_{K,\gamma}:=
			&\bigsqcup_{
				\substack{
					d\in I^{+,\mu}_{e_G}\\
					\omega(d)=\gamma
				}
			}
			\{\chi\in\mathcal{E}_{K,\gamma}\mid\chi\in\Phi(d)\}
			\\
			\tilde{\mathcal{E}}'_{K,\gamma}:=
			&\bigsqcup_{
				\substack{
					d\in I^{+,\mu}_{e_G}\\
					\omega(d)=\gamma
				}
			}\{\chi\in\tilde{\mathcal{E}}_{K,\gamma}\mid\chi\in\Phi(d)\}.
		\end{aligned}		
	\end{equation*}
	Clearly $\mathcal{E}'_{K,\gamma}$ is a subset of $\mathcal{E}_{K,\gamma}$.
	We have the generating series
	\begin{equation*}
		F_{C,\mathcal{E}'_{K,\gamma}}(\varphi,s):=
		\sum_{
			\substack{
				\chi\in\mathcal{E}'_{K,\gamma}\\
				\chi_{\mathfrak{p}}=\varphi_{\mathfrak{p}},\forall\mathfrak{p}\in S
			}
		}C(\chi)^{-s}.
	\end{equation*}
	If we write $F_{C,\mathcal{E}_{K,\gamma}}(\varphi,s)=\sum_n a_n n^{-s}$ and $F_{C,\mathcal{E}'_{K,\gamma}}(\varphi,s)=\sum_n b_n n^{-s}$, then $b_n\leq a_n$ for all positive integer $n$.
	For each $\epsilon\in\mathcal{A}$, we have the following:
	\begin{equation*}
		\begin{aligned}
			F_{C,\tilde{\mathcal{E}}'_{K,\gamma}}(\varphi,\epsilon,s):=
			&\sum_{
			    \substack{
				\chi\in\tilde{\mathcal{E}}'_{K,\gamma}\\
				\chi_{\mathfrak{p}}=\varphi_{\mathfrak{p}},\forall\mathfrak{p}\in S
			    }
		    }\dot{\chi}(\epsilon)C(\chi)^{-s}C(\varphi)^{-s}
			\\
                =&C(\varphi)^{-s}\sum_{
				\substack{
					d\in I^{+,\mu}_{e_G}\\
					\omega(d)=\gamma
				}
			}
			\prod_{p\mid d}\Bigl(\sum_{\chi_p\in\Omega_p^+}\dot{\chi}_p(\epsilon)p^{-ms}\Bigr).
		\end{aligned}
	\end{equation*}
    The relation between $F_{C,\mathcal{E}'_{K,\gamma}}(\varphi,s)$ and $F_{C,\tilde{\mathcal{E}}'_{K,\gamma}}(\varphi,\epsilon,s)$ is given by the expression:
    \begin{equation*}
        F_{C,\mathcal{E}'_{K,\gamma}}(\varphi,s)=\frac{1}{\lvert\mathcal{A}\rvert}\sum_{\epsilon\in\mathcal{A}}F_{C,\tilde{\mathcal{E}}'_{K,\gamma}}(\varphi,\epsilon,s).
    \end{equation*}
    That is, a field $L\in\mathcal{E}'_{K,\gamma}$ is represented by a unique surjective map $\chi\in\tilde{\mathcal{E}}'_{K,\gamma}$ that factors through $K^S$.
	For two functions $f(s)$ and $g(s)$ defined in the open half-plane $\sigma>1/m$, we say that $f(s)\approx_m g(s)$ if the ratio $f/g$ admits an analytic continuation to the closed half-plane $\sigma\geq1/m$.
    For $\gamma=1$, we do the computation as follows.
    \begin{equation*}
        \begin{aligned}
            F_{C,\tilde{\mathcal{E}}'_{K,1}}(\varphi,\epsilon,s)
			=&C(\varphi)^{-s}
            \sum_{p\in\mathcal{P}_{K/\mathbb{Q}}(e_G)}
            \Bigl(\sum_{\chi_p\in\Omega_p^+}\dot{\chi}_p(\epsilon)p^{-ms}\Bigr)
			\\
            =&C(\varphi)^{-s}
            \sum_{p\in\mathcal{P}_{K/\mathbb{Q}}(e_G)}p^{-ms}
            \Bigl(
            \sum_{\chi_p\in\Omega_p^+}
            \prod_{
                \substack{
                    \mathfrak{p}\in\mathcal{P}_K\\
                    \mathfrak{p}\mid p
                }
            }
            \dot{\chi}_{\mathfrak{p}}(\epsilon)
            \Bigr)
            \\
            \approx_m&
            \sum_{g\in\Omega}
            \sum_{
                \substack{
                    p\in\mathcal{P}_{K/\mathbb{Q}}(e_G)\\
                    p\equiv1\bmod{r_g}
                }
            }p^{-ms}
            \prod_{x\in\bar{G}}\theta_{\epsilon^{g_{x\mathfrak{p}}}}(x\mathfrak{p}),
        \end{aligned}
    \end{equation*}
    where $\mathfrak{p}$ is some prime of $K$ above $p$, and $g_{x\mathfrak{p}}$ is the inertia generator at $x\mathfrak{p}$.
    We use a trick to express this product in the last row into a single Hecke character.
    For each $p\in\mathcal{P}_{K/\mathbb{Q}}(e_G)$, we know that $K_{\mathfrak{p}}\cong\mathbb{Q}_p$.
    Therefore we may treat the inertia generator $y_{\mathfrak{p}}$ as an element of $K_{\mathfrak{p}}^*$ such that $y_{\mathfrak{p}}\equiv\zeta_{p-1}\bmod{\mathfrak{p}}$, where $\zeta_{p-1}\in(\mathbb{Z}/p\mathbb{Z})^*$ is a fixed generator.
    Note that $x:K_{\mathfrak{p}}\to K_{x\mathfrak{p}}$ is a $\mathbb{Q}_p$-isomorphism.
    We see that $xy_{\mathfrak{p}}$ also satisfies $xy_{\mathfrak{p}}\equiv\zeta_{p-1}\bmod{\mathfrak{p}}$, for $\zeta_{p-1}$ could be represented by an integer which is fixed by any Galois action, that is, $xy_{\mathfrak{p}}=y_{x\mathfrak{p}}$.
    So, the product of the last row also defines a Hecke character of $K(\zeta_{r_g})$ by
    \begin{equation*}
        \psi_{\epsilon^g}:=\prod_{x\in\bar{G}}\theta_{\epsilon^{xg}}\circ\tilde{x},
    \end{equation*}
    where $\tilde{x}$ is a preimage of $x$ in $G(K(\zeta_{r_g})/\mathbb{Q})$.
    Since the choice of $\mathfrak{P}/\mathfrak{p}$ in the extension $K(\zeta_{r_g})/K$ does not affect the value of $\theta_{\epsilon^g}$, we see that this is a well-defined product of Hecke characters, hence still a Hecke character.
    Note also that $p$ totally splits in $\mathbb{Q}(\zeta_{r_g})/\mathbb{Q}$ if and only if $p\equiv1\bmod{r_g}$.
    So, we can write 
    \begin{equation*}
        F_{C,\tilde{\mathcal{E}}'_{K,1}}(\varphi,\epsilon,s)\approx_m
        \sum_{g\in\Omega}
            \sum_{p\in\mathcal{P}_{K(\zeta_{r_g})/\mathbb{Q}}(e)}
            \sum_{
                \substack{
                    \mathfrak{P}\in\mathcal{P}_{K(\zeta_{r_g})}\\
                    \mathfrak{P}\mid p
                }
            }p^{-ms}\psi_{\epsilon^g}(\mathfrak{P}).
    \end{equation*}
    Then let us deal with the condition that $p\in\mathcal{P}_{K/\mathbb{Q}}(e_G)$ so that everything could be rearranged as a combination of $L$-functions.
    Note that an unramified finite prime $p$ splits completely in $K/\mathbb{Q}$ if and only if the Frobenius at $p$ is trivial.
    So, by orthogonality of characters, we have
    \begin{equation*}
        \begin{aligned}
            &\sum_{p\in\mathcal{P}_{K(\zeta_{r_g})/\mathbb{Q}}(e)}p^{-ms}\sum_{\mathfrak{P}\mid p}\psi_{\epsilon^g}(\mathfrak{P})
            \\
            =&\sum_{p\text{ unr}}p^{-ms}\sum_{\mathfrak{P}\mid p}\psi_{\epsilon^g}(\mathfrak{P})
            \Bigl(
            \frac{1}{\lvert C_{\bar{G}}(e_{\bar{G}})\rvert}\sum_{\chi\in\hat{\bar{G}}}\bar{\chi}(\operatorname{Frob}_p)
            \Bigr)
            \Bigl(
            \frac{1}{\phi(r_g)}\sum_{\psi\in\widehat{(\mathbb{Z}/r_g)^*}}\bar{\psi}(p)
            \Bigr)
            \\
            =&\frac{1}{\phi(r_g)\lvert \bar{G}\rvert}
            \sum_{
                \substack{
                    \chi\in\hat{\bar{G}}\\
                    \psi\in\widehat{(\mathbb{Z}/r_g)^*}
                }
            }
            \sum_{p\text{ unr}}
            \bar{\chi}(\operatorname{Frob}_p)\bar{\psi}(p)p^{-ms}\sum_{\mathfrak{P}\mid p}\psi_{\epsilon^g}(\mathfrak{P}).
        \end{aligned}
    \end{equation*}
    Note that $\bar{\chi}(\operatorname{Frob}_p)$ is also a Dirichlet character.
    To be precise, there exists a Dirichlet character $\psi_{\chi}$ such that $\psi_{\chi}(p)=\chi(\operatorname{Frob}_p)$ for all unramified $p$.
    Lift the characters to $K(\zeta_{r_g})$, that is, for each $(\chi,\psi)\in\widehat{\bar{G}}\times\widehat{(\mathbb{Z}/r_g)^*}$, we define a character $\rho_{g,\chi,\psi}$ by 
    \begin{equation*}
        \rho_{g,\chi,\psi}:=(\psi_{\chi}\cdot\psi)\circ\operatorname{Nm}_{K(\zeta_{r_g})/\mathbb{Q}}.
    \end{equation*}
        Recall that a Hecke $L$-series $L(\psi,s)$ admits a zero-free region (Coleman~\cite{Coleman_1990}).
        So, we have 
    \begin{equation*}
      F_{C,\tilde{\mathcal{E}}'_{K,1}}(\varphi,\epsilon,s)
      \approx_m
      \sum_{g\in\Omega}
      \sum_{
      	\substack{
      		\chi\in\hat{\bar{G}}\\
      		\psi\in\widehat{(\mathbb{Z}/r_g)^*}
      		}
        }
        \log L(\rho_{g,\chi,\psi}\cdot\psi_{\epsilon^g},ms).
    \end{equation*}
    The relation $\approx_m$ holds because the comparison of the series is mainly determined by the totally split primes, that is, they have the same coefficient for each $p^{-ms}$.
    Note that $\widehat{\bar{G}}$ and $\widehat{(\mathbb{Z}/r_g)^*}$ admit a trivial character.
    As for $\psi_{\epsilon^g}$, i.e., $\theta_{\epsilon^g}$, if the element $\epsilon^g$ defined as in Wood~\cite[Lemma 2.17]{wood2010probabilities} is contained in $K(\zeta_{r_g})$, then $\theta_{\epsilon^g}$ is trivial.
    And this condition is automatically satisfied when $\epsilon$ is taken to be the identity of $\mathcal{A}$.
    Recall that if a Hecke character $\psi$ is nontrivial, then $L(\psi,ms)$ is holomorphic in the closed half-plane $\sigma\geq1/m$.
    Else if $\psi$ is principal, then $L(\psi,s)$ and $\zeta_{K(\zeta_{r_g})}(s)$ differ by finitely many local factors, and $L(\psi,ms)$ admits a holomorphic continuation to the line $s=1/m$ except at $s=1/m$
    In particular, near the point $s=1/m$, we have
    \begin{equation*}
        \lim_{s\to1}\frac{\log L(\psi,ms)}{\log\bigl(\frac{1}{s-1/m}\bigr)}=1,
    \end{equation*}
    which is exactly the order of the pole of $L(\psi,ms)$ at $s=1/m$.
	This shows that for each $\epsilon\in\mathcal{A}$, the series $F_{C,\tilde{\mathcal{E}}'_{K,1}}(\varphi,\epsilon,s)$ is a holomorphic function in the open half-plane $\sigma>1/m$,
    and it admits a holomorphic continuation to the line $\sigma=1/m$ except at $s=1/m$.
	In addition, there exist holomorphic functions $f_{1,1}(s),f_{1,0}(s)$ at $s=1/m$ such that $f_{1,1}(1/m)$ is positive real and such that
	\begin{equation*}
		F_{C,\mathcal{E}'_{K,1}}(\varphi,s)=f_{1,1}(s)\log\Bigl(\frac{1}{s-1/m}\Bigr)+f_{1,0}(s)
	\end{equation*}
	near $s=1/m$.
        By Tauberian Theorem~\cite[Appendix II Theorem I]{narkiewicz2014elementary} we see that
	\begin{equation*}
		N_C(\mathcal{E}_{K,1},X)\geq N_C(\mathcal{E}'_{K,1},X)\sim b_0\frac{X^{1/m}}{\log X}
	\end{equation*}
	where $b_0$ is a positive real number.
	This proves the statement when $\gamma=1$.
	The case for general $\gamma$ follows from Lemma~\ref{lemma: l(r,s)}, for if we write
	\begin{equation*}
		F_{C,\tilde{\mathcal{E}}'_{K,1}}(\varphi,\epsilon,s)=C(\varphi)^{-s}\sum_{p}b_{p,\epsilon}p^{-ms},
	\end{equation*}
	then
	\begin{equation*}
		F_{C,\tilde{\mathcal{E}}'_{K,\gamma}}(\varphi,\epsilon,s)=
		C(\varphi)^{-s}\sum_{
			\substack{
				d\in I^{+,\mu}_{e_G}\\
				\omega(d)=\gamma
			}
		}
        \Bigl(\prod_{p\mid d}b_{p,\epsilon}p^{-ms}\Bigr).
	\end{equation*}
	So Lemma~\ref{lemma: l(r,s)} implies the analytic continuation of $F_{C,\tilde{\mathcal{E}}'_{K,\gamma}}(\varphi,\epsilon,s)$, hence also of $F_{C,\mathcal{E}'_{K,\gamma}}(\varphi,s)$.
    In particular, the case when $\epsilon=e_{\mathcal{A}}$ shows that $F_{C,\mathcal{E}'_{K,\gamma}}(\varphi,s)$ could not be extended analytically to $s=1/m$, and gives the desired analytic behaviour near $s=1/m$.
    To be precise, there exist holomorphic functions $f_{\gamma,0},f_{\gamma,1},\dots,f_{\gamma,\gamma}$ at $s=1/m$ such that $f_{\gamma,\gamma}(1/m)$ is positive real and that
    \begin{equation*}
        F_{C,\tilde{\mathcal{E}}'_{K,\gamma}}(\varphi,s)=\sum_{j=0}^{\gamma}f_{\gamma,i}(s)\log\Bigl(\frac{1}{s-1/m}\Bigr)^{j}.
    \end{equation*}
	Then Tauberian Theorem~\cite[Appendix II Theorem I]{narkiewicz2014elementary} gives the desired estimate similar to the case when $\gamma=1$:
	\begin{equation*}
		N_C(\mathcal{E}_{K,\gamma},X)\geq N_C(\mathcal{E}'_{K,\gamma},X)\sim b_{\gamma}\frac{X^{1/m}}{\log X}(\log\log X)^{\gamma-1},
	\end{equation*}
    where $b_{\gamma}$ is a positive real number.
\end{proof}
We are finished with the proof of all the main statements of this section.
In summary, Lemma~\ref{lemma: estimate for upper bound} shows an upper bound of $N_C(\mathcal{E}_{\gamma},X)$.
while Lemma~\ref{lemma: estimate for lower bound} shows an lower bound by
\begin{equation*}
    N_C(\mathcal{E}_{\gamma},C)\geq N_C(\mathcal{E}_{K,\gamma},X).
\end{equation*}
So, we've proven that the Hypothesis~\ref{hypothesis: comparison between field counting results}(1) is true for $((\mathcal{E},C),\Omega,\mathcal{P})$.
To be precise, for each non-negative integer $\gamma$ we have
\begin{equation*}
    N_C(\mathcal{E}_{\gamma},X)=o(N_C(\mathcal{E}_{\gamma+1},X)).
\end{equation*}

\subsection{Examples}
In this section let us discuss some applications of the main results of this section via examples, including the proof of Theorem~\ref{thm: S1 dist of class group sextic case} and~\ref{thm: S1 dist of class group cubic case}.
\begin{example}[Sextic fields with $A_4$-closure]
	This is the prototype of Section~\ref{sec: general case} and even the whole paper.
	Let $A_4\subseteq S_4$ be the alternating group of order $4$.
	Consider the map $f:A_4\hookrightarrow S_6$ induced by $(12)(34)\mapsto(1)(2)(34)(56)$, and $(123)\mapsto(135)(246)$, and let $G$ be the image of $A_4$.
	It is a transitive permutation group in $S_6$.
	The stabilizer $\operatorname{Stab}(1)$ in $G$ of $1$ is a cyclic group of order $2$, generated by the image of $(12)(34)$.
	Let $\mathcal{E}:=\mathcal{E}(G)$ be the set of sextic fields with $A_4$-closure.
	Clearly $G$ admits the following short exact sequence
	\begin{equation*}
		1\to V_4\to G\to C_3\to1,
	\end{equation*}
	where $C_3$ acts on $V_4$ by permuting the generators.
	Define
        \begin{equation*}
            \Omega:=\{g\in G\mid r_g=2\},
        \end{equation*}
    which is equivalent to the image of $V_4\backslash\{e_G\}$ in $G$.
	For any non-negative integer $\gamma$, write $\mathcal{E}_{\gamma}:=\mathcal{E}_{\Omega,\gamma}^{\mathcal{P}}$.
	In other words, $L\in\mathcal{E}_{\gamma}$ means that there are exactly $\gamma$ primes $p\nmid6$ such that its inertia generator is contained in $\Omega$.
	
	Though it seems that we cannot apply the results of this section to it directly.
	Note that every field $L\in\mathcal{E}$ admits a Galois closure $M$ which is a Galois $A_4$-field.
    The study of $\mathcal{E}$ and the study of $A_4$-fields are essentially the same once we give the definition of the counting function $C$.
	The (absolute) discriminant of the sextic field $L\in\mathcal{E}$ could be written as
	\begin{equation*}
		d_L=d_K^2\operatorname{Nm}_{K/\mathbb{Q}}(\mathfrak{d}_{L/K})
	\end{equation*}
	where $K$ is the Galois cubic subfield of $L$, and $\mathfrak{d}_{L/K}$ is the relative discriminant.
        
    Let $p\geq5$ be a rational prime that is ramified in $L/\mathbb{Q}$.
    Its inertia subgroup $I_p$ is cyclic, so it is either isomorphic to $C_2$ or $C_3$.
    If $I_p\cong C_3$, then the normalizer of $I_p$ is itself.
    So, $I_p=G_p$, and $L/K$ is unramified.
    This implies that $p^2\| d_K$ and $p\nmid\operatorname{Nm}_{K/\mathbb{Q}}(\mathfrak{d}_{L/K})$.
        
    If $I_p\cong C_2$, then the normalizer of $I_p$ is isomorphic to $V_4$.
    However, whether we have $G_p\cong V_4$ or $G_p=I_p$, the prime $p\mathscr{O}_K=\mathfrak{p}_1\mathfrak{p}_2\mathfrak{p}_3$ must split in $K/\mathbb{Q}$.
    Because of the action of $C_3$ on $V_4$, there are exactly $2$ of the primes, say $\mathfrak{p}_1$ and $\mathfrak{p}_2$, ramified in $L/K$.
    Therefore, $p\nmid d_K$ and $p^2\|\operatorname{Nm}_{K/\mathbb{Q}}(\mathfrak{d}_{L/K})$.

    This shows that $d_L$ could be realized as a counting function.
    To be precise, let $\mathcal{F}$ be the set of Galois $A_4$-fields, and define $\mathcal{F}_{\gamma}:=\mathcal{F}_{\Omega,\gamma}^{\mathcal{P}}$.
    The map $c_G:G\to\mathbb{Z}_{\geq0}$ is defined by $c_G(g)=2$ for $g\in\Omega$ and $c_G(g)=4$ for $g\in G\backslash(\Omega\cup\{e_G\})$.
    This induces a counting function $C$ for $\mathcal{F}$.
    And for each $M\in\mathcal{F}$ with a subfield $L\in\mathcal{E}$, we see that $C(M)=d_L$ up to wildly ramified primes.    
	So, we have a nontrivial example that satisfies the set-up of the main results of this section, and Theorem~\ref{thm: S1 dist of class group sextic case} is proved immediately.
	Note that Definition~\ref{def:set of fields} defines the set of fields up to isomorphism classes instead of fields in a fixed algebraic closure.
    So, by Lemma~\ref{lemma of general case}, for each $0<\varepsilon<1$, we have
        \begin{equation*}
            N_d(\mathcal{E}_0,X)=N_C(\mathcal{F}_0,X)\ll X^{1/(3-\varepsilon)}.
        \end{equation*}
    For each positive integer $\gamma$, we have
        \begin{equation*}
            N_d(\mathcal{E}_{\gamma},X)=N_C(\mathcal{F}_{\gamma},X)\asymp\frac{X^{1/2}}{\log X}(\log\log X)^{\gamma-1}.
        \end{equation*}
    This implies that Hypothesis~\ref{hypothesis: comparison between field counting results}(1) is true for $((\mathcal{E},d),\Omega,\mathcal{P})$.
    So, Theorem~\ref{thm: estimate of class group} and~\ref{thm: 0-prob and infty-moment} imply that Theorem~\ref{thm: S1 dist of class group sextic case} is true.
\end{example}
Finally, let us apply the method of this section to the non-Galois cubic fields, i.e., the proof of Theorem~\ref{thm: S1 dist of class group cubic case}.
\begin{example}[Cubic fields]
	Let $G=S_3$, and $\mathcal{E}:=\mathcal{E}(G)$ be the set of non-Galois cubic fields.
	The group $G$ admits the short exact sequence
	\begin{equation*}
		1\to C_3\to G\to C_2\to1
	\end{equation*}
	where $C_2$ acts on $C_3$ in a nontrivial way.
	Let $\Omega:=C_3\backslash\{e_G\}$.
	For each non-negative integer $\gamma$, write $\mathcal{E}_{\gamma}:=\mathcal{E}_{\Omega,\gamma}^{\mathcal{P}}$.
	In other words, a cubic field $K_3\in\mathcal{E}_{\gamma}$ if there are exactly $\gamma$ primes $p\nmid6$ such that its tame inertia generator is contained in $\Omega$.
    The condition simply means that $p$ is totally ramified in $K_3$.
	The counting functions discriminant and product of ramified primes do not satisfy our conditions.
	So we need to define a special one so that the method of this section works.
	
	For each non-Galois cubic field $K_3$, let $K_6$ be its Galois closure and $K_2:=K_6^{C_3}$ be the associated quadratic number field.
	Note that if we write $d_{K_3}=d_{K_2}f^2$, where $p\mid f$ implies that $p$ is totally ramified in $K_3$.
	Or equivalently $f^4=\operatorname{Nm}_{K_2/\mathbb{Q}}(\mathfrak{d}_{K_6/K_2})$ where $\mathfrak{d}$ means the relative discriminant.
	Define
	\begin{equation*}
		C(K_3):= d_{K_3}d_{K_2}^2= d_{K_2}^3f^2.
	\end{equation*}
	One can check that this counting function could also be realized by Definition~\ref{def: product of ramified primes} in the following sense.
	Let $c_G:G\to\mathbb{Z}$ be the map induced by $c_G((12))=3$ and $c_G((123))=2$.
	For $p\mid6$, let $c_p$ be a $\mathbb{Z}_{\geq0}$-valued map on the set of specifications at $p$.
	Then we obtain a counting function $C'$ for $S_3$-fields induced by $c_G$ and $c_2,c_3$.
	Since $K_3=K_6^{\langle(23)\rangle}$, the function $C'$ induces a counting function for $K_3\in\mathcal{E}$, still denoted by $C'$.
	In particular whenever $K_3$ is tamely ramified, $C'(K_3)=C(K_3)$.
	In other words, they differ only at $2$ and $3$.
	
	Let $\mathcal{F}$ be the set of Galois $S_3$-fields.
        Now we can apply Lemma~\ref{lemma of general case} and obtain for each positive integer $\gamma$ the following estimate
	\begin{equation*}
		N_C(\mathcal{E}_{\gamma},X)=N_C(\mathcal{F}_{\gamma},X)\asymp\frac{X^{1/2}}{\log X}(\log\log X)^{\gamma-1}.
	\end{equation*}
        Also, the work of Davenport and Heilbronn~\cite{Davenport1971Cubic} implies that there exists some real number $c_0>0$ such that
        \begin{equation*}
            N_C(\mathcal{E}_0,X)\sim c_0X^{1/3},
        \end{equation*}
        for there is no totally ramified primes and for each $K\in\mathcal{E}_0$ we have $C(K)=d_K^3$, where $d$ is the usual discriminant.
        This also shows that in general the lower bound estimate from Lemma~\ref{lemma of general case} could not be improved in the sense of the main term, unless we have more conditions on the counting function $C$.
	By Theorem~\ref{thm: estimate of class group}, there exists some constant $c=c(6)$, for each $K_3\in\mathcal{E}$, we have
	\begin{equation*}
		\operatorname{rk}_3\operatorname{Cl}_{K_3}\geq\#\{p\in\mathcal{P}\mid p\text{ is totally ramified}\}-c=\#\mathcal{P}_{\mathbb{Q}}(\Omega,K_3)-c.
	\end{equation*}
	By Theorem~\ref{thm: 0-prob and infty-moment}, we obtain the following:
	for each non-negative integer $r$, we have that
	\begin{equation*}
		\mathbb{P}_C(\operatorname{rk}_3\operatorname{Cl}_{K_3}\leq r)=0,
	\end{equation*}
	and 
	\begin{equation*}
		\mathbb{E}_C(\lvert\operatorname{Hom}(\operatorname{Cl}_{K_3},C_3)\rvert)=+\infty.
	\end{equation*}
	Moreover, we can see that the statement remains true for all counting functions
	\begin{equation*}
		C(K_3):=d_{K_3}d_{K_2}^n,
	\end{equation*}
	where the integer $n\geq2$.
	This proves Theorem~\ref{thm: S1 dist of class group cubic case}.	
\end{example}

\section*{Acknowledgement}
The author would like to thank Yuan Liu for many useful conversations, 
and thank Melanie Wood and an anonymous referee for many helpful comments，
The author is supported by Jiangsu Funding Program for Excellent Postdoctoral Talent 2023ZB064.

\section*{Conflict of Interest Statement}

The author declares that there is no potential conflict.

\section*{Data Availability Statement}
The author claims that the data supporting the study of this paper are available within the article.

\bibliographystyle{plain}
\bibliography{references}
\end{document}